\def\BState{\State\hskip-\ALG@thistlm}
\newcommand{\yn}{{\bf y}_n}
\newcommand{\bfx}{{\bf x}}
\newcommand{\pg}{p_{\bgm}}
\newcommand{\bgm}{\boldsymbol{\gamma}}
\newcommand{\bbta}{\boldsymbol{\beta}}
\newcommand{\Rg}{R_{\bgm}}
\newcommand{\ad}{\alpha_\delta} 
\newcommand{\bta}{\boldsymbol{\theta}}
\newtheorem{lemma}{Lemma}
\newtheorem{remark}{Remark}
\newtheorem{theorem}{Theorem}
\newtheorem{result}{Result}
\title{\bf Targeted Random Projection for Prediction from High-Dimensional Features}
\author{Minerva Mukhopadhyay \and David B. Dunson}
\date{}
\begin{document}

\maketitle
\begin{abstract}
We consider the problem of computationally-efficient prediction with high dimensional and highly correlated predictors when accurate variable selection is effectively impossible. Direct application of penalization or Bayesian methods implemented with Markov chain Monte Carlo can be computationally daunting and unstable. 
A common solution is first stage dimension reduction through screening or projecting the design matrix to a lower dimensional hyper-plane.
Screening is highly sensitive to threshold choice, while projections often have poor performance in very high-dimensions. We propose TArgeted Random Projection (TARP) to combine positive aspects of both strategies. TARP uses screening to order the inclusion probabilities of the features in the projection matrix used for dimension reduction, leading to data-informed sparsity. We provide theoretical support for a Bayesian predictive algorithm based on TARP, including statistical and computational complexity guarantees. Examples for simulated and real data applications illustrate gains relative to a variety of competitors. 
\end{abstract}
\par
\noindent
{\bf Some key words:} Bayesian; Dimension reduction; Large $p$ small $n$;  Screening.
\par
\noindent
 {\bf Short title}: Targeted Random Projection 

\section{Introduction}\label{sec:1}

We are interested in the problem of prediction of a response variable $y$ given enormous dimensional predictors ${\bf x} = (x_1,\ldots,x_p)^{\prime}$ in the case in which $p \gg n$, and the predictors have a potentially high level of dependence.  There is an immense literature on methods for accommodating large $p$ modest $n$ regression, ranging from frequentist penalization methods to Bayesian approaches using variable selection or shrinkage priors.  

Such penalization methods typically define an objective function for estimating regression coefficients $\bbta = (\beta_1,\ldots,\beta_p)^{\prime}$ in a linear or generalized linear model. Some notable developments in this literature include (1) the least absolute shrinkage and selection operator (LASSO) using an L1 penalty and defining a convex optimization algorithm \citep{lasso,tibshirani1997}, (2) the smoothly clipped absolute deviation (SCAD) penalty \citep{SCAD} which improves on Lasso and hard thresholding and leads to a non-convex optimization problem, and (3) the minimax concave penalty (MCP) forming a key component of the MC+ procedure for nearly unbiased high-dimensional variable selection \citep{MCP}. SCAD and MCP are special cases of folded concave penalization, which can have multiple local solutions.  \cite{FXZ2014} propose a one step local linear approximation to deal with this problem and produce an oracle estimator.  \cite{LWM2015} provide theory showing that all stationary points of a broad class of regularized M-estimators (including SCAD and MCP) are close together under some conditions.  A key issue in these and other penalization algorithms is the choice of the penalty parameter, as carefully studied in \cite{WKL2013} and \cite{ZLT2010}.

There is a parallel Bayesian literature defining a broad class of priors for the high-dimensional $\bbta$ vector, along with corresponding algorithms for posterior computation.  The two main classes of priors are spike and slab priors, which have a two component mixture form with one component concentrated at zero (see \cite{MB1988}, \cite{SplikeSlab}), and continuous shrinkage priors, which define the prior probability density to be concentrated at zero with heavy tails.  The concentration at zero shrinks small coefficients towards zero and the heavy tails avoids over shrinkage of large coefficients.  A broad class of such shrinkage priors can be expressed as local-global scale mixtures of Gaussians, with some popular examples including the horseshoe (\cite{CPS2009}), generalized double Pareto (\cite{ADL2013}) and Dirichlet-Laplace (\cite{BPPD2015}).  These Bayesian approaches are typically implemented with Markov chain Monte Carlo (MCMC) algorithms, which can face issues in scaling up to very large $p$ cases, though \cite{YWJ2016} provide some positive theory results suggesting good scalability in some cases.

For time-efficient implementation of most of these methods, some preliminary dimensionality reduction tool is usually employed. Two of the most common strategies are variable screening, focusing on the predictors $x_j$ having the highest marginal associations with $y$, and projection, compressing the $n\times p$ design matrix $X$ by post-multiplying an appropriate matrix; for example, principal component (PC) projection or random projection (RP). Screening is particularly popular due to the combination of conceptual simplicity, computational scalability, and theoretical support. For example,  
sure independence screening (SIS, \cite{ISIS}) can be guaranteed {\em asymptotically} to select a superset of the `true' predictors \citep{fan2009}.

However, when the predictors are highly correlated and/or the true data generating process does not exhibit strong sparsity with a high signal-to-noise ratio, it may be necessary to use a very conservative threshold for the measure of marginal association, limiting the dimensionality reduction occurring in the first stage. 
Projection based methods instead compress the short, fat matrix $X$ into a thinner matrix having fewer columns while maintaining much of the information in $X$ relevant to prediction of $y$. Properties of PC projection are well studied in the literature (see, for example, \cite{LZW2010}). There is a rich literature providing accuracy bounds of RPs in terms of predictive errors. Refer, for example, to
\cite{maillard2009}, \cite{fard2012}, \cite{kaban2014}, \cite{thanei2017}, \cite{GD_2015}, \cite{koop2016}. \cite{GD_2015} concentrate on approximating predictive distributions in Bayesian regression with compressed predictors. The above literature on RPs focuses primarily on random matrices with i.i.d elements.

When predictors are very high-dimensional, existing projection based methods can fail as they tend to include many unimportant predictors in each linear combination, diluting the signal. Potentially, one can attempt to improve performance by estimating the projection matrix, but this results in a daunting computational and statistical problem. Alternatively, we propose a TArgeted Random Projection (TARP) approach, which includes predictors in the projection matrix with probability proportional to their marginal utilities. These utilities are estimated quickly as a preprocessing step using an independent screening-type approach. To reduce sensitivity of the results to tuning parameters and Monte Carlo error, we take the approach of aggregating over multiple realizations. The proposed randomized screening method can be viewed as a type of rapid preconditioning, enabling improved predictive performance in high-dimensional settings. Compared with applying PC projections or RPs, after screening out predictors, TARP has the advantage of removing sensitivity to threshold choice by using a soft probabilistic approach.

\section{The Proposed Method} \label{sec:2}
Let $\mathcal{D}^n=\left\{(\yn; X_n): \yn \in \mathbb{R}^n, X_n \in \mathbb{R}^{n\times p_n} \right\}$ denote the dataset consisting of $n$ observations on $p_n$ predictors $x_1, x_2, \ldots, x_{p_n}$ and a response $y$, and $(y_i; {\bf x}_i )$ denote the $i^{th}$ data point, $i=1,2, \ldots, n$. Suppose that the data can be characterized by a generalized linear model (GLM). The density of $y$ is related to the predictors as 
\begin{equation}
f(y_i| \bbta, \sigma^2)=\exp\left[\frac{1}{d(\sigma^2)} \left\{ y_i a({\bf x}_i^{\prime} \bbta) + b({\bf x}_i^{\prime} \bbta) +c(y_i) \right\} \right], \label{eq_rp1}
\end{equation}
where $a(\cdot)$ and $b(\cdot)$ are continuously differentiable functions, $a(\cdot)$ has non-zero derivative, $d(\cdot)$ is a non-zero function, $\bbta\in \mathbb{R}^{p_n}$, and $\sigma^2\in \mathbb{R}^{+}$. We approximate the density of $y$ in a compressed regression framework as follows: 
\[ f(y_i | \bta, R_n, \sigma^2 )=\exp \left[ \frac{1}{d(\sigma^2)} \left\{ y_i a \left((R_n {\bf x}_i)^{\prime} \bta \right) + b\left((R_n {\bf x}_i)^{\prime} \bta\right) +c(y_i) \right\}\right].\]
Here $R_n \in \mathbb{R}^{m_n \times p_n}$ is a projection matrix, $\bta\in \mathbb{R}^{m_n}$ is vector of compressed regression coefficients, and $m_n \ll p_n$. We discuss the choice of the projection matrix $R_n$ in Section \ref{sec:2.1}, and illustrate the method in detail in Section \ref{sec:2.2}. 

{\it Priors.}  We assume that the covariates are standardized. Taking a Bayesian approach, we assign priors to $\bta$ and $\sigma^2$:
$N({\bf 0}, \sigma^2 I)$ and 
\emph{Inv-Gamma ($a_{\sigma}, b_{\sigma}$)}, with $a_{\sigma}, b_{\sigma}>0$. This Normal-Inverse Gamma (NIG) prior is a common choice for GLMs. 
For Gaussian likelihoods, this prior is conjugate, and the posterior and predictive distributions are analytic.

\subsection{Choice of projection matrix}\label{sec:2.1} 
The projection matrix $R_n$ embeds $X_n$ to a lower dimensional subspace.  If $p_n$ is not large, the best linear embedding can be estimated using a singular value decomposition (SVD) of $X_n$.  
However, if $p_n \gg n$, then it is problematic to estimate the projection, both computationally and statistically, and random projection (RP) provides a practical alternative. If an appropriate RP matrix is chosen, due to Johnson-Lindenstrauss (JL) type embedding results, distances between sample points are preserved (see \cite{dasgupta2003}, \cite{Achlioptas_2003}). 

Our focus is on constructing projection matrices so that predictors $x_j$ having relatively weak marginal relationships with $y$ are less likely to be included in downstream analyses. In particular, TArgeted Random Projection (TARP) matrices are constructed as follows: 
\begin{eqnarray}
\boldsymbol{ \gamma}&=&\left( \gamma_1, \gamma_2, \ldots, \gamma_{p_n} \right)^{\prime}\quad \mbox{and}\quad \gamma_j
\stackrel{i.i.d.}{\sim} ~\mbox{Bernoulli} \left(q_j \right)  \mbox{~~~where }\notag\\
&&\hskip17pt q_j \propto |r_{x_j,y}|^\delta~~~~ \mbox{for some constant~~ $\delta>0$, ~}j=1,2,\ldots,p_n,  \label{eq_rp2} \\
R_{\overline{\bgm}} &=& O_{m_n\times (p_n - p_{\bgm})} \quad \mbox{and }~~~\Rg = R_n^{*}, \notag
 \end{eqnarray}
where $r_{x_j,y}$ is a marginal dependence measure of $x_j$ and $y$, 
$\Rg$ and $R_{\overline{\bgm}}$ are sub-matrices of $R_n$ with columns corresponding to non-zero and zero values of $\bgm$, $O_{m_n\times (p_n - p_{\bgm})}$ is the $m_n\times (p_n - p_{\bgm})$ matrix of zeros with $p_{\bgm}=\sum_j \gamma_j$, and $R_n^{*}$ is a $m_n\times p_{\bgm}$ projection matrix.

We obtain a Randomized Independence Screening (RIS) approach by choosing a random subset of predictors using a function of marginal utilities, ${\bf q}=(q_1,q_2,\ldots,q_{p_n})^{\prime}$, as inclusion probabilities. 
The selected subset is then projected to a lower dimensional subspace using $R_n^{*}$. RIS can be considered as a practical alternative to SIS for prediction problems with ultrahigh-dimensional nearly collinear predictors, and can be applied as a preprocessing step to any prediction algorithm. In Supplementary Materials (SM, Section \ref{sm:1}) we discuss applicability of RIS in detail. 

There are many possible choices of $R_n^{*}$ which can reduce dimension without sacrificing prediction accuracy.  Two predominant classes that we focus on are based on partial SVD and random projections facilitating JL type embedding. 

\vskip5pt
\noindent{\bf Random projection.}
For some constant $\psi \in (0,0.5)$, each element $R_{k,j}^{*}$ of $R_n^{*}$  is sampled independently from a three point distribution as 
\begin{equation}
   R_{k,j}^{*} = \pm \frac{1}{\sqrt{2\psi}} \quad \mbox{with probability}~~ \psi; \quad\mbox{and} \quad   R_{k,j}^{*} =0 \quad \mbox{with probability}~~ 1-2\psi. \label{eq_rp18}
\end{equation}
Projection matrices of this form are widely used due to their inter point distance preservation property. 
We refer to the method that
generates $R_n^*$ in  (\ref{eq_rp2}) from (\ref{eq_rp18}) as RIS-RP.
\begin{remark}\label{rm3}
The choice of projection matrix in (\ref{eq_rp18}) can be replaced by a wide variety of matrices having i.i.d. components with mean zero and finite fourth moments. One of the sparsest choices is of the form $R_{k,j}^{*}=\pm n^{\kappa/2}/\sqrt{m_n}$ with probability $1/2n^{\kappa}$, $0$ with probability $(1-1/n^{\kappa})$,
where $m_n\sim n^\kappa$, $\kappa\in(0,1)$ (see \cite{LHC_2006}). 
This choice is useful in compressing extremely large dimensional data. Our theoretical results extend to this case.
\end{remark}

\vskip5pt
\noindent{\bf Principal component projection.}
Let $X_{\bgm}$ be the sub-matrix of $X_n$ with columns corresponding to non-zero values of $\bgm$. Consider the partial spectral decomposition of $X_{\bgm}$ as $X_{\bgm}^{\prime} X_{\bgm} = V^{\prime}_{\bgm, m_n} D_{\bgm, m_n} V_{\bgm, m_n}$, and let
 \vspace{-.12 in}
\begin{eqnarray}
R_n^{*} = V_{\bgm,m_n}^{\prime}. \label{eq_rp16}
\end{eqnarray}
We refer to this method as RIS-PCR (RIS-Principal Component Regression).

\vskip10pt
The performance of TARP depends on tuning parameters $m_n$, $\delta$ and $\psi$. 
To limit dependence of the results on tuning parameters and random variation in the projection matrix, we generate multiple realizations of the matrix for different tuning parameters, and aggregate these results. Potentially, one could estimate weights for aggregation using Bayesian methods (see \cite{hoeting1999}) or other ensemble learning approaches, but we focus on simple averaging due to its computational and conceptual simplicity. Refer to SM Section \ref{sm:3.2} for a comparison of simple aggregation with other learning approaches.

\begin{remark}\label{rm:review1}
The marginal utility measure $r$ in (\ref{eq_rp2}) can be the correlation coefficient for normal linear models. For GLMs, there are many suitable choices for $r$ (see, for example, \cite{ZA_2000}, \cite{fan2009}, \cite{fan2010}, \cite{KS2017}). These include the maximum marginal likelihood estimator (MMLE), maximized marginal likelihood (MML), regression correlation coefficient (RCC), etc. TARP is not sensitive to the choice of $r$ as it considers a wide list of predictors randomly, and relies on aggregation. We discuss this in SM Section \ref{sm:2p1}.
\end{remark}

\subsection{Posteriors and Predictive Distribution}\label{sec:2.2}
We illustrate TARP in normal linear regression. We replace the normal linear model $ y_i ={\bf x}_i^{\prime}\bbta+e_i$ by $ y_i =\left(R_n {\bf x}_i\right)^{\prime}\bta+e_i$, where $e_i\sim N(0, \sigma^2)$. Given the NIG prior stated above, the marginal posterior of $\bta$, given $R_n$,
follows a scaled $m_n$-variate $t$ distribution with degrees of freedom $n+2 a_\sigma$, location $\boldsymbol{\mu}_t$, and scale matrix $\Sigma_t$, where $\boldsymbol{\mu}_t=W_n Z_n^{\prime} \yn$, $\Sigma_t=(\yn^{\prime}\yn - \boldsymbol{\mu}_t^{\prime} W_n^{-1} \boldsymbol{\mu}_t+2 b_{\sigma})W_n/(n+2a_\sigma)$, with $Z_n= X_n R_n^{\prime}$ and $W_n= (I+Z_n^{\prime} Z_n )^{-1}$. The marginal posterior of $\sigma^2$, given $R_n$,
is inverse gamma with parameters $a_\sigma+ n/2$ and $(\yn^{\prime}\yn - \boldsymbol{\mu}_t^{\prime} W_n^{-1} \boldsymbol{\mu}_t)/2+b_{\sigma}$.

Consider prediction of $y$ for $n_{new}$ new data points, $X_{new}$, given the dataset $\mathcal{D}^n$. The predicted values ${\bf y}_{new}$ can be obtained using the Bayes estimator of $\bta$ under squared error loss as~
$ \hat{\bf y}_{new} = X_{new} R_n^{\prime} \hat{\bta}_{Bayes} ~~\mbox{where}~~ \hat{\bta}_{Bayes} =\boldsymbol{\mu}_t.$
Moreover, the posterior predictive distribution of ${\bf y}_{new}$ is
a $n_{new}$-variate $t$ distribution with degrees of freedom $n+2 a_{\sigma}$, location vector $\hat{{\bf y}}_{new}$ and scale parameter $(\yn^{\prime}\yn - \boldsymbol{\mu}_t^{\prime} W_n^{-1} \boldsymbol{\mu}_t+2 b_{\sigma})(I+X_{new}W_nX_{new}^{\prime})/(n+2a_\sigma)$.

For non-Gaussian likelihoods, predictive distributions can be approximated using 
Laplace's method \citep{tierney1986} or MCMC, as we discuss in SM Section \ref{sm:2p1}.

\subsection{Choices of Tuning Parameters }\label{sec:3.1}
\noindent {\bf Choice of $m_n$.} The parameter $m_n$ determines the number of linear combinations of predictors we consider. We choose $m_n$ over the range $(2\log p_n, \min\{3n/4,p_n\})$, consistent with our theoretical results in Section \ref{sec:3} and with numerical experiments.

\noindent {\bf Choice of $\delta$ and ${\bf q}$.} 
Higher values of $\delta$ lead to fewer variables selected in RIS. 
If $p_n \gg n$, one should select a relatively small proportion of predictors, compared with $p_n\sim n$. We recommend  $\delta=\max\{0,(1+\log(p_n/n))/2\}$ as a default, selecting all the predictors if $p_n\ll n$, and becoming more restrictive as $p_n$ increases compared to $n$. The selection probabilities in the RIS step are then $q_j=|r_{x_j,y}|^{\delta}/\max_j |r_{x_j,y}|^{\delta}$, $j=1,\ldots,p_n$. Hence, the variable with highest marginal utility is definitely included.

\noindent{\bf Choice of $\psi$.} The value of $\psi$ controls sparsity in the random matrix in RIS-RP,         
$\psi \in (0,0.5).$  \cite{Achlioptas_2003} suggests choosing $\psi=1/6$ as a default value. We consider multiple realizations of $\psi$ from the range $[0.1,0.4]$ avoiding very sparse and dense cases.
\subsection{Computational Algorithm and Complexity}\label{sec:3.2}
\noindent{\bf RIS-RP.} Assuming a normal linear model, with the correlation coefficient as $r_{x_j,y}$, for a specific choice of $(m_n, \delta, \psi)$, calculation of $\hat{\bf y}_{new}$ using RIS-RP involves the following steps:
  \begin{algorithmic}[1]
      \State Calculate $r_{x_j,y}$ for $j=1,\ldots,p_n$.
        \State Generate $\gamma_j\sim \mbox{Bernoulli}(q_j)$ where $q_j=|r_{x_j,y}|^{\delta}/\max\{|r_{x_j,y}|^{\delta}\},$ $j=1,\ldots,p_n$. \linebreak
         IF $\gamma_j=1$, generate $R_n$ with $R_{i,j}$ as in (\ref{eq_rp18}), $i=1,\ldots,n$. ~ 
ELSE set $R_{i,j}=0$.
\State Post-multiply $R_n$ with $X_n$. Set $Z_n=X_nR_n^{\prime}$.
\State Compute $\hat{\bta}=\left(Z_n^{\prime}Z_n+I\right)^{-1}Z_n^{\prime} \yn$.
      \State For a given $X_{new}$, compute $Z_{new}=X_{new}R_n^{\prime}$ and $\hat{\bf y}_{new}=Z_{new} \hat{\bta}$.
  \end{algorithmic}
 The complexity of steps 1, 2-3, 4 and 5 are $O(p_n)$, $O(n\pg m_n)$, $O(n m_n^2)$ and $O\left(n_{new} \pg m_n \right)$, respectively, where $\pg =\sum \gamma_j$. Thus, if $n_{new}\leq n$, the total complexity for a single choice of $(m_n, \delta, \psi)$ is $O(p_n)+2O(n m_n\pg )+O( n m_n^2) $ without any parallelization.

\vskip5pt
\noindent{\bf RIS-PCR.} 
RIS-PCR differs from RIS-RP in step 2 of the algorithm. After generation of $\bgm$, RIS-PCR applies SVD of $X_{\bgm}$ involving complexity $O\left(n\pg \min\{n,\pg\}\right)$.
Therefore, the two methods have comparable time complexity unless either $n$ or $\pg$ is much larger than $m_n$. Although theoretically we do not impose any restriction on $\pg$, in practice when $p_n=\exp\{o(n)\}$ and $\delta\geq 2$, $\pg$ is usually of order $n$. 

\vskip5pt
\noindent{\bf Increment of complexity due to aggregation.}
Suppose $N$ different choices of $(m_n, \psi, R_n)$ are considered. Each choice yields a model $\mathcal{M}_l : y\sim f\left(y | {\bf x}, m_{n,l}, \psi_l, R_{n,l} \right)$ along with a corresponding estimate of ${\bf y}_{new}$ (say $\hat{\bf y}_{new,l}$), where $l\in\{1,2, \ldots, N\}$. The proposed estimate is the simple average of these $N$ estimates of ${\bf y}_{new}$.

Step 1 in TARP is not repeated over the replicates, while other steps are repeated $N$ times.  Moreover, screening and aggregration steps are embarrassingly  parallelizable. Given $k$ CPUs, if $n_{new}\leq n$, the total complexity is $O(p_n/k)+2O(Nn m_n\pg/k )+ O(N n m_n^2/k) $ for RIS-RP, and $O(p_n/k)+O( N n \pg \min\{n, \pg\}/k) +O(Nn m_n\pg/k )+ O( N n m_n^2/k )$ for RIS-PCR. 

\vskip5pt
\noindent{\bf Increment of complexity for non-Gaussian GLMs.}
 For non-Gaussian GLMs, appropriate marginal utility measures include MMLE, MML or RCC. Analytic expressions of these estimates are unavailable. Approximation algorithms, like Fisher's scoring and coordinate descent, increase the complexity of Step 1 to an approximate order $O(np_n)$. However, in practice these measures do not improve performance over a simple correlation coefficient even in binary response cases.
 In Step 4, we calculate the posterior mean of the compressed regression coefficients, where we rely on either Laplace type approximations or MCMC for non-Gaussian likelihoods. However, as we are now dealing with $m_n$-dimensional regressors, the computational cost of MCMC is quite modest. Refer to SM Section \ref{sm:2p1}.


\section{Theory on Predictive Accuracy} \label{sec:3}

We study asymptotic performance of the predictive distribution of TARP for a single random projection matrix without considering aggregation.  We focus on \emph{weakly sparse} or \emph{dense} cases where the absolute sum of the true regression coefficients is bounded. This condition includes \emph{strong sparsity} where only a few covariates have non-zero coefficients. 

The projection matrix in TARP depends on the random variable $\bgm$, and therefore is denoted by $\Rg$. 
We denote a particular realization of the response variable as $y$, and a particular realization of the predictors $(x_1, x_2, \ldots, x_{p_n})^{\prime}$ as ${\bf x}$.  
Let $f_0$ be the true density of $y$ given the predictors, and $f(y|{\bf x}, \bgm ,\Rg, \bta)$ be the conditional density of $y$ given the model induced by $\left(\bgm,\Rg\right)$, $\bta$ drawn from the posterior distribution,  and ${\bf x}$. We follow \cite{Jiang_2007} in showing that the predictive density under our procedure is close to the true predictive density in an appropriate sense.

\subsection{The Case with Bounded Predictors}
In this subsection we assume that each covariate $x_j$ is standardized with $|x_j|<M$, for $j=1,2,\ldots,p_n$, and $M$ is a constant. We also assume that the scale parameter $\sigma^2$ in (\ref{eq_rp1}) is known.  We require the following two assumptions on $f_0$ and the design matrix.

\vskip5pt
\noindent{\bf Assumption (A1)}  Let $r_{x_j,y}$ denote the value of the chosen marginal utility measure for the observed values of $x_j$. Then for each data point $(y,{\bf x})$ and constant $\delta$ in (\ref{eq_rp2}), there exists a positive constant $\alpha_{\delta}$ such that

{\centering  $\displaystyle\lim_{p_n\rightarrow\infty} \frac{1}{p_n} \sum_{j=1}^{p_n} \displaystyle x_{j}^2 |r_{x_j,y}|^{\delta} \rightarrow {\alpha}_{\delta}.$ \par}

\vskip5pt
\noindent {\bf Assumption (A2)} 
Let $q(\bgm)= \prod_{i=1}^{n} q_j^{\gamma_j} (1-q_j)^{(1-\gamma_j)}$, with $q_j$ defined in (\ref{eq_rp2}), denote the probability of obtaining a particular $\bgm=(\gamma_1,\ldots,\gamma_{p_n})'$ in the RIS step.  Let $\Gamma_l \subset \{0,1\}^{p_n}$ denote the set of $\bgm$ vectors such that $p_{\bgm} = l$, and let $\mathcal{M}_l \subset \Gamma_l$ denote the first $p_n^{k_n}$ elements of $\Gamma_l$ ordered in their $q(\bgm)$ values. Let $\mathcal{A}_n$ be the event that $\bgm \in \mathcal{M}_l$ for some $l=1,\ldots,p_n$.  Then, 
$P(\mathcal{A}_n^c)=P\big( \{\bgm: \bgm \notin \cup_l \mathcal{M}_l\} \big)
\le \exp(-n\varepsilon_n^2/4),$ almost surely under $f_0$, for some increasing sequence of integers $\{k_n\}$ and sequence $\{\varepsilon_n\}$ satisfying $0<\varepsilon_n^2<1$ and $n\varepsilon_n^2\rightarrow\infty$.

\begin{remark}\label{rm2}
	As the probability of selection in the random screening step depends on the marginal utility estimates of the predictors and the response, assumption (A2) is on the data generating process. 
	Violations of (A2) would imply that large numbers of predictors have marginal utility estimates that are not close to zero. We discuss (A2) in more detail in Section \ref{sm:2.1.0} of SM. 
\end{remark}

{\it Measures of closeness:} Let $\nu_{\bf x}(d{\bf x})$ be the probability measure for ${\bf x}$, and $\nu_y(dy)$ be the dominating measure for conditional densities $f$ and $f_0$. The dominating measure of $(y,{\bf x})$ is taken to be the product of $\nu_y(dy) \nu_{\bf x}(d{\bf x})$.
The Hellinger distance between $f$ and $f_0$ is

\vskip5pt
{\centering  $d(f,f_0)=\sqrt{\displaystyle\int \left(\sqrt{f}-\sqrt{f_0} \right)^2 \nu_{\bf x}(d{\bf x}) \nu_y(dy)}.$ \par}

\vskip5pt
\noindent The Kullback-Leibler divergence between $f$ and $f_0$ is

\vskip5pt
{\centering $ d_0(f,f_0)=\displaystyle\int f_0 \ln \left(\frac{f_0}{f} \right) \nu_{\bf x}(d{\bf x})\nu_y(dy).$ \par}

\vskip5pt
\noindent Define  $\quad d_t(f,f_0)=t^{-1}\Big( \displaystyle\int f_0 \displaystyle\left(\frac{f_0}{f} \right)^t\nu_{\bf x}(d{\bf x})\nu_y(dy) -1\Big),$ for any $t>0.$

\vskip5pt
\noindent Consider the following two facts: (i) $d(f,f_0) \leq \left( d_0(f,f_0) \right)^{1/2}$, and (ii) $d_t(f,f_0)$ decreases to $d_0(f,f_0)$ as $t$ decreases to $0$  (see \cite{Jiang_2007}).

Let $\mathcal{P}_n$ be a sequence of sets of probability densities, and $\varepsilon_n$ be a sequence of positive numbers.
Let $ N(\varepsilon_n,\mathcal{P}_n)$ be the $\varepsilon_n$-covering number, i.e., the minimal number of Hellinger balls of radius $\varepsilon_n$ needed to cover $\mathcal{P}_n$. 

\vskip10pt
\noindent{\bf RIS-RP.} The result showing predictive accuracy of RIS-RP is stated below.

\begin{theorem}\label{thm:1}
	Let $\bta\sim N({\bf 0},\sigma_{\theta}^2 I)$, and $f(y|{\bf x}, \bgm ,\Rg, \bta)$ be the conditional density of $y$ given the model induced by $\left(\bgm,\Rg\right)$, where $\Rg$ is as in (\ref{eq_rp2}) and (\ref{eq_rp18}). Let $\bbta_{0}$ be the true regression parameter with $\sum_j |\beta_{0,j} |<K$ for some constant $K$, and assumptions (A1)-(A2) hold. Consider the sequence $\{ \varepsilon_n\}$ as in assumption (A2) satisfying $0< \varepsilon_n^2<1$ and $n\varepsilon_n^2 \rightarrow \infty$, and assume that the following statements hold for sufficiently large $n$:\\
	(i) $ m_n |\log \varepsilon_n^2| < n \varepsilon_n^2/4$, \\
	(ii) $ k_n \log p_n < n \varepsilon_n^2/4$, and \\
	(iii) $m_n \log \left(1 + D\left(\sigma_{\theta} \sqrt{6 n \varepsilon_n^2 p_n m_n^2} \right) \right) < n \varepsilon_n^2/4$, where
	
	\qquad $D(h^{*})=h^{*} \sup_{h\leq h^{*}} |a^{\prime} (h)| \sup_{h\leq h^{*}} |a^{\prime} (h)/b^{\prime}(h)|$, $b(\cdot)$ as in (\ref{eq_rp1}). Then, 
	
	\vskip5pt
	{\centering $P_{f_0}\left[\pi\left\{d(f,f_0)> 4\varepsilon_n| \mathcal{D}^n \right\} > 2 e^{-n\varepsilon_n^2/4 } \right] \leq 2 \displaystyle e^{-n\varepsilon_n^2/5 },  $ \par}
	\vskip5pt
	
	\noindent as ~$\min\{m_n,p_n,n\}\rightarrow\infty$, where $\pi\{ \cdot | \mathcal{D}^n\}$ is the posterior measure.
\end{theorem}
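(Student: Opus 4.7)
The plan is to reduce the claim to the general posterior-contraction framework of \cite{Jiang_2007} by verifying three ingredients uniformly over the randomness in $(\bgm,\Rg)$: (a) a Hellinger $\varepsilon_n$-covering bound on a sieve $\mathcal{P}_n$, (b) an exponentially small probability for the sieve complement, and (c) a lower bound of order $\exp(-n\varepsilon_n^2)$ on the prior mass of a Kullback--Leibler neighborhood of $f_0$. Once (a)--(c) hold, the standard test construction from the covering number together with a Markov-type bound on the posterior normalizer gives the double-exponential tail stated in the theorem; the form of conditions (i)--(iii) is chosen precisely so that these three pieces go through.

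For the sieve I would take
\[
\mathcal{P}_n \;=\; \bigl\{ f(y\mid {\bf x},\bgm,\Rg,\bta) : \bgm\in\cup_{l}\mathcal{M}_l,\ \|\bta\|_2 \le C_n \bigr\},\qquad C_n = \sigma_\theta\sqrt{6n\varepsilon_n^2 p_n m_n^2}.
\]
Assumption (A2) bounds the probability of $\{\bgm\notin\cup_l\mathcal{M}_l\}$ by $e^{-n\varepsilon_n^2/4}$, and the Gaussian prior tail bounds $\pi(\|\bta\|_2>C_n)$ by a much smaller quantity, settling (b). For (a), I would count at most $p_n\cdot p_n^{k_n}$ admissible $\bgm$; for each fixed $(\bgm,\Rg)$, boundedness $|x_j|<M$ and the GLM structure in (\ref{eq_rp1}) yield a Lipschitz bound on the Hellinger distance in $\bta$ with constant of order $D(C_n)$, so the $\bta$-ball of radius $C_n$ is covered by $(1+D(C_n)/\varepsilon_n)^{m_n}$ Hellinger balls. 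Summing log-counts,
\[
\log N(\varepsilon_n,\mathcal{P}_n) \;\le\; (k_n+1)\log p_n \;+\; m_n|\log\varepsilon_n^2| \;+\; m_n\log(1+D(C_n)),
\]
which is at most $n\varepsilon_n^2$ by hypotheses (i)--(iii).

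The delicate step, and the one I expect to be the main obstacle, is (c). I must exhibit a set of $(\bgm,\Rg,\bta)$ of joint prior probability at least $e^{-n\varepsilon_n^2}$ on which the compressed linear predictor $(\Rg{\bf x})^{\prime}\bta$ is close enough to ${\bf x}^{\prime}\bbta_{0}$ that $d_0(f,f_0)<\varepsilon_n^2$. The natural candidate is a $\bta^{*}$ arising from the least-squares reconstruction of the restriction of $\bbta_{0}$ to the selected coordinates through $\Rg$; under the RP in (\ref{eq_rp18}) the entries of $R_n^{*}$ are i.i.d.\ with $E[(R_n^{*})^{\prime}R_n^{*}]=I$, so inner products are preserved in Johnson--Lindenstrauss fashion. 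I would combine assumption (A1) with the weak-sparsity bound $\sum_j|\beta_{0,j}|<K$ to control $E_{\Rg}\bigl[((\Rg{\bf x})^{\prime}\bta^{*}-{\bf x}^{\prime}\bbta_{0})^{2}\bigr]$ in terms of $\alpha_\delta$, and then concentrate around this mean over $\Rg$ to obtain a set of projection matrices of non-trivial probability on which the approximation is within $\varepsilon_n^2$. Continuity of $a,b$ in (\ref{eq_rp1}) together with boundedness of the $x_j$ converts this linear-predictor error into a KL bound of the required size. The Gaussian prior on $\bta$ then assigns to a Euclidean $\varepsilon_n$-ball around $\bta^{*}$ mass at least $\exp\bigl(-\|\bta^{*}\|^{2}/(2\sigma_\theta^{2}) - (m_n/2)\log(2\pi\sigma_\theta^{2}/\varepsilon_n^{2})\bigr)$; condition (i) controls the $m_n\log\varepsilon_n^{-2}$ term, while the norm bound on $\bta^{*}$ (inherited from $\|\bbta_{0}\|_1<K$ and the operator-norm bound for the sparse RP) keeps the quadratic term of order $n\varepsilon_n^{2}$. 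Plugging (a)--(c) into the master theorem of \cite{Jiang_2007}, and converting $d_0$-neighborhoods into Hellinger ones via fact~(i), yields the conclusion, with the outer probability $2e^{-n\varepsilon_n^2/5}$ arising from a Chebyshev bound on the normalizing integral $\int (f/f_0)\,d\pi$, as in Lemma~1 of \cite{Jiang_2007}.
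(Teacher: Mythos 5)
Your overall architecture matches the paper's: both reduce the theorem to the three conditions of \cite{Jiang_2007} (Lemma \ref{lm:1}), with (A2) handling the $\bgm$-part of the sieve complement, a $p_n^{k_n+1}\cdot(1+D(\cdot)/\varepsilon_n^2)^{m_n}$-type entropy count for condition (a), and Gaussian tails for condition (b). However, there are two concrete problems. First, your sieve radius is wrong: you take $\|\bta\|_2\le C_n$ with $C_n=\sigma_\theta\sqrt{6n\varepsilon_n^2p_nm_n^2}$, but that quantity is the range of the \emph{linear predictor}, not of $\bta$. The paper restricts $|\theta_j|<c_n$ with $c_n=\sigma_\theta\sqrt{5n}\,\varepsilon_n$, and only after multiplying by $\|\Rg\bfx\|\,\sqrt{m_n}\le m_n\sqrt{p_n}$ (via Lemma \ref{lm:2}, which uses (A1)) does the argument of $D(\cdot)$ become $c_nm_n\sqrt{p_n}\le\sigma_\theta\sqrt{6n\varepsilon_n^2p_nm_n^2}$, matching condition (iii). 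With your choice, the Lipschitz constant of the Hellinger distance in $\bta$ must be evaluated at $\sup_{|h|\le\|\Rg\bfx\|C_n}$, i.e.\ at an argument roughly $\sqrt{m_np_n}$ times larger than what condition (iii) controls, so your entropy bound is not covered by the stated hypotheses.

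Second, and more seriously, your plan for condition (c) — constructing a least-squares/Johnson--Lindenstrauss reconstruction $\bta^{*}$ of $\bbta_0$ and bounding the Gaussian mass of a ball around it — is not what the paper does, and it faces obstacles you do not resolve: the screening vector $\bgm$ need not retain all coordinates with $\beta_{0,j}\neq 0$, so $(\Rg\bfx)^{\prime}\bta^{*}$ can only hope to approximate $\bfx_{\bgm}^{\prime}\bbta_{0,\bgm}$ rather than $\bfx^{\prime}\bbta_0$, and JL inner-product preservation is a per-point, high-probability-over-$\Rg$ statement, not a uniform-over-$\bfx$ one. The paper avoids the reconstruction entirely: for fixed $(\bfx,\Rg)$ the scalar $(\Rg\bfx)^{\prime}\bta$ is a univariate $N(0,\sigma_\theta^2\|\Rg\bfx\|^2)$ variable, and Lemma \ref{lm:3} (from \cite{GD_2015}) gives the direct small-ball bound $P(|(\Rg\bfx)^{\prime}\bta-\bfx^{\prime}\bbta_0|<\Delta)>\exp\{-((\bfx^{\prime}\bbta_0)^2+\Delta^2)/(\sigma_\theta^2\|\Rg\bfx\|^2)\}\cdot 2^4\Delta^4/(\sigma_\theta^2\|\Rg\bfx\|^2)$; since $\|\Rg\bfx\|^2/(m_np_n)\to c\ad$ (Lemma \ref{lm:2}) and $|\bfx^{\prime}\bbta_0|<K$, this probability is only polynomially small and easily exceeds $e^{-n\varepsilon_n^2/4}$ after taking $E_{\bgm}$ and choosing $\Delta_n=\varepsilon_n^2/(4M)$. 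You flagged (c) as the main obstacle yourself; as written, it is a gap, and the repair is to replace the reconstruction argument by this small-ball computation.
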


\vskip5pt
\noindent{\bf RIS-PCR.} Asymptotic guarantees on predictive accuracy of RIS-PCR requires an additional assumption.

\vskip5pt
\noindent{\bf Assumption (A3)} ~ Let $X_{\bgm}$ be the sub-matrix of $X_n$
with columns corresponding to non-zero values of $\bgm$, and ${\bf x}_{\bgm}$ be a row of $X_{\bgm}$. Let $V_{\bgm}$ be the $m_n \times \pg$ matrix of $m_n$ eigenvectors corresponding to the $m_n$-largest eigenvalues of $X_{\bgm}^{\prime} X_{\bgm}$.
Then, for each $\bgm$ and data point ${\bf x}_{\bgm}$,

\vskip5pt
{\centering $ \left. \| V_{\bgm} {\bf x}_{\bgm} \|^2 \right/\| {\bf x}_{\bgm} \|^2 \geq \alpha_n,$ \par}
\vskip5pt

\noindent where $\alpha_n \sim (n\varepsilon_n^2)^{-1}$, where the sequence $\{ \varepsilon_n^2\}$ is as in assumption (A2).

\begin{theorem}\label{thm:2}
	Let $\bta\sim N({\bf 0},\sigma_{\theta}^2 I)$, and $f(y|{\bf x}, \bgm ,\Rg, \bta)$ be the conditional density of $y$ given the model induced by $\left(\bgm,\Rg\right)$, where $\Rg$ is as in (\ref{eq_rp2}) and (\ref{eq_rp16}). Let $\bbta_{0}$ be the true regression parameter with $\sum_j |\beta_{0,j} |<K$ for some constant $K$, and assumptions (A1)-(A3) hold. Assume that the conditions (i)-(ii) of Theorem \ref{thm:1} hold, and 
	
	\noindent (iii) 
	$m_n \log \left(1 + D\left(\sigma_{\theta} \sqrt{6 n \varepsilon_n^2 p_n m_n} \right) \right) < n \varepsilon_n^2/4$, where $D(h^{*})$ is as defined in Theorem \ref{thm:1}.
	
	\vskip5pt
\noindent Then, 
	\hspace{1 in} $P_{f_0}\left[\pi\left\{d(f,f_0)> 4\varepsilon_n| \mathcal{D}^n \right\} > 2 e^{-n\varepsilon_n^2/4 } \right] \leq 2 e^{-n\varepsilon_n^2/5 },  $ 
	\vskip5pt
	
	\noindent as ~$\min\{m_n,p_n,n\}\rightarrow\infty$, where $\pi\{ \cdot | \mathcal{D}^n\}$ is the posterior measure.
\end{theorem}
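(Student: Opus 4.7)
The plan is to follow the Bayesian predictive-consistency framework of \cite{Jiang_2007}, paralleling the proof of Theorem~\ref{thm:1} but with modifications forced by the PCR projection $R_{\bgm}=V'_{\bgm,m_n}$. Three ingredients must be checked: (K) the prior puts mass at least $e^{-n\varepsilon_n^2/4}$ on a Kullback--Leibler neighborhood of $f_0$ of radius $\varepsilon_n^2$; (S) there is a sieve $\mathcal{P}_n$ of predictive densities with Hellinger covering number satisfying $\log N(\varepsilon_n,\mathcal{P}_n)\leq n\varepsilon_n^2/2$; (C) $\pi(\mathcal{P}_n^c)\leq e^{-n\varepsilon_n^2}$. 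Once (K)--(C) are established, the stated posterior-contraction bound follows from Jiang's theorem exactly as in Theorem~\ref{thm:1}.

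For the sieve I take
\[
\mathcal{P}_n \;=\; \bigl\{f(\,\cdot\,|\,\cdot\,,\bgm,\Rg,\bta):\; \bgm\in\mathcal{A}_n,\ \|\bta\|\leq B_n\bigr\},\qquad B_n \;\asymp\; \sigma_\theta\sqrt{n\varepsilon_n^2\,m_n}.
\]
Assumption (A2) delivers the $\bgm$-tail, and a Gaussian tail bound for the $N(\mathbf{0},\sigma_\theta^2 I)$ prior handles $\|\bta\|>B_n$, giving (C). For (S), the cardinality $|\mathcal{A}_n|\leq p_n\cdot p_n^{k_n}$ contributes $(k_n+1)\log p_n$, which is bounded by $n\varepsilon_n^2/4$ via condition~(ii). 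For fixed $(\bgm,\Rg)$, the $\bta$-covering of the ball $\{\|\bta\|\leq B_n\}\subset\mathbb{R}^{m_n}$ in the Hellinger metric on densities has log-cardinality of order $m_n\log\bigl(1+D(h^\ast)\bigr)$, where $h^\ast$ bounds $|(R_{\bgm}{\bf x})'\bta|$ over the sieve. Because $V_{\bgm,m_n}$ has orthonormal rows, $\|R_{\bgm}{\bf x}\|=\|V_{\bgm,m_n}{\bf x}_{\bgm}\|\leq\|{\bf x}_{\bgm}\|\leq M\sqrt{p_n}$, hence $h^\ast\leq M\sigma_\theta\sqrt{6n\varepsilon_n^2\,p_n\,m_n}$. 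This sharper control of $\|R_{\bgm}{\bf x}\|$ is absent in the RP setting, where the $\pm1/\sqrt{2\psi}$ entries inflate the norm by an additional $\sqrt{m_n}$ factor, and is precisely the source of the improvement from $m_n^2$ to $m_n$ inside the argument of $D(\cdot)$ in condition~(iii). Condition~(iii) then closes out the entropy bound.

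For (K), I set $\bta^\ast=V_{\bgm,m_n}\,\bbta_{0,\bgm}$ for each admissible $\bgm\in\mathcal{A}_n$, so that $(R_{\bgm}{\bf x})'\bta^\ast={\bf x}_{\bgm}'V'_{\bgm,m_n}V_{\bgm,m_n}\,\bbta_{0,\bgm}$ and the error against the true linear predictor decomposes as
\[
{\bf x}'\bbta_0-(R_{\bgm}{\bf x})'\bta^\ast \;=\; {\bf x}_{\overline{\bgm}}'\bbta_{0,\overline{\bgm}}\;+\;{\bf x}_{\bgm}'\bigl(I-V'_{\bgm,m_n}V_{\bgm,m_n}\bigr)\bbta_{0,\bgm}.
\]
The first summand is the omitted-variable error, handled as in Theorem~\ref{thm:1} via (A1) together with the $\mathcal{A}_n$ restriction and the $\ell^1$ bound $\sum_j|\beta_{0,j}|<K$. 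The second is the principal-component residual: here assumption~(A3) enters, controlling the per-datapoint spectral energy captured by the top $m_n$ eigenvectors of $X'_{\bgm}X_{\bgm}$ and, combined with $\|\bbta_{0,\bgm}\|\leq K$, yielding a uniform bound on the residual. Feeding both pieces through the GLM log-density in (\ref{eq_rp1}) and invoking condition~(i) produces the desired $\varepsilon_n^2$ KL estimate. The main obstacle is exactly this step: translating the per-point spectral bound of (A3) into a uniform KL neighborhood under the weakly sparse $\ell^1$ structure of $\bbta_0$ is the genuinely new ingredient, whereas the sieve pieces~(S) and~(C) are direct modifications of the Theorem~\ref{thm:1} argument once the orthonormality of $V_{\bgm,m_n}$ is exploited.
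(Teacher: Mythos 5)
Your sieve and prior-tail steps (S) and (C) coincide with the paper's verification of conditions (a) and (b): the paper likewise uses the row-orthonormality of $V_{\bgm,m_n}$ to get $\|\Rg {\bf x}\|\le \|{\bf x}_{\bgm}\|\le \sqrt{p_n}$, and this is exactly what replaces $c_n m_n\sqrt{p_n}$ by $c_n\sqrt{m_n p_n}$ inside $D(\cdot)$ and produces the improvement from $m_n^2$ to $m_n$ in condition (iii). The genuine gap is in your prior-mass step (K). You propose the plug-in point $\bta^{*}=V_{\bgm,m_n}\bbta_{0,\bgm}$ and claim that assumption (A3) yields a uniform bound on the residual ${\bf x}_{\bgm}^{\prime}(I-V_{\bgm,m_n}^{\prime}V_{\bgm,m_n})\bbta_{0,\bgm}$. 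It does not: (A3) is a \emph{lower} bound, $\|V_{\bgm}{\bf x}_{\bgm}\|^2/\|{\bf x}_{\bgm}\|^2\ge \alpha_n$ with $\alpha_n\sim (n\varepsilon_n^2)^{-1}\rightarrow 0$, so it only guarantees that a \emph{vanishing} fraction of the energy of ${\bf x}_{\bgm}$ is captured by the top $m_n$ eigenvectors. The residual $\|(I-V_{\bgm}^{\prime}V_{\bgm}){\bf x}_{\bgm}\|$ can be as large as $\sqrt{1-\alpha_n}\,\|{\bf x}_{\bgm}\|=O(\sqrt{p_n})$, and Cauchy--Schwarz then bounds your second summand only by $O(K\sqrt{p_n})$, nowhere near the $\varepsilon_n^2$ scale required for a Kullback--Leibler neighborhood. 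Likewise the omitted-variable term ${\bf x}_{\overline{\bgm}}^{\prime}\bbta_{0,\overline{\bgm}}$ is only bounded by the constant $K$; assumption (A2) constrains the probability of screening configurations, not the magnitude of the excluded coefficients, so this term also cannot be driven below $\varepsilon_n^2$. As written, step (K) fails.

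The paper's verification of condition (c) avoids approximating ${\bf x}^{\prime}\bbta_0$ altogether. It applies the Gaussian small-ball bound of Lemma \ref{lm:3} directly to the event $\{|(\Rg{\bf x})^{\prime}\bta-{\bf x}^{\prime}\bbta_0|<\Delta_n\}$: since $(\Rg{\bf x})^{\prime}\bta$ is a priori $N(0,\sigma_{\theta}^2\|\Rg{\bf x}\|^2)$, the prior mass near the bounded target ${\bf x}^{\prime}\bbta_0$ is at least $\exp\{-(({\bf x}^{\prime}\bbta_0)^2+\Delta_n^2)/(\sigma_{\theta}^2\|\Rg{\bf x}\|^2)\}\,2^4\Delta_n^4/(\sigma_{\theta}^2\|\Rg{\bf x}\|^2)$. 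Assumption (A3) is then used in the direction opposite to your reading: it supplies the lower bound $\|\Rg{\bf x}\|^2\ge \alpha_n\|{\bf x}_{\bgm}\|^2$ so that the variance of the projected prior is not too small and the exponent does not explode, while the upper bound $\|\Rg{\bf x}\|^2\le\|{\bf x}_{\bgm}\|^2$ controls the prefactor; combined with Lemma \ref{lm:2}(b) and the choice $\Delta_n=\varepsilon_n^2/(4M)$ this delivers the required $\exp\{-n\varepsilon_n^2/4\}$ prior mass. To complete your argument you would need to replace step (K) by this small-ball computation (or else impose a much stronger spectral condition forcing $1-\alpha_n$ to be small and the true signal to be aligned with the leading principal components).
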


%
\noindent The implications of Assumption (A3), and the conditions (i)-(iii) in Theorems \ref{thm:1} and \ref{thm:2}, are discussed in Remarks \ref{rm5} and \ref{rm6} in Section \ref{sm:2.1} of SM. Section \ref{sm:2.1.1} contains a discussion on interpretations of the theorems. The proof of Theorem \ref{thm:1} is in the Appendix, and that of Theorem \ref{thm:2} is in SM Section \ref{sm:2.2}. 

\subsection{The Case with Gaussian Predictors}\label{sec:unbdd}
In this subsection we modify the theory on asymptotic properties for Gaussian predictors. Let ${\bf x} \sim N\left( {\bf 0}, \Sigma_x\right)$, with $\Sigma_x=((\sigma_{i,j}))$ be positive definite and $\sigma_{i,i}=1$ for all $i=1,\ldots,p_n$. We further assume strong $\rho$-mixing conditions on the covariance structure as follows: 

\vskip5pt
\noindent{\bf Assumption (B1)} The off-diagonal elements of $\Sigma_x$ satisfy $\sigma_{i,j}=\rho_{|i-j|} $ with  $\sum_{j=1}^{\infty} \rho_{j}^2<\infty$.

\begin{remark}\label{rm:e1}
Assumption (B1) holds in many situations, for example, under auto regressive structure, block diagonal structure with finite block sizes, or the cases where $\left| \sigma_{i,j} \right| \leq |i-j|^{-r_0}$ for some $r_0>1/2$. Further, it can be shown using simple algebra that (B1) implies $\sum_{i\neq j} \sigma_{i,j}^2=O\left(p_n\right)$.
\end{remark}

We further assume that

\vskip5pt
\noindent{\bf Assumption (B2)} The largest eigenvalue of $\Sigma_x$ is less than $c_0 l_n$ for some increasing sequence of numbers $l_n$ such that ~$\max\left\{l_n\log (l_n),l_n m_n \sqrt{n \varepsilon_n^2}\right\}=o(p_n)$. The determinant of $\Sigma_x$ satisfies $\log \left|\Sigma_x \right| \geq - p_n\log p_n$.

\vskip5pt
\noindent{\bf Assumption (B3)} Let 

\qquad \quad $g\left(u^{*} \right)=E_{Y|\bfx}\left[\left\{Y a^{\prime}(u^{*})+b^{\prime}(u^{*}) \right\} \exp \left\{Y (a(u^{*})-a(h_0))+ b(u^{*})-b(h_0)\right\} \right]$ 

\noindent where $h_0=\bfx^\prime\bbta_0$, then $\left| g(u) \right|\leq e^{cu}$ for some constant $c>0$.

Assumption (B3) is satisfied for many generalized linear models with canonical link functions, for example, normal linear models with identity link, binomial regression with logit link, exponential regression with log-link etc. However, it does not hold for Poisson regression with log-link.  


\begin{lemma}\label{lm:5}
	Let (B1) hold, then (a) $\| \bfx  \|^2/p_n \rightarrow 1$ almost surely. 
	
	(b) Further, $\| \bfx  \|^2/(n\varepsilon_n)^b p_n \rightarrow 0$ almost surely for some $b>0$ as $n\varepsilon_n^2\rightarrow\infty.$
	%
\end{lemma}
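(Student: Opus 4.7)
The plan is to view $\bfx=(x_1,\ldots,x_{p_n})$ as the first $p_n$ coordinates of a stationary Gaussian sequence $(x_j)_{j\ge 1}$ with mean zero, unit variance, and autocovariance $\rho_k$ satisfying $\sum_k \rho_k^2<\infty$ by (B1). Then $\|\bfx\|^2/p_n=(1/p_n)\sum_{j=1}^{p_n} x_j^2$, and since $E[x_j^2]=\sigma_{jj}=1$, the intended limit in (a) is $1$.

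For part (a) the main step is to invoke the pointwise ergodic theorem. By Parseval's identity, $\sum_k \rho_k^2<\infty$ forces the spectral measure of $(x_j)$ to have an $L^2$ density, hence to be absolutely continuous and in particular atomless. By the Maruyama--Grenander criterion, a stationary Gaussian sequence is ergodic iff its spectral measure has no atoms, so $(x_j)$ is ergodic (indeed strongly mixing, since $\rho_k\to 0$). Birkhoff's theorem applied to $x_1^2\in L^1$ then yields $(1/p_n)\sum_{j=1}^{p_n} x_j^2\to E[x_1^2]=1$ almost surely. The supporting moment computation --- $E[\|\bfx\|^2]=p_n$ and $\operatorname{Var}(\|\bfx\|^2)=2p_n+2\sum_{i\ne j}\sigma_{ij}^2=O(p_n)$, using $\operatorname{Cov}(x_i^2,x_j^2)=2\sigma_{ij}^2$ for centred jointly Gaussian unit-variance pairs together with the remark after (B1) --- also delivers $L^2$ convergence and serves as a sanity check.

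Part (b) is then immediate from (a). Since $\varepsilon_n^2<1$ and $n\varepsilon_n^2\to\infty$, one has $n\varepsilon_n\ge n\varepsilon_n^2\to\infty$, so $(n\varepsilon_n)^b\to\infty$ for every $b>0$. Dividing the almost-surely convergent quantity $\|\bfx\|^2/p_n$ by $(n\varepsilon_n)^b$ gives $\|\bfx\|^2/\bigl((n\varepsilon_n)^b p_n\bigr)\to 0$ almost surely.

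The main obstacle is the upgrade from $L^2$ to almost-sure convergence. Chebyshev with the variance bound above only gives $P(|\|\bfx\|^2/p_n-1|>\epsilon)=O(1/p_n)$, which need not be summable in $n$ (e.g.\ if $p_n$ grows only linearly in $n$), so Borel--Cantelli does not apply directly; moreover (B1) alone does not control $\|\Sigma_x\|_{\mathrm{op}}$ sharply enough to run a Hanson--Wright concentration. The spectral-measure route via the ergodic theorem circumvents both issues and is essentially forced by the shape of hypothesis (B1).
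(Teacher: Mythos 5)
Your proof is correct, but it reaches part (a) by a genuinely different route than the paper. The paper's proof cites a strong law of large numbers for sequences with summable correlation decay (Serfling, Theorem 1.8.E): it computes $E(x_j^2)=1$, $\operatorname{var}(x_j^2)=2$ and $\operatorname{cov}(x_i^2,x_j^2)=2\sigma_{i,j}^2=2\rho_{|i-j|}^2$, so that (B1) supplies exactly the summable correlation bound that theorem requires, the side condition $\sum_j \operatorname{var}(x_j^2)(\log j)^2/j^2<\infty$ being trivial. You instead regard $\bfx$ as an initial segment of a stationary Gaussian sequence, use Parseval to conclude that the spectral measure is absolutely continuous and hence atomless, invoke the Maruyama--Grenander criterion to obtain ergodicity, and apply Birkhoff to $x_1^2\in L^1$. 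Both arguments are sound, and both implicitly identify the growing vectors with initial segments of a single infinite sequence. Your route is more conceptual and would succeed under the weaker hypothesis that the spectral measure is atomless, but it relies on heavier machinery and on the Toeplitz stationarity built into (B1); Serfling's SLLN is more elementary to verify and would also tolerate non-stationary covariances obeying the same decay bound. Your observation that Chebyshev plus Borel--Cantelli is insufficient is apt and explains why both proofs reach for a genuine dependent-sequence SLLN. One minor caution: for Gaussian sequences $\rho_k\to 0$ yields mixing in the ergodic-theoretic sense but not $\alpha$-mixing, so the parenthetical ``strongly mixing'' is imprecise, though it plays no role in your argument. Part (b) is handled exactly as in the paper: the almost-surely convergent ratio is divided by a quantity tending to infinity.
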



The statements of assumptions (A1)-(A2) remains unchanged here. Note that the constant $\alpha_\delta$ in (A1) is less than 1 almost surely by Lemma \ref{lm:5}. 
Under assumptions (A1)-(A2) and (B1)-(B3), below we restate the result for RIS-RP under Gaussian predictors.  

\begin{theorem}\label{thm:3}
	Let $\bta\sim N({\bf 0},\sigma_{\theta}^2 I)$, $f(y|{\bf x}, \bgm ,\Rg, \bta)$ be the conditional density of $y$ given the model induced by $\left(\bgm,\Rg\right)$, where $\Rg$ is as in (\ref{eq_rp2}) and (\ref{eq_rp18}), and $\bfx \sim N({\bf 0}, \Sigma_x)$ with $\Sigma_x$ satisfying (B1)-(B2). Let $\bbta_{0}$ be the true regression parameter with $\sum_j |\beta_{0,j} |<K$ for some constant $K$, and assumptions (A1)-(A2), (B3) hold. Assume further that the conditions (i)-(iii) of Theorem \ref{thm:1} hold.
	Then, 	
	\vskip5pt
	{\centering $P_{f_0}\left[\pi\left\{d(f,f_0)> 4\varepsilon_n| \mathcal{D}^n \right\} > 2 e^{-n\varepsilon_n^2/4 } \right] \leq 2 \displaystyle e^{-n\varepsilon_n^2/5 },  $ \par}
	\vskip5pt
	
	\noindent as ~$\min\{m_n,p_n,n\}\rightarrow\infty$, where $\pi\{ \cdot | \mathcal{D}^n\}$ is the posterior measure.
\end{theorem}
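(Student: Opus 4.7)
The plan is to follow the same sieve-based framework of Jiang (2007) used to prove Theorem \ref{thm:1}. One builds a sieve $\mathcal{P}_n$ of candidate densities indexed by $(\bgm, \bta)$, with $\bgm \in \cup_l \mathcal{M}_l$ and $\bta$ in a Euclidean ball of radius $\sigma_\theta\sqrt{6 n \varepsilon_n^2 p_n m_n^2}$, and verifies the three standard ingredients: a prior mass concentration bound $\pi\{d_0(f,f_0) < \varepsilon_n^2\} \geq \exp(-c n \varepsilon_n^2)$, a Hellinger entropy bound $\log N(\varepsilon_n, \mathcal{P}_n) \leq c' n \varepsilon_n^2$, and a sieve-complement mass bound $\pi(\mathcal{P}_n^c) \leq \exp(-c'' n \varepsilon_n^2)$. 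Combined with the (A2) bound on $P_{f_0}(\mathcal{A}_n^c)$, Jiang's posterior consistency theorem then delivers the stated inequality. Conditions (i)--(iii) have been tuned so that each of the three budgets fits inside $n\varepsilon_n^2/4$ under the Gaussian envelope on $\bfx$.

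The only substantive new difficulty relative to Theorem \ref{thm:1} is that $\bfx$ is no longer uniformly bounded. I would handle this by truncation. Define $E_n = \{\|\bfx\|^2 \leq (n\varepsilon_n^2)^b p_n\}$; by Lemma \ref{lm:5}(b), together with a Hanson--Wright type bound for Gaussian quadratic forms under the mixing condition (B1) and the spectral bound (B2), $P_{f_0}(E_n^c) \leq \exp(-c n \varepsilon_n^2)$. On $E_n$, the role played by the constant $M$ in Theorem \ref{thm:1} is taken by the growing envelope $\sqrt{p_n}\,(n\varepsilon_n^2)^{b/2}$. The $\sqrt{p_n}$ factor already appears inside $D(\cdot)$ in condition (iii); the extra $(n\varepsilon_n^2)^{b/2}$ enters only logarithmically and is absorbed into the $n\varepsilon_n^2$ budget.

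With the truncation in place, each ingredient is re-derived by tracking the Theorem \ref{thm:1} argument with the new envelope. The entropy bound uses an $\varepsilon_n$-net of the $\bta$-ball together with $\|\bfx\|$-Lipschitz control of $a(\bfx^\prime \Rg^\prime \bta)$ and $b(\bfx^\prime \Rg^\prime \bta)$, reproducing condition (iii). The sieve-complement bound reduces to a Gaussian tail bound on $\bta \sim N({\bf 0},\sigma_\theta^2 I)$ in dimension $m_n$, for which the chosen radius is more than enough. The prior-concentration step is where assumption (B3) enters: under unbounded Gaussian $\bfx$, the moments $E_{Y|\bfx}[(f_0/f)^t]$ that arise in bounding $d_t(f,f_0)$ involve precisely the function $g(u^*)$ in (B3), and the bound $|g(u)| \leq e^{cu}$ ensures Gaussian-integrability of the resulting tail. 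The determinant lower bound in (B2) keeps change-of-measure factors between the law of $\bfx$ and the reference measures used in covering from being prohibitively large.

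The main obstacle I expect is this prior-concentration step, because the Theorem \ref{thm:1} proof uses $|\bfx^\prime \bbta_0| \leq M K$ to Lipschitz-expand $a(\cdot)$ and $b(\cdot)$ around $\bfx^\prime \Rg^\prime \bta$, and neither $\bfx$ nor $\bfx^\prime \bbta_0$ is bounded here. The workaround is to split the KL integrand across $E_n$ and $E_n^c$: on $E_n^c$ invoke (B3) to dominate the contribution by $\exp(-c n \varepsilon_n^2)$, and on $E_n$ rerun the Theorem \ref{thm:1} expansion with $M$ replaced by the growing envelope and Assumption (A1) providing the correct scaling of $\sum_j x_j^2 |r_{x_j,y}|^\delta$. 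Once the three budgets are assembled, the union-bound argument of Jiang (2007), unchanged from Theorem \ref{thm:1}, yields the claimed exponential inequality.
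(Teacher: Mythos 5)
Your overall architecture matches the paper's: verify Jiang's three conditions, note that the entropy and sieve-complement steps survive essentially intact (the paper gets this from Lemma \ref{lm:4}, which shows $\|\Rg\bfx\|^2 \le m_n p_n$ continues to hold almost surely under (B1), so conditions (a)--(b) are literally unchanged rather than needing a re-derivation with a growing envelope), and isolate the prior-concentration step as the place where unboundedness of $\bfx$ bites. Your idea of splitting the integrand defining $d_{t=1}(f,f_0)$ over a truncation event and its complement, invoking (B3) on the tail, is exactly the paper's device.

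The genuine gap is the truncation level. You take $E_n=\{\|\bfx\|^2\le (n\varepsilon_n^2)^b p_n\}$; the paper takes $A_{p_n}=\{\|\bfx\|<\sqrt{3}\,p_n\}$, i.e.\ $\|\bfx\|^2\lesssim p_n^2$, and this difference is not cosmetic. On the complement, the integrand is dominated (via (B3) and the Cauchy--Schwarz bound $|(\Rg\bfx)^{\prime}\bta-\bfx^{\prime}\bbta_0|\le c\sqrt{n\varepsilon_n^2}\,m_n\|\bfx\|$ for $\bta$ in the high-prior-mass ball) by $\exp\{c\sqrt{n\varepsilon_n^2}\,m_n\|\bfx\|\}$, which grows linearly in $\|\bfx\|$ in the exponent at rate $\sqrt{n\varepsilon_n^2}\,m_n$. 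The Gaussian density only supplies quadratic decay at rate $\|\bfx\|^2/(c_0 l_n)$ by (B2), so the decay beats the growth only once $\|\bfx\|\gg l_n m_n\sqrt{n\varepsilon_n^2}$. Assumption (B2) guarantees $l_n m_n\sqrt{n\varepsilon_n^2}=o(p_n)$ --- which is precisely why the threshold $\sqrt{3}\,p_n$ works --- but it does not guarantee $l_n m_n\sqrt{n\varepsilon_n^2}=O\bigl((n\varepsilon_n^2)^{b/2}\sqrt{p_n}\bigr)$; when $p_n$ grows exponentially in $n$ and $l_n m_n$ is near its permitted maximum, your tail integral $E_\bfx[\exp\{c\sqrt{n\varepsilon_n^2}m_n\|\bfx\|\}\,\mathbf{1}_{E_n^c}]$ is not $o(\varepsilon_n^2)$, and the claimed bound $\exp(-cn\varepsilon_n^2)$ fails. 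Relatedly, your bound $P_{f_0}(E_n^c)\le\exp(-cn\varepsilon_n^2)$ is not the quantity that matters: the expectation over $\bfx$ sits inside $d_t(f,f_0)$, so the tail must be controlled deterministically for all $(\bta,\bgm)$ in a set of sufficient prior mass (the paper's sets $D_n\subseteq B_n$ with the chi-squared tail bound on $\|\bta\|$), not absorbed into the outer probability over the data. Raising your threshold to $\|\bfx\|\le\sqrt{3}\,p_n$, using $\log|\Sigma_x|\ge -p_n\log p_n$ to control the normalizing constant of the Gaussian density on the tail, and restating the tail bound as a statement about prior mass in $(\bta,\bgm)$ closes the gap and recovers the paper's proof; Lemma \ref{lm:5}(b) is then needed only for the on-event part, to verify $(\bfx^{\prime}\bbta_0)^2/(p_n\, n\varepsilon_n^2)\to 0$ almost surely in the analogue of the Theorem \ref{thm:1} computation.
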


Before presenting the result for RIS-PCR we modify assumption (A3) as follows:

\noindent{\bf Assumption (A3$^\prime$)} ~ Let $X_{\bgm}$ be the sub-matrix of $X_n$
with columns corresponding to non-zero values of $\bgm$, and ${\bf x}_{\bgm}$ be a row of $X_{\bgm}$. Let $V_{\bgm}$ be the $m_n \times \pg$ matrix of $m_n$ eigenvectors corresponding to the first $m_n$ eigenvalues of $X_{\bgm}^{\prime} X_{\bgm}$.
Then, for each $\bgm$ and data point ${\bf x}_{\bgm}$,
\vskip5pt
{\centering $ \left. \| V_{\bgm} {\bf x}_{\bgm} \|^2 \right/\| {\bf x}_{\bgm} \|^2 \geq \alpha_n,$ \par}
\vskip5pt

\noindent where $\alpha_n \sim (n\varepsilon_n^2)^{-(1-b)}$ for some $b>0$ where the sequence $\{ \varepsilon_n^2\}$ is as in assumption (A2).

\vskip5pt

Finally we restate the result on RIS-PCR under Gaussian predictors as follows:
\begin{theorem}\label{thm:4}
	Let $\bta\sim N({\bf 0},\sigma_{\theta}^2 I)$, $f(y|{\bf x}, \bgm ,\Rg, \bta)$ be the conditional density of $y$ given the model induced by $\left(\bgm,\Rg\right)$, where $\Rg$ is as in (\ref{eq_rp2}) and (\ref{eq_rp16}), and $\bfx \sim N({\bf 0}, \Sigma_x)$ with $\Sigma_x$ satisfying (B1)-(B2). Let $\bbta_{0}$ be the true regression parameter with $\sum_j |\beta_{0,j} |<K$ for some constant $K$, and assumptions (A1), (A2),(A3$^\prime$) and (B3) hold. Assume further that the conditions (i)-(iii) of Theorem \ref{thm:2} hold. Then
	
	\vskip5pt
	{\centering $P_{f_0}\left[\pi\left\{d(f,f_0)> 4\varepsilon_n| \mathcal{D}^n \right\} > 2 e^{-n\varepsilon_n^2/4 } \right] \leq 2 e^{-n\varepsilon_n^2/5 },  $ \par}
	\vskip5pt
	
	\noindent as ~$\min\{m_n,p_n,n\}\rightarrow\infty$, where $\pi\{ \cdot | \mathcal{D}^n\}$ is the posterior measure.
\end{theorem}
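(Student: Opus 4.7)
The plan is to combine two earlier proofs in the paper: the proof of Theorem~\ref{thm:2} for RIS-PCR under bounded predictors, and the Gaussian-predictor machinery developed for Theorem~\ref{thm:3}. The overarching framework follows Jiang (2007): exhibit a sieve $\mathcal{P}_n$ of candidate densities for which (i) $\log N(\varepsilon_n,\mathcal{P}_n) \le n\varepsilon_n^2/4$, (ii) $\pi(\mathcal{P}_n^c) \le e^{-n\varepsilon_n^2/2}$, and (iii) $\pi\bigl(\{d_0(f,f_0) < \varepsilon_n^2/2\}\bigr) \ge e^{-n\varepsilon_n^2/4}$; standard Hellinger testing then converts these three items into the stated posterior bound.

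For (i)--(ii) I would take
\[
\mathcal{P}_n=\Bigl\{f(y\,|\,\bfx,\bgm,\Rg,\bta): \bgm\in\mathcal{A}_n,\ \|\bta\|\le\sigma_{\theta}\sqrt{6 n\varepsilon_n^2 p_n m_n}\Bigr\},
\]
with the norm bound matched to condition~(iii) of Theorem~\ref{thm:2}. Hellinger increments between the GLM densities reduce to increments of $D(\cdot)$ exactly as in the bounded-predictor proof, so condition~(iii) of the present theorem controls $\log N(\varepsilon_n,\mathcal{P}_n)$. Prior mass outside the $\bta$-ball is a Gaussian chi-square tail, and (A2) gives $\pi(\bgm\notin\mathcal{A}_n)\le e^{-n\varepsilon_n^2/4}$ almost surely under $f_0$. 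The only place where unboundedness of $\bfx$ enters these items is through $D(\cdot)$, and this is absorbed by (B3): since $g(u)\le e^{cu}$, all the exponential moments stay finite.

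The main obstacle is (iii). For fixed $\bgm\in\mathcal{A}_n$ I must construct $\bta_{*}$ such that $\bta_{*}^{\prime} V_{\bgm,m_n}\bfx_{\bgm}$ approximates $\bfx^{\prime}\bbta_{0}$ in $L^{2}(f_{0}\nu_{\bfx})$ to within $o(\varepsilon_n^2)$, with $\|\bta_{*}\|$ small enough that a Gaussian prior ball around it has mass at least $e^{-n\varepsilon_n^2/4}$. Decomposing $\bfx^{\prime}\bbta_{0}=\bfx_{\bgm}^{\prime}\bbta_{0,\bgm}+\bfx_{\overline{\bgm}}^{\prime}\bbta_{0,\overline{\bgm}}$, the off-support term is handled by the $\ell_{1}$ bound $\sum_j|\beta_{0,j}|<K$ combined with (A1) and the screening probabilities in (A2), while the on-support term is approximated through the PCR lift, with approximation error bounded by $\|(I-V_{\bgm,m_n}^{\prime}V_{\bgm,m_n})\bfx_{\bgm}\|^2 \le (1-\alpha_n)\|\bfx_{\bgm}\|^2$ from (A3$^{\prime}$). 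Lemma~\ref{lm:5}(b) supplies the almost-sure envelope $\|\bfx\|^2\le(n\varepsilon_n^2)^b p_n$; the weakening from $\alpha_n\sim(n\varepsilon_n^2)^{-1}$ in (A3) to $\alpha_n\sim(n\varepsilon_n^2)^{-(1-b)}$ in (A3$^{\prime}$) is exactly the slack required to absorb the factor $(n\varepsilon_n^2)^b$ from this envelope, and (B3) ensures that integration over $y$ on the complement of the envelope event contributes only $o(e^{-n\varepsilon_n^2})$ to $d_0$.

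Assembling the three ingredients and plugging into Jiang's testing lemma yields the claimed inequality. The delicate moment in the argument is the balance in step~(iii) between the PCR approximation loss (which demands $\alpha_n$ large) and the growth of the $L^{2}$ envelope $\|\bfx\|^2\sim p_n$ with sub-exponential tails; (A3$^{\prime}$) is essentially the minimal relaxation of (A3) that lets these two demands meet, which is why Theorem~\ref{thm:4} is not merely a mechanical corollary of Theorem~\ref{thm:2}.
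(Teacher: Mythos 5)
Your overall skeleton matches the paper's: conditions (a) and (b) of Jiang's lemma are inherited from the Theorem \ref{thm:2} argument, and condition (c) is proved by splitting $E_{\bf x}$ over the envelope event $\{\|\bfx\|<\sqrt{3}p_n\}$ and its complement, with Lemma \ref{lm:5}(b) and the extra exponent $b$ in (A3$^\prime$) absorbing the growth of $\|\bfx\|^2$, and with (B2)--(B3) killing the Gaussian tail term. But the core of your condition (iii) is wrong. You propose to construct $\bta_{*}$ so that $\bta_{*}^{\prime}V_{\bgm,m_n}\bfx_{\bgm}$ approximates $\bfx^{\prime}\bbta_0$ in $L^2$, with approximation error controlled by $\|(I-V_{\bgm,m_n}^{\prime}V_{\bgm,m_n})\bfx_{\bgm}\|^2\le(1-\alpha_n)\|\bfx_{\bgm}\|^2$. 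This misreads (A3$^\prime$): there $\alpha_n\sim(n\varepsilon_n^2)^{-(1-b)}\rightarrow 0$, so $(1-\alpha_n)\|\bfx_{\bgm}\|^2$ is essentially $\|\bfx_{\bgm}\|^2\sim p_n$ and the bound is vacuous --- nowhere near the $o(\varepsilon_n^2)$ you need. Making your construction work would require $\alpha_n$ close to $1$ (the top $m_n$ principal components capturing almost all of $\bfx_{\bgm}$'s energy), which is a far stronger hypothesis than the theorem assumes.

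The paper avoids any approximation property of the PCR projection. It uses the Gaussian anti-concentration bound of Lemma \ref{lm:3}: the prior on $\bta$ induces a $N(0,\sigma_{\theta}^2\|\Rg\bfx\|^2)$ law on $(\Rg\bfx)^{\prime}\bta$, and the probability that this lands within $\Delta_n$ of the arbitrary target $\bfx^{\prime}\bbta_0$ is bounded below in terms of $\|\Rg\bfx\|^2$ alone. Assumption (A3$^\prime$) enters only as the \emph{lower} bound $\|\Rg\bfx\|^2\ge\alpha_n\|\bfx_{\bgm}\|^2$ (with row-orthonormality giving the matching upper bound $\|\Rg\bfx\|^2\le\|\bfx_{\bgm}\|^2$), which keeps the induced Gaussian from degenerating so that the interval for $Z_{\bgm}$ in the analogue of (\ref{eq_rp10}) has its right endpoint of order $\alpha_n p_n n\varepsilon_n^2\ge p_n(n\varepsilon_n^2)^{b}$, large enough to contain $z_n$ almost surely by Lemma \ref{lm:5}(b). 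Two smaller issues: your sieve radius $\|\bta\|\le\sigma_{\theta}\sqrt{6n\varepsilon_n^2 p_n m_n}$ confuses the bound on the linear predictor $u=(\Rg\bfx)^{\prime}\bta$ (the argument of $D(\cdot)$) with the coordinatewise sieve radius $c_n=\sigma_{\theta}\sqrt{5n}\varepsilon_n$ actually used; and the tail integral over $\{\|\bfx\|>\sqrt{3}p_n\}$ relies essentially on the eigenvalue and determinant bounds in (B2), not only on (B3) as you suggest.
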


\section{Numerical Studies} \label{sec:4_0}
In this section we assess TARP using various simulated and real datasets in comparison with a variety of frequentist and Bayesian methods.  

\subsection{Simulation Studies}\label{sec:4}
We consider four different simulation schemes (\emph{Schemes I} -- \emph{IV}),
focusing mainly on high-dimensional and weakly sparse regression problems with a variety of correlation structures in the predictors.  The sample size is taken to be $200$ in each case, while $p_n$ varies. 

\vskip5pt
\noindent{\it Competitors.} 
The frequentist methods we compare with are: Smoothly clipped absolute deviation, \emph{SCAD}; One-Step SCAD (\cite{FXZ2014}) which is a local linear solution of the SCAD optimization problem,  \emph{1-SCAD}; minimax concave penalty, \emph{MCP}; least absolute shrinkage and selection operator, \emph{LASSO}; ridge regression, \emph{Ridge}; elastic net \citep{ENET},  \emph{EN}; principal component regression, \emph{PCR}; sparse PCR, \emph{SPCR} \citep{SPCR}; and robust PCR, \emph{RPCR} \citep{RPCA}. 
We consider three Bayesian methods, viz., Bayesian compressed regression (\emph{BCR}), Bayesian shrinking and diffusing priors (\emph{BASAD}, \cite{NH2014}) and spike and slab lasso (\emph{SSLASSO}, \cite{sslasso}). The details on specifications of the competitors are provided in SM Section \ref{sm:3.3}. 

\vskip5pt
\noindent{\it The proposed method.} For TARP, $m_n$ is chosen in $[2\log p_n,3n/4]$, and $\delta$ is set at $2$ as the function $\max\{0,(1+\log(p_n/n))/2\}$ is close to 2 for the choices of $(n,p_n)$. The parameters of the inverse gamma priors on $\sigma^2$ are set to $0.02$, to make it minimally informative.  

\vskip5pt
\noindent{\it Simulation Schemes.}
In the first three simulation schemes, the predictors are generated from $N({\bf 0},\Sigma)$, with different choices of $p_n$, $\Sigma$ and the regression coefficients. In \emph{Scheme IV} we consider a functional regression setup. In all cases (except in Scheme \emph{III}) we randomly choose $50$ covariates to be active with regression coefficient $1$ each. Different methods are compared with respect to their performance in out of sample prediction. We calculate mean square prediction error (MSPE), empirical coverage probability (ECP) and the width of $50\%$ prediction intervals (PI) for each of 100 replicates of the datasets.
For each of the schemes, we consider $2$ choices of $p_n$, and present results corresponding to the first choice here. Results corresponding to the other choice of $p_n$ are provided in the SM Section \ref{sm:3.4}.


\afterpage{
\begin{figure}
\centering     
\subfigure[{\it Scheme I}, $p_n=2\times 10^3$.]{\label{fig1}  \includegraphics[height=2.5 in, width=3.1 in]{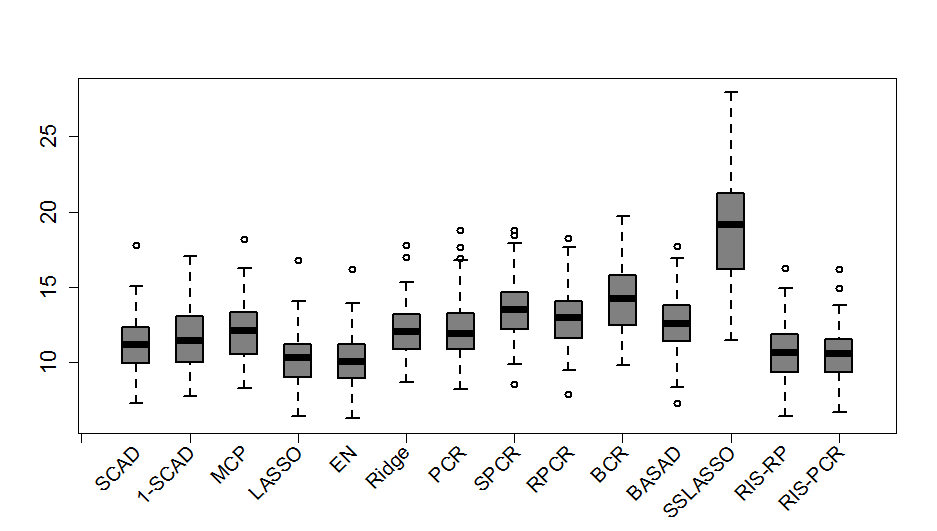}}
\subfigure[{\it Scheme II}, $p_n=10^4$.]{\label{fig2}  \includegraphics[height=2.5 in, width=3.1 in]{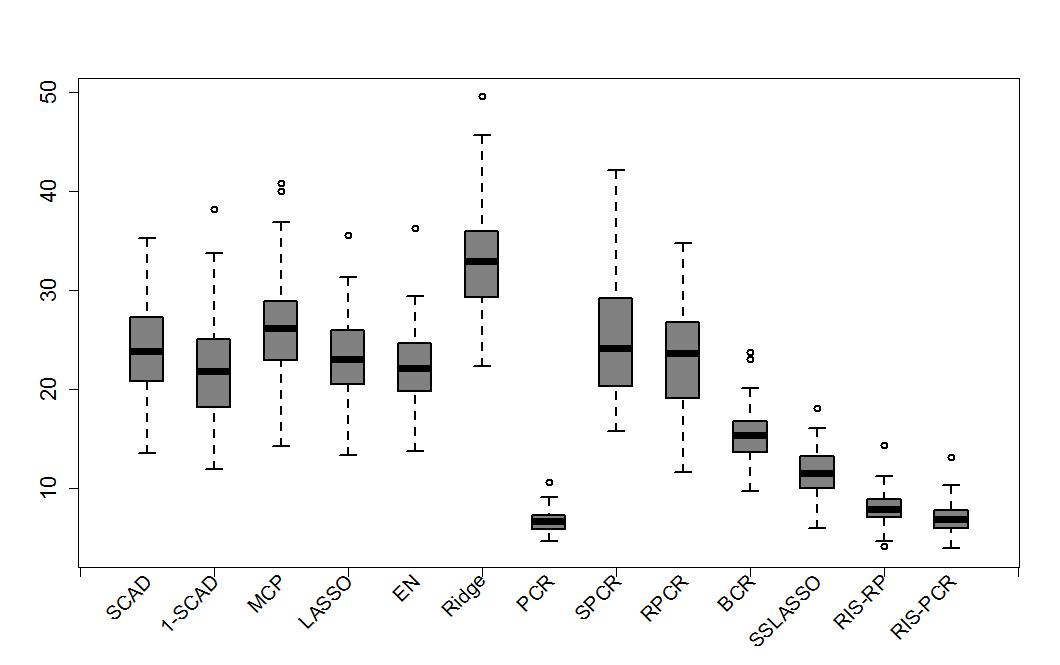}}
\caption{Box-plot of MSPEs for the competing methods in Schemes I and II. } 
\end{figure}
 }

\vskip5pt
\noindent{\it Scheme I: First order autoregressive structure.} $\Sigma_{i,j}=(0.3)^{|i-j|}$, $i,j=1,\ldots,p_n$, 
with $p_n \in \{2\times 10^3,3\times 10^3\}$. Fifty predictors are randomly chosen as active. Figures \ref{fig1} and SM \ref{fig-s1} show MSPE boxplots of the competing methods.

\vskip5pt
\noindent{\it Scheme II: Block diagonal covariance structure.} We choose $(p_n/100 -2)$ blocks of $100$ predictors each, along with $200$ independent predictors, with $p_n \in \{10^4,2\times 10^4\}$. The within-block correlation is $\rho$ and the across-block correlation is zero, with $\rho=0.3$ for half of the blocks and $\rho=0.9$ for the remaining half.  Among the $50$ active predictors, $49$ are chosen from blocks with $\rho=0.9$ and the remaining one is chosen from the independent block. Figures \ref{fig2} and SM \ref{fig-s2} show the results for {\it Scheme II}.

\vskip5pt
\noindent{\it Scheme III: Principal Component Regression.} We first choose a matrix $P$ with orthonormal columns, take $D=\mbox{diag}(15^2,10^2,7^2)$, and set $\Sigma=PDP^{\prime}$. We choose $\boldsymbol{\beta}=P_{\cdot,1}$, where $P_{\cdot,1}$ is the first column of $P$.  This produces an $X_n$ with three dominant principal components (PC), with the response $\yn$ dependent on the first PC and
$p_n \in \{10^4,5\times 10^4\}$. Further $5$ samples are perturbed to mimic outliers. For these samples, the regressors are chosen from independent $\mathrm{Normal}\left({\bf 0},10^2 I\right)$. Figures \ref{fig3}-\ref{fig4} and SM \ref{fig-s3}-\ref{fig-s4} show the results for {\it Scheme III}.

 \afterpage{
\begin{figure}
\centering     
\subfigure[MSPE of all the methods.]{\label{fig3}  \includegraphics[height=2.5 in, width=3.1 in]{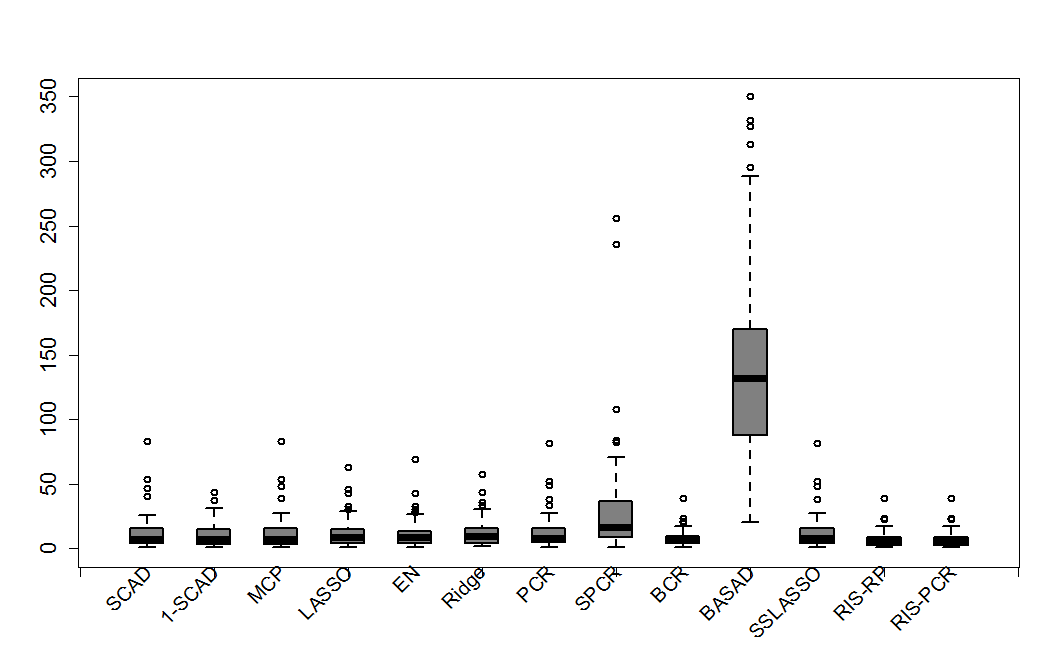}}
\subfigure[MSPE of selected methods.]{\label{fig4}  \includegraphics[height=2.5 in, width=3.1 in]{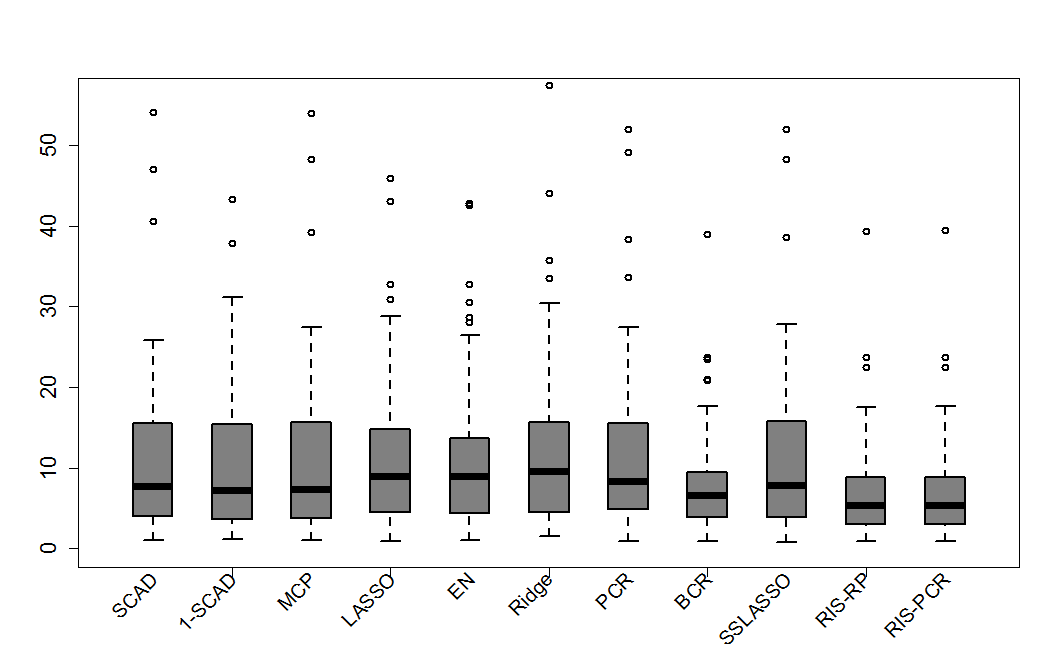}}
\caption{Box-plot of MSPEs for $p_n=5\times 10^3$ in \emph{Scheme III}. } 
\end{figure}
  }

\vskip5pt
\noindent{\it Scheme IV: Functional Regression.} Finally, we consider a functional regression setup, where the covariates are generated from Brownian bridge $B_t$ with $t\in (0,10)$ and values ranging from $(0,10)$. Choices of $p_n$ are $p_n=10^4$ and $2 \times 10^4$. Figures \ref{fig5}-\ref{fig6} and SM \ref{fig-s5}-\ref{fig-s6} show the results.

 \afterpage{
\begin{figure}
\centering     
\subfigure[MSPE of all the methods.]{\label{fig5}  \includegraphics[height=2.5 in, width=3.1 in]{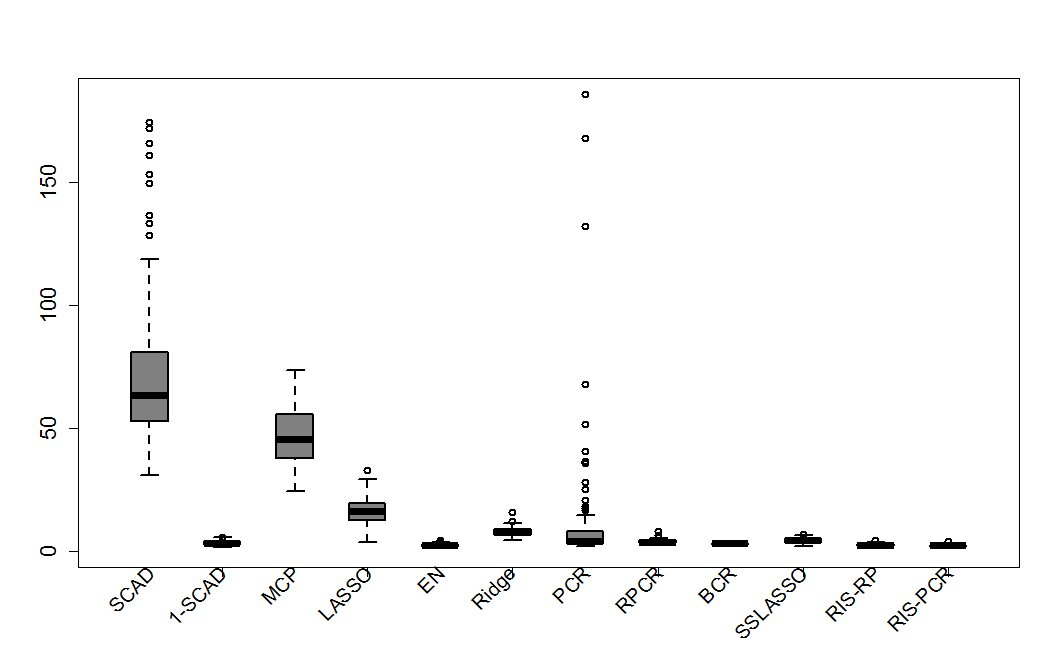}}
\subfigure[MSPE of selected methods.]{\label{fig6}  \includegraphics[height=2.5 in, width=3.1 in]{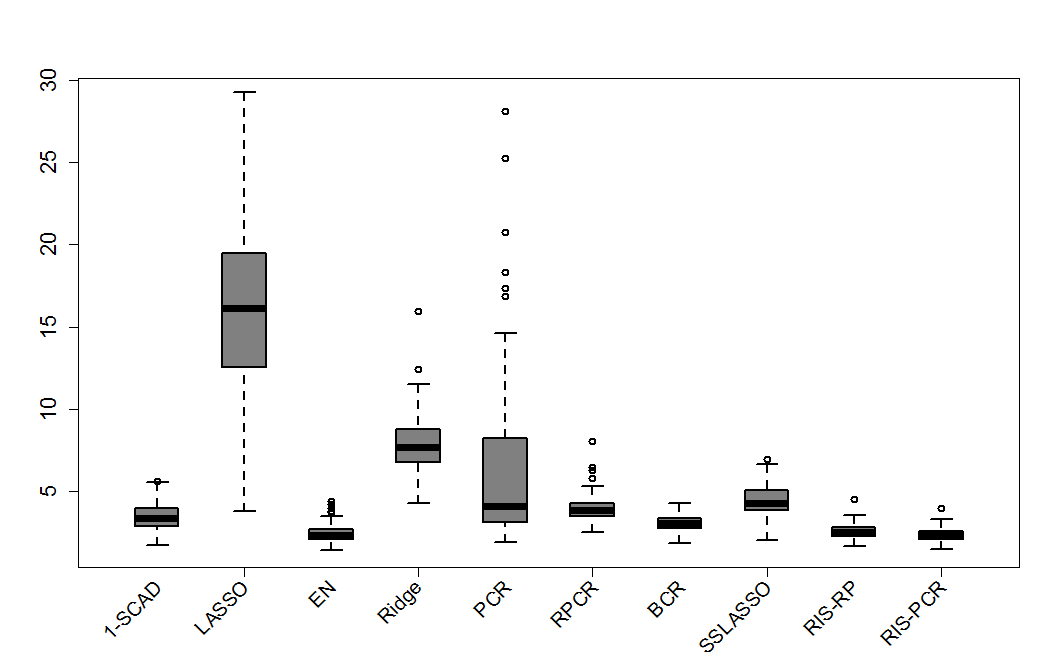}}
\caption{Box-plot of MSPEs for $p_n=10^4$ in \emph{Scheme IV}. } 
\end{figure}
 }

\vskip10pt

{\it Prediction Interval:} Empirical coverage probabilities (ECPs) and the width of $50\%$ prediction intervals (PI) of different methods are summarized in Tables \ref{tab1} and SM \ref{tab-s2}. The methods used to calculate PI for different competitors are described in SM Section \ref{sm:3.3}. 

In SM Section \ref{sm:3.5}, we discuss the simulation results in detail.

 \afterpage{
\begin{table*}[!h]
\renewcommand{\arraystretch}{1.2}
\begin{center}
{\scriptsize
\begin{threeparttable}
\caption{Mean and standard deviation (in brackets) of empirical coverage probabilities (ECP) and width of 50\% prediction intervals.}
\label{tab1}
\begin{tabular}{|p{1.5 cm}|p{1.4 cm} p{1.4 cm} | p{1.55 cm} p{1.4 cm} | p{1.4 cm} p{1.4 cm}| p{1.6 cm} p{1.6 cm} |}  \hline
Schemes $\rightarrow$  &  \multicolumn{2}{|c|}{\it I} & \multicolumn{2}{|c|}{\it II} & \multicolumn{2}{|c|}{\it III} & \multicolumn{2}{|c|}{\it IV} \\ 
$(n,p_n)$ &  \multicolumn{2}{|c|}{$(200,2\times 10^3)$} & \multicolumn{2}{|c|}{$(200,10^4)$} &  \multicolumn{2}{|c|}{$(200,5\times 10^3)$} &  \multicolumn{2}{|c|}{$(200,10^4)$}\\ 
  Methods $\downarrow$ &  ECP  &  Width &  ECP & Width &  ECP  & Width &  ECP & Width  \\ \hline
{\it SCAD}     & 48.8  (6.3) & 4.46  (0.41) 	& 49.8   (8.4) & 6.6  (0.87) 	& 15.2   (12.5) & 1.34   (0.06) 	& 41.2   (5.3) & 27.62    (2.47) \\
{\it 1-SCAD} & 49.7 (10.5) & 4.66 (0.93) & 49.16 (11.3)  & 6.26 (1.36)  & 17.5 (13.1)  & 1.40 (0.22) & 46.65 (10.1) & 2.36  (0.51) \\
{\it MCP}      	& 50.6   (7.2) & 4.75  (0.59) 	& 50.8   (9.8) & 6.96 (1.20) 	& 15.7   (12.8) & 1.35   (0.06) 	& 51.0   (5.8) & 11.42   (2.04) \\
{\it LASSO}   & 48.7   (8.1) & 4.23  (0.56) 	& 47.1   (8.6) & 6.27  (1.06) 	& 13.0   (12.4) & 1.40   (0.16) 	& 49.1  (6.8) & 8.97    ~~(1.06) \\
{\it EN}       	& 48.3   (7.3) & 4.21  (0.44) 	& 47.8   (7.9) & 6.15  (1.01) 	& 13.3  (12.9) & 1.40   (0.19) 	& 76.7   (8.4) & 3.83   ~(0.78) \\
{\it Ridge}  	& 48.0   (6.0) & 4.63  (0.37) 	& 46.1   (7.1) & 7.31  (0.92) 	& 12.3   (10.6) & 1.39   (0.15) 	& 76.3   (8.4) & 6.87   (0.91) \\
{\it PCR}    	& 38.9   (18.3) & 3.74  (2.04) 	& 47.1   (20.5) & 3.42  (1.67) 	& 15.8   (13.3) & 1.35   (0.06) 	& 44.9   (30.6)& 4.83    ~~(8.57) \\
{\it SPCR}   	& 50.4   (5.6) & 4.95  (0.26) 	& 49.6   (5.8) & 6.65  (0.72) 	& 13.3   (11.2) & 1.35   (0.08) 	& 49.5   (5.8) & 39.50~(15.69) \\
{\it RPCR}   	& 49.9   (5.5) & 4.81  (0.26) 	& 45.1   (4.8) & 5.83  (0.60) 	& \tnote{**} &     **           	& 49.6   (5.5) & 2.62   ~~(0.21)\\
{\it BCR}     	& 51.8   (6.0) & 5.30  (0.28) 	& 56.0   (5.4) & 6.01  (0.43) 	& 27.3   (11.8) & 2.02   (0.50) 	& 41.8   (4.8) & 1.92   ~~(0.11) \\
{\it BASAD} 	& 50.5   (5.8) & 4.92  (0.27) 	&  \tnote{*}   &    *         		& 14.2   (12.4) & 1.68   (2.35) 	&      *        & * \\
{\it SSLASSO}& 19.1  (4.1) & 2.12  (0.11) 	& 24.7   (4.9) & 2.08  (0.13) 	& 16.0   (13.2) & 1.37   (0.07) 	& 30.6   (5.2) & 1.64   ~~(0.10) \\
{\it RIS-RP}  	& 40.3   (6.2) & 3.47  (0.15) 	& 51.1   (5.7) & 3.82  (0.14) 	& 56.1   (17.5) & 4.52   (2.00) 	& 66.3   (5.6) & 3.06   ~~(0.22) \\
{\it RIS-PCR}& 33.2   (6.2) & 2.80  (0.22) 	& 34.9   (5.2) & 2.34  (0.20) 	& 25.0   (11.1) & 1.88   (0.56) 	& 40.6   (5.3) & 1.63   ~~(0.12) \\
\hline
\end{tabular}
\begin{tablenotes}
	\item [*] BASAD requires prohibitive computational time for $p_n=10^4$, and hence is removed from comparison
	\item [**] RPCR produces extremely high MSPE and width of PI for {\it Scheme III}, and hence is removed from comparison
	\end{tablenotes} 
\end{threeparttable}}
\end{center}
\end{table*}
 }
 
\vspace{-.05 in}
\subsection{Computational Time} \label{sec:4.1}
Computational time may depend on the simulation scheme due to varying level of complexity  in the dataset. We only present the computational time for \emph{Scheme IV} as an example. Figures \ref{fig11} and \ref{fig12} (in SM) present the time (in minutes) taken by different methods to compute $\hat{\bf y}_{new}$ using a single core, as $p_n$ grows and $n=n_{new}=100$. We run all the methods in R 3.5.1 in a 64 bit Windows 10 Pro desktop with 256 GB random access memory and Intel(R) Xeon(R) Gold 6140 CPU \@ 2.30GHz, 2295 Mhz, 18 Core(s) processor.

 We record the system time of SCAD reported by \emph{SIS} package, MCP by \emph{ncpen} package, LASSO by \emph{biglasso} and EN and Ridge by \emph{glmnet}.    
SIS package chooses $100$ grid values for $\lambda$ by default. For LASSO, EN and Ridge we consider a grid of $200$ different values of $\lambda$ to search from. For BASAD, we consider the \emph{alternative sampling} option which is recommended for large dimensional datasets. Other specifications are kept the same as in Section \ref{sec:4}. We report the computational time only if it is less than 10 hours. In SM Section \ref{sm:3.6}, we discuss the results in detail. 
\begin{figure}
\centering     
\includegraphics[scale=.52]{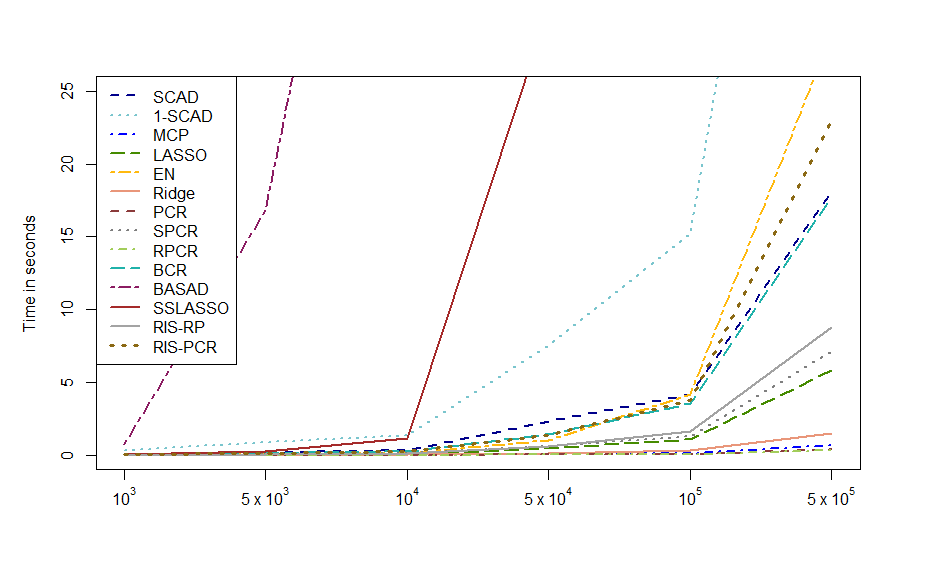}
\caption{Time required by different methods to predict $y$ as $p_n$ grows (magnified view). }
\label{fig11}
\end{figure}

 \subsection{Real Data Analysis} \label{sec:5}
We study the performance of TARP using $4$ real datasets, viz., \emph{Golub}, \emph{GTEx}, \emph{Eye} and \emph{GEUVADIS cis-eQTL} dataset. Results for the \emph{Golub} data are presented here. The other data descriptions, results and discussions are in Section \ref{sm:4} of SM. 
We assess out-of-sample predictive performance by averaging over multiple training-test splits of the data.
 
 \vskip5pt
 \noindent{\bf Golub data.} The Golub data (\url{https://github.com/ramhiser/datamicroarray/wiki}) consist of 47 patients with acute lymphoblastic leukemia (\emph{ALL}) and 25 patients with acute myeloid leukemia (\emph{AML}). Each of the 72 ($=n$) patients had bone marrow samples obtained at the time of diagnosis. Expression levels of 7129 ($=p_n$)  genes have been measured for each patient. We consider a training set of size $60$ with $20$ \emph{AML} patients, and $40$ \emph{ALL} patients. The test set consists of the remaining $12$ samples. 

As Golub data has nominal response, the methods are evaluated by the misclassification rate (in \%) and the area under the receiver operating characteristic (ROC) curve. Table \ref{tab2} provides the average and standard deviation (sd) of percentages of misclassifications, and those for the area under the ROC curve over 100 random subsets of the same size for the competing methods. The results are discussed in Section \ref{sm:4.0}.

We further compare predictive performance of the methods in terms of mean squared differences of predictive and empirical probabilities for Golub and GTEx datasets. 
We provide the details of the predictive calibration in Section \ref{sm:4.2}.

 \afterpage{
\begin{table*}[!h]
\renewcommand{\arraystretch}{1.5}
\begin{center}
{\scriptsize
\caption{Mean and standard deviation (in brackets) of percentage of misclassification and area under the ROC curve for \emph{Golub} dataset.}
\label{tab2}
\begin{tabular}{|p{2 cm}|p{.7 cm} p{.7 cm}   p{.7 cm}  p{0.7 cm} p{0.7 cm} p{0.7 cm} p{0.7 cm} p{0.7 cm} p{0.7 cm} p{0.7 cm} p{0.7 cm} p{0.7 cm}|}  \hline
Methods & \quad SCAD & 1-SCAD &  \quad MCP & \quad LASSO & \quad EN & \quad Ridge & \quad PCR &\qquad SPCR & \qquad RPCR & \quad BCR &  RIS-RP & RIS-PCR  \\ \hline
Misclassification Rate (in \%)  & 11.91 (7.24) & 45.45 (0.00) &  12.91 (7.36) & 13.04 (7.07) & 10.18 (7.11) & 8.73 (7.21) & 8.41 (5.93) & 42.14 (13.22)   & 10.13 (7.22)& 19.45 (9.56)  &  5.45 (4.14) & 5.59 (4.23) \\
\hline
Area under ROC curve  & 0.871 (.078)  & 0.500 (.000)&  0.864 (.079) & 0.858 (.078) & 0.890 (.078) & 0.904 (.079) & 0.908 (.065) & 0.577 (.133) & 0.891 (.079)&  0.817 (.093) &  0.985 (.021) & 0.946 (.041)  \\ \hline
\end{tabular}}
\end{center}
\end{table*}}

The Bayesian methods SSLASSO and BASAD either do not have codes available for binary responses (SSLASSO) or the code crashes for this dataset (BASAD).
For TARP based methods, we used correlation coefficient as the marginal utility measure to save computational time. 
For binary response data, marginal correlation coefficient ($r$) of a standardized predictor is equivalent to the $t$-test statistic of difference of means between groups. Therefore $r$ is also a natural measure of utility. 


\subsection{Discussion of Results}\label{sec:4.3} From the results it is evident that TARP can be conveniently used in ultrahigh-dimensional prediction problems, for e.g., the GEUVADIS cis-eQTL dataset we analyze has $\sim 8$ million predictors (see SM Section \ref{sm:4.3}). For highly correlated datasets, where most of the penalization methods fail (Schemes II and IV), TARP yields much better out-of-sample predictive performance. Another advantage of TARP over other methods is stability, and robustness in the presence of outliers (see, for e.g., results of Scheme III). The interquartile ranges of MSPEs, and widths of $50\%$ PIs, are low for TARP in all the cases. RIS-RP yields better ECP than RIS-PCR in general, and shows overall good predictive performance in the simulated and real datasets (see, in particular, results of Scheme III and Eye data).

TARP enjoys better classification performance as well, with lower misclassification rates and higher area under the ROC curve (see Table \ref{tab2} and SM Table \ref{tab:sm2}). Computational time of TARP is comparable to many frequentist methods even without using parallelization or a lower-level language such as $\mathrm{C++}$. TARP significantly outperforms popular frequentist and Bayesian methods in terms of computational time when $p_n$ is large (see Figures \ref{fig11} and \ref{fig12}). RIS-RP is much faster than RIS-PCR. The difference is due to the computation of the exact SVD of $X_{\bgm}$ involved in RIS-PCR. However, this burden would immediately be reduced if one uses a time-efficient approximation of the SVD. 

 \section{Appendix}\label{sec:7}
 This section contains proofs of Theorems \ref{thm:1} and \ref{thm:3}. In proofs of all the results, unless value of a constant is important we use a generic notation $c$ for the constants, although all of them may not be equal.
 \subsection{Some Useful Results}
 \begin{lemma}\label{lm:1}
 	Let  $\varepsilon_n$ be a sequence of positive numbers such that $n\varepsilon_n^2\succ 1$. Then under conditions
 	
 	\noindent a.~ $ln N(\varepsilon_n,\mathcal{P}_n)\leq n\varepsilon_n^2$ for all sufficiently large $n$,
 	
 	\noindent b. ~$\pi \left( \mathcal{P}_n^c \right)\leq e^{-2n\varepsilon_n^2}$ for all sufficiently large $n$, and 
 	
 \noindent	c.~ $\pi\left\{ f : d_t(f,f_0) \leq \varepsilon_n^2/4\right\} \geq \exp\{-n\varepsilon_n^2/4\}$ for all sufficiently large $n$ and for some $t>0$,
 	
 	\qquad $P_{f_0}\left[\pi\left\{d(f,f_0)> 4\varepsilon_n| (\yn,{\bf X})\right\} > 2 e^{-n\varepsilon_n^2\left(0.5\wedge (t/4)\right) } \right] \leq 2 e^{-n\varepsilon_n^2\left(0.5\wedge (t/4)\right) }.  $
 \end{lemma}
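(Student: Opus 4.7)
\noindent The statement is the standard Ghosal--Ghosh--van der Vaart / Jiang-type posterior contraction theorem, so my plan is to follow the usual template: write the posterior as a ratio and control numerator and denominator separately on a high-probability event. Set $A_n = \{f : d(f, f_0) > 4\varepsilon_n\}$ and $L_n(f) = \prod_{i=1}^n f(y_i \mid x_i)/f_0(y_i \mid x_i)$, so that $\pi(A_n \mid \mathcal{D}^n) = N_n/D_n$ with $N_n = \int_{A_n} L_n\, d\pi$ and $D_n = \int L_n\, d\pi$. Hypotheses (a)--(c) will be used to control the entropy of the sieve, its prior complement, and the prior mass of a $d_t$-type neighborhood of $f_0$, respectively; each gives one good event, and a union bound over the four resulting events delivers the claim.

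\smallskip
\noindent\textit{Denominator via (c).} Put $B_n = \{f : d_t(f, f_0) \leq \varepsilon_n^2/4\}$, so (c) gives $\pi(B_n) \geq e^{-n\varepsilon_n^2/4}$. For $f \in B_n$, the definition of $d_t$ yields $E_{f_0}(f_0/f)^t \leq 1 + t\varepsilon_n^2/4 \leq e^{t\varepsilon_n^2/4}$, and by independence of the sample $E_{f_0} L_n(f)^{-t} \leq e^{n t \varepsilon_n^2/4}$. Jensen's inequality applied to the convex map $x \mapsto x^{-t}$ with the normalized restriction of $\pi$ to $B_n$ gives
\[
\bigl( D_n / \pi(B_n) \bigr)^{-t} \;\leq\; \int_{B_n} L_n(f)^{-t}\, \frac{d\pi(f)}{\pi(B_n)},
\]
and taking $E_{f_0}$ via Fubini followed by Markov yields $D_n \geq \pi(B_n)\, e^{-n\varepsilon_n^2/2} \geq e^{-3 n \varepsilon_n^2/4}$ outside a $P_{f_0}$-event of probability at most $e^{-n t \varepsilon_n^2/4}$.

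\smallskip
\noindent\textit{Numerator via (a) and (b).} Split $N_n = N_n^{(1)} + N_n^{(2)}$ with $N_n^{(1)} = \int_{A_n \cap \mathcal{P}_n^c} L_n\, d\pi$ and $N_n^{(2)} = \int_{A_n \cap \mathcal{P}_n} L_n\, d\pi$. Since $E_{f_0} L_n(f) = 1$, Fubini plus (b) give $E_{f_0} N_n^{(1)} \leq \pi(\mathcal{P}_n^c) \leq e^{-2 n \varepsilon_n^2}$, so Markov bounds $N_n^{(1)}$ by $e^{-n\varepsilon_n^2}$ off an event of probability $\leq e^{-n\varepsilon_n^2}$. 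For $N_n^{(2)}$ I invoke the standard LeCam--Birg\'e exponential-test lemma: for each $f_1$ with $d(f_1, f_0) > 4\varepsilon_n$ there is a test with type-I and type-II errors at most $e^{-c n \varepsilon_n^2}$ uniformly over a Hellinger $\varepsilon_n$-ball around $f_1$; covering $\mathcal{P}_n$ by $N(\varepsilon_n, \mathcal{P}_n)$ such balls and maximizing yields a global test $\phi_n$ with $E_{f_0}\phi_n \leq N(\varepsilon_n, \mathcal{P}_n)\, e^{-c n \varepsilon_n^2}$ and $\sup_{f \in A_n \cap \mathcal{P}_n} E_f(1-\phi_n) \leq e^{-c n \varepsilon_n^2}$. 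Hypothesis (a) then makes the type-I error at most $e^{-(c-1) n \varepsilon_n^2}$, and a Fubini--Markov argument on $(1-\phi_n) N_n^{(2)} = \int_{A_n \cap \mathcal{P}_n} (1-\phi_n) L_n\, d\pi$, whose $P_{f_0}$-mean equals $\int_{A_n \cap \mathcal{P}_n} E_f(1-\phi_n)\, d\pi \leq e^{-c n \varepsilon_n^2}$, bounds $N_n^{(2)}$ by $e^{-n\varepsilon_n^2/2}$ off an event of probability $\leq e^{-n\varepsilon_n^2/2}$, after absorbing $\{\phi_n = 1\}$ into the exceptional set.

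\smallskip
\noindent\textit{Combining and the main obstacle.} A union bound over the four exceptional events puts the total $P_{f_0}$-probability at most $2 e^{-n \varepsilon_n^2 (0.5 \wedge t/4)}$, and on its complement $N_n/D_n \leq 2 e^{-n \varepsilon_n^2 (0.5 \wedge t/4)}$ once the exponents are matched. All mechanical ingredients --- Jensen, Fubini, Markov, and the classical exponential-test construction --- are standard; the delicate part is the bookkeeping of constants. In particular, the separation $d > 4\varepsilon_n$ (rather than merely $d > 2\varepsilon_n$) in the definition of $A_n$ is what generates a test rate $c$ large enough for the resulting upper bound on $N_n$ to dominate the lower bound $e^{-3 n \varepsilon_n^2/4}$ on $D_n$ by the required margin $0.5 \wedge (t/4)$; the $t/4$ arm arises from the $d_t$ control in step (c), and the $0.5$ arm from the $\tfrac12$-split used when Markov is applied to the test-weighted integral in step (a).
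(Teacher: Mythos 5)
The paper does not actually prove this lemma: it states it and writes ``The proof is given in Jiang (2007),'' so there is no in-paper argument to compare against. Your reconstruction follows exactly the route that the cited source (and the Ghosal--Ghosh--van der Vaart tradition it adapts) takes: write $\pi(A_n\mid\mathcal{D}^n)=N_n/D_n$, lower-bound $D_n$ via Jensen--Fubini--Markov using the $d_t$-neighborhood prior mass in (c) (this is where the $t/4$ arm of the exponent originates), kill $N_n$ over $\mathcal{P}_n^c$ by Fubini--Markov using (b), and kill $N_n$ over $\mathcal{P}_n\cap A_n$ by a maximum of LeCam--Birg\'e Hellinger tests whose number is controlled by the entropy bound (a). That is the right skeleton and all the individual ingredients are sound. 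The one concrete place where your stated thresholds do not close is the $N_n^{(1)}$ step: bounding $N_n^{(1)}$ by $e^{-n\varepsilon_n^2}$ and dividing by $D_n\ge e^{-3n\varepsilon_n^2/4}$ yields only $e^{-n\varepsilon_n^2/4}$, which falls short of the required $e^{-n\varepsilon_n^2(0.5\wedge(t/4))}$ whenever $t>1$; the same issue arises for your $e^{-n\varepsilon_n^2/2}$ threshold on the test-weighted integral. This is a tuning problem, not a structural one --- taking the Markov threshold for $N_n^{(1)}$ (and for $(1-\phi_n)N_n^{(2)}$) to be $e^{-5n\varepsilon_n^2/4}$ gives exceptional probabilities of order $e^{-3n\varepsilon_n^2/4}$, which for large $n$ are absorbed into the single $e^{-n\varepsilon_n^2/2}$ budget, and then $N_n/D_n\le 2e^{-n\varepsilon_n^2(0.5\wedge(t/4))}$ follows. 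With that repair your argument is a complete and correct proof of the lemma the paper imports by citation.
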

 {\noindent The proof is given in \cite{Jiang_2007}.}
 
 \begin{lemma}\label{lm:2}
 	Let assumption (A1) hold and ${\bf x}$ be a $p_n\times 1$ sample vector of the regressors. Then the following holds as $\min\left\{ n,m_n,p_n \right\}\rightarrow\infty$:
 	
 	a.~ The random matrix $\Rg$ described in (\ref{eq_rp2}) and (\ref{eq_rp18}) satisfies
 	~~$ \|\Rg {\bf x}\|^2 /(m_n p_n ) \xrightarrow{p} c\ad. $
 	
 	b.~ Let $\|{\bf x}_{\bgm}\|^2=\sum_{j=1}^{p_n} x_j^2 \gamma_j$ where $\gamma_j$ is the $j^{th}$ element of the vector $\bgm$, then 
 	
 	$  \| {\bf x}_{\bgm}\|^2 /p_n \xrightarrow{p} c\ad, $~
 	where $c$ is the proportionality constant in (\ref{eq_rp2}).
 \end{lemma}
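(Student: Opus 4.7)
\textbf{Plan for Lemma \ref{lm:2}.} The strategy is to compute conditional means and variances under the two layers of randomness, $\bgm$ and $R_n^*$, and then apply Chebyshev's inequality. Throughout I use $|x_j|<M$ from Section 3.1 and the fact that, by construction, $q_j=c|r_{x_j,y}|^\delta$ with $c$ the proportionality constant in (\ref{eq_rp2}).

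\textbf{Part (b).} First I would condition on ${\bf x}$ and treat the $\gamma_j$ as independent Bernoulli$(q_j)$. The mean is immediate:
\[
E\bigl[\|{\bf x}_{\bgm}\|^2 \,\big|\, {\bf x}\bigr] \;=\; \sum_{j=1}^{p_n} x_j^2 q_j \;=\; c\sum_{j=1}^{p_n} x_j^2 |r_{x_j,y}|^{\delta},
\]
which by assumption (A1) is $c\alpha_\delta p_n(1+o(1))$. For the variance, independence of the $\gamma_j$ gives $\mathrm{Var}(\|{\bf x}_\bgm\|^2 \mid {\bf x}) = \sum_j x_j^4 q_j(1-q_j) \le M^2 \sum_j x_j^2 q_j = O(p_n)$, using the bound $x_j^2\le M^2$. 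Dividing by $p_n^2$ makes the variance $o(1)$, and Chebyshev gives the claimed convergence in probability.

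\textbf{Part (a).} Here I would peel the randomness one layer at a time. Writing $(\Rg{\bf x})_k = U_k := \sum_j R^*_{k,j}\gamma_j x_j$ and using that the $U_k$ are i.i.d.\ given $(\bgm,{\bf x})$ with mean zero and variance $\sum_j x_j^2\gamma_j = \|{\bf x}_\bgm\|^2$ (since $E(R^*_{k,j})=0$ and $E(R^{*2}_{k,j})=1$ for the three-point law in (\ref{eq_rp18})), one gets
\[
E\bigl[\|\Rg{\bf x}\|^2 \,\big|\, \bgm,{\bf x}\bigr] \;=\; m_n \|{\bf x}_\bgm\|^2.
\]
Taking the outer expectation over $\bgm$ and invoking (A1) gives mean $\sim c\alpha_\delta m_n p_n$. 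For the variance I would use the law of total variance: the ``between'' component is $m_n^2 \mathrm{Var}(\|{\bf x}_\bgm\|^2\mid {\bf x}) = O(m_n^2 p_n)$ by Part (b); the ``within'' component equals $m_n\,\mathrm{Var}(U_1^2\mid\bgm,{\bf x})$, which I would bound by computing $E[U_1^4\mid\bgm,{\bf x}]$ through the standard four-index expansion, keeping only the two non-vanishing index patterns (all four equal, or two equal pairs). The fourth-moment constant $E[R^{*4}_{k,j}] = 1/(2\psi)$ appears, and after subtracting $(E[U_1^2])^2$ one is left with $O(\sum_j x_j^4\gamma_j) + O((\sum_j x_j^2\gamma_j)^2) = O(p_n^2)$, giving a within-variance contribution of $O(m_n p_n^2)$. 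Hence $\mathrm{Var}(\|\Rg{\bf x}\|^2/(m_n p_n)\mid {\bf x}) = O(1/p_n) + O(1/m_n) = o(1)$, and Chebyshev finishes the argument.

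\textbf{Main obstacle.} The only non-routine step is the fourth moment $E[U_1^4\mid\bgm,{\bf x}]$ in part (a); I have to be careful that cross terms between the Rademacher-type variables $R^*_{k,j}$ and the Bernoulli selectors $\gamma_j$ do not produce an extra factor of $p_n$ that would spoil the bound. Because the $R^*_{k,j}$ are centered and independent across $j$, only paired indices contribute, and the resulting $O(p_n^2)$ bound goes through using $|x_j|<M$. A minor additional point is that (A1) is stated as a deterministic limit on each data point, so the argument is pointwise in ${\bf x}$ and no further uniformity is needed.
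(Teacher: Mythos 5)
Your proposal is correct and follows essentially the same route as the paper's own proof: both parts condition first on $\bgm$ (and then on the entries of $R_n^*$), use the law of total variance with the explicit second and fourth moments $E[R^{*2}_{k,j}]=1$ and $E[R^{*4}_{k,j}]=1/(2\psi)$ to get a between-component of order $m_n^2 p_n$ and a within-component of order $m_n p_n^2$, and conclude by Chebyshev together with assumption (A1). The only cosmetic difference is that the paper normalizes $M=1$ at the outset and isolates the four-index moment expansion in a separate auxiliary result.
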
 

 \begin{lemma}\label{lm:3}
 	Under the setup of Theorem \ref{thm:1}, suppose $\bta \sim N(0,\sigma_{\theta}^2 I)$, then for a given $\Rg$, ${\bf x}$ and $y$ the following holds
 	$$P(|(\Rg {\bf  x})^{\prime} \bta - {\bf x}^{\prime} \bbta_0 |< \Delta)> \exp \left\{ - \frac{ ({\bf x}^{\prime} \bbta_0)^2 +\Delta^2 }{\sigma_{\theta}^2 \|\Rg {\bf x}\|^2 } \right\} \frac{2^4 \Delta^4}{\sigma_{\theta}^2 \|\Rg {\bf x}\|^2}.$$ 
 \end{lemma}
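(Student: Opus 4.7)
The plan is to exploit the fact that, conditional on $\Rg$ and $\bfx$, the scalar $(\Rg\bfx)^{\prime}\bta$ is simply a linear functional of the Gaussian vector $\bta\sim N(\mathbf{0},\sigma_{\theta}^{2}I)$. Hence $(\Rg\bfx)^{\prime}\bta\sim N\!\left(0,\,\sigma_{\theta}^{2}\|\Rg\bfx\|^{2}\right)$. Writing $\mu=\bfx^{\prime}\bbta_{0}$ and $s^{2}=\sigma_{\theta}^{2}\|\Rg\bfx\|^{2}$, the target probability becomes the Gaussian mass on the interval $(\mu-\Delta,\,\mu+\Delta)$, i.e. an explicit one-dimensional integral of a centered Gaussian density. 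This reduction is the conceptual heart of the proof; everything afterwards is elementary estimation.

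Next, I would lower-bound the integrand uniformly over $(\mu-\Delta,\mu+\Delta)$ by its minimum. The $N(0,s^{2})$ density is unimodal at the origin, so its minimum on this interval is attained at the endpoint farthest from zero, namely at $|\mu|+\Delta$, giving density at least $(2\pi s^{2})^{-1/2}\exp\!\bigl(-(|\mu|+\Delta)^{2}/(2s^{2})\bigr)$. Multiplying by the interval length $2\Delta$ yields the crude but serviceable bound
\[
P\bigl(|(\Rg\bfx)^{\prime}\bta-\mu|<\Delta\bigr)\;\geq\;\frac{2\Delta}{\sqrt{2\pi s^{2}}}\,\exp\!\left(-\frac{(|\mu|+\Delta)^{2}}{2s^{2}}\right).
\]
Then I apply AM--GM, $2|\mu|\Delta\leq\mu^{2}+\Delta^{2}$, which gives $(|\mu|+\Delta)^{2}\leq 2(\mu^{2}+\Delta^{2})$, and the exponent collapses to exactly the form $-\bigl((\bfx^{\prime}\bbta_{0})^{2}+\Delta^{2}\bigr)/(\sigma_{\theta}^{2}\|\Rg\bfx\|^{2})$ stated in the lemma.

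The final cosmetic step is to match the stated polynomial prefactor $2^{4}\Delta^{4}/(\sigma_{\theta}^{2}\|\Rg\bfx\|^{2})$ rather than the sharper $2\Delta/\sqrt{2\pi s^{2}}$ that the above argument actually produces. Since the lemma is deployed in Theorem~\ref{thm:1} with $\Delta$ of order $\varepsilon_{n}$ and $\|\Rg\bfx\|^{2}$ of order $m_{n}p_{n}$ (by Lemma~\ref{lm:2}), the ratio $\Delta^{3}/s$ is vanishingly small in the regime of interest, so the sharper linear-in-$\Delta$ bound dominates the stated $\Delta^{4}/s^{2}$ bound and the latter follows. The main obstacle here is purely bookkeeping: one must verify that the constants $2^{4}$ and the $\Delta^{4}/s^{2}$ scaling are consistent with the asymptotic regime used later (rather than giving a vacuous inequality), so the trade from the natural sharp prefactor to the weaker stated form is justified.

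Beyond this constant tracking, nothing in the proof is delicate: the argument is a one-line Gaussian density estimate plus an AM--GM simplification of the exponent, and the lemma's role downstream is to supply the small-ball prior mass condition (c) of Lemma~\ref{lm:1} for the compressed likelihood, so the crude polynomial prefactor is exactly the form one wants in order to take logarithms and compare with $n\varepsilon_{n}^{2}$ in conditions (i)--(iii) of Theorem~\ref{thm:1}.
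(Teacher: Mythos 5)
The paper does not actually prove Lemma \ref{lm:3}; it defers entirely to \cite{GD_2015}, so there is no in-paper argument to compare against. Your self-contained derivation is sound and is the natural one: conditionally on $\Rg$ and ${\bf x}$, $(\Rg{\bf x})^{\prime}\bta\sim N(0,s^2)$ with $s^2=\sigma_{\theta}^2\|\Rg{\bf x}\|^2$, the interval mass is at least $2\Delta$ times the minimal density on $(\mu-\Delta,\mu+\Delta)$, attained at distance $|\mu|+\Delta$ from the origin, and $(|\mu|+\Delta)^2\leq 2(\mu^2+\Delta^2)$ turns the exponent into exactly the stated one. This yields $P\geq \frac{2\Delta}{\sqrt{2\pi}\,s}\exp\{-(\mu^2+\Delta^2)/s^2\}$, which is generically sharper than the claimed $\frac{2^4\Delta^4}{s^2}\exp\{-(\mu^2+\Delta^2)/s^2\}$.

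One point deserves emphasis, and you half-make it: the lemma as literally stated cannot be proved unconditionally, because it is false for some $(\Delta,\|\Rg{\bf x}\|)$. Take ${\bf x}^{\prime}\bbta_0=0$, $\sigma_{\theta}^2\|\Rg{\bf x}\|^2=1$ and $\Delta=1$; the right-hand side is $16e^{-1}\approx 5.9$, which exceeds any probability. Your bound implies the stated one precisely when $s\geq 8\sqrt{2\pi}\,\Delta^3$, and you correctly observe that this holds in the only regime where the lemma is invoked (in checking condition (c) of Lemma \ref{lm:1}, $\Delta=\Delta_n=\varepsilon_n^2/(4M)$ is bounded while $\|\Rg{\bf x}\|^2/(m_np_n)\xrightarrow{p}c\ad$ by Lemma \ref{lm:2}, so $s\rightarrow\infty$). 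It would strengthen your write-up to state this restriction as an explicit hypothesis rather than as ``bookkeeping,'' since strictly speaking you are proving a corrected version of the lemma, not the lemma verbatim; note also that in the proof of Theorem \ref{thm:1} the bound is applied inside an expectation over $\bgm$, so one should confirm the restriction holds for the realizations of $\bgm$ that carry the probability mass (which Assumption (A2) and Lemma \ref{lm:2} ensure). The specific $\Delta^4/s^2$ prefactor is a structural convenience for the downstream manipulation into the $\frac{Z_{\bgm}}{m_np_n}\exp\{-\frac{Z_{\bgm}}{m_np_n}\}$ form in (\ref{eq_rp11}); your $\Delta/s$ prefactor would work there too with minor changes to the choice of $a$ and $b$ in (\ref{eq_rp10}).
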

 {\noindent The proof is given in \cite{GD_2015}.}

 \subsection{ Proof of the Theorem \ref{thm:1}}
 Without loss of generality we consider $|x_j|<M$ with $M=1$, $j=1,2,\ldots,p_n$ and $d(\sigma^2)=1$, although the proofs go through for any fixed value of $M$ and $\sigma^2$.
 
 \begin{proof}

 	Define the sequence of events 
 	$\mathcal{B}_n= \left\{ \pi\left\{d(f,f_0)> 4\varepsilon_n| (\yn,{\bf X})\right\} > 2 e^{-n\varepsilon_n^2/4 }  \right\}$ and we need to show $P \left( \mathcal{B}_n^c \right) > 1- 2 e^{-n\varepsilon^2/5}$.
 	We first consider the sequence of events $\mathcal{A}_n$ in assumption (A2), and show that $P \left( \mathcal{B}_n^c | \mathcal{A}_n \right) > 1- 2 e^{-n\varepsilon^2/4}$. The proof then follows from assumption (A2) for moderately large $n$.

 	The proof of $P \left( \mathcal{B}_n^c | \mathcal{A}_n \right) > 1- 2 e^{-n\varepsilon^2/4}$ hinges on showing the three conditions of Lemma \ref{lm:1} for the approximating distribution 
 	\begin{eqnarray}
 	f(y)=\exp\{y a(h)+b(h)+c(y)\}\mbox{ with }h=(\Rg {\bf x})^{\prime} \bta, \label{eq_rp8}  
 	\end{eqnarray}
 	and the true distribution $f_0$, where $\Rg$ is as given in (\ref{eq_rp16}).
 	
 	\vskip5pt
 	\noindent{\it Checking condition (a).}
 	Let $\mathcal{P}_n$ be the set of densities $f(y)$ stated above with parameter $|\theta_j|<c_n$, $j=1,2,\ldots,m_n$, where $\{c_n\}=\{\sigma_{\theta} \sqrt{5n} \varepsilon_n\}$ and $\bgm$ is such that $\bgm \in \mathcal{M}_k$, for some $k\in \{0,1, \ldots, p_n\}$, given $\mathcal{A}_n$. 
 	For any $\bgm$ the corresponding set of regression parameters can be covered by $\mathit{l}_{\infty}$ balls of the form $B=(v_j-\epsilon,v_j+\epsilon)_{j=1}^{m_n}$ of radius $\epsilon>0$ and center $v_j$. It takes $(c_n/\epsilon+1)^{m_n}$ balls to cover the parameter space for each model $\bgm$ in $\mathcal{P}_n$. There are at most $\min\{ \binom{p_n}{k}, p_n^{k_n}\}$ models for each $\bgm$ under consideration as we are only concerned with models in $\mathcal{A}_n$ (see assumption (A2)), and there are $(p_n+1)$ possible choices of $k$. Hence it requires at most $ N(\epsilon,k)\leq c p_n^{k_n+1}  (c_n/\epsilon+1)^{m_n}$ $\mathit{l}_\infty$ balls to cover the space of regression parameters $\mathcal{P}_n$, for some constant $c$.  
 	
 	Next we find the number of Hellinger balls required to cover $\mathcal{P}_n$. We first consider the KL distance between $f$ and $f_0$, then use the fact $d(f,f_0) \leq \left( d_0(f,f_0) \right)^{1/2}$. 
 	Given any density in $\mathcal{P}_n$, it can be represented by a set of regression parameters $(u_j)_{j=1}^{m_n}$ falling in one of these $N(\epsilon,k)$ balls $B=(v_j-\epsilon,v_j+\epsilon)_{j=1}^{m_n}$ and $p_{\bgm}=k$. More specifically, let $f_u$ and $f_v$ be two densities in $\mathcal{P}_n$ of the form (\ref{eq_rp8}), where $u=(\Rg {\bf x})^{\prime} \bta_1$, $v=(\Rg {\bf x})^{\prime} \bta_2$ with $|\theta_{i,j}|<c_n$, $i=1,2$ and $p_{\bgm}=k$, then
 	\begin{eqnarray}
 	d_0(f_u,f_v) &=& \int \int  f_v \log \left( \frac{f_v}{f_u} \right) \nu_y (dy) \nu_{\bf x}(d{\bf x}) \notag \\
 	&=& \int \int \left\{ y(a(u)-a(v)) + (b(u)-b(v)) \right\} f_v \nu_y (dy) \nu_{\bf x}(d{\bf x}) \notag \\
 	&=& \int (u-v)  \left\{ a^{\prime}(u_v) \left( - \frac{b^{\prime} (v)}{a^{\prime} (v)} \right) +b^{\prime} (u_v) \right\}  \nu_{\bf x}(d{\bf x}). \notag
 	\end{eqnarray}
 	The last expression is achieved by integrating with respect to $y$ and using mean value theorem, where $u_v$ is an intermediate point between $u$ and $v$. Next consider
 	\quad $ |u-v|= |(\Rg {\bf x})^{\prime} \bta_1 -(R_{\bgm} {\bf x})^{\prime} \bta_2| \leq \| \Rg {\bf x} \| \| \bta_1 - \bta_2 \|, $
 	using the Cauchy-Schwartz inequality.
 	Now, by Lemma \ref{lm:2} we have $\|\Rg {\bf x}\|^2/(m_n p_n) \xrightarrow{p} \ad$ as $n\rightarrow\infty$ for some constant $0<\ad<1$. Therefore we can assume that for sufficiently large $p_n$, $\|\Rg {\bf x}\|\leq   \sqrt{m_n p_n}$. 
 	Also, $\|\bta_1-\bta_2 \| \leq \sqrt{m_n} \epsilon$.
 	Combining these facts we have $|u-v| \leq  \epsilon m_n \sqrt{ p_n}$. Similarly $\max\{|u|,|v|\} \leq   c_n m_n \sqrt{ p_n} $. 
 	These together imply that
 	
 	\vskip5pt
 	{\centering $d_0(f_u,f_v) \leq \epsilon m_n \displaystyle\sqrt{p_n}   \left\{ \displaystyle\sup_{|h|\leq  c_n m_n \sqrt{ p_n} } |a^{\prime}(h)|   \sup_{|h|\leq  c_n m_n\sqrt{ p_n} } \frac{|b^{\prime}(h)|}{|a^{\prime}(h)|} \right\}. $ \par}
 	
 	\noindent Therefore $d(f_u,f_v) \leq \varepsilon_n$ if we choose 
 	
 \qquad \qquad $\epsilon= \varepsilon_n^2 /\left\{  m_n \sqrt{ p_n} \sup_{|h|\leq   c_n m_n \sqrt{ p_n}} |a^{\prime}(h)|   \sup_{|h|\leq c_n m_n \sqrt{ p_n} } \left( |b^{\prime}(h)| / |a^{\prime}(h)| \right) \right\}$.
 	
 \noindent	Therefore, density $f_u$ falls in a Hellinger ball of size $\varepsilon_n$, centered at $f_v$. As shown earlier, there are at most $N(\epsilon, k)$ such balls. Thus, the Hellinger covering number
 	\begin{eqnarray}
 	&&N(\varepsilon_n,\mathcal{P}_n ) \leq N(\epsilon, k) =c p_n^{k_n+1}   \left( \frac{c_n}{\epsilon}+1 \right)^{m_n} \notag \hspace{2 in}\\
 	&&~~~~~ = c p_n^{k_n+1} \left[   \left( \frac{c_n}{\varepsilon_n^2} \left\{  m_n \sqrt{ p_n} \sup_{|h|\leq  c_n m_n\sqrt{ p_n}} |a^{\prime}(h)|   \sup_{|h|\leq  c_n m_n\sqrt{ p_n} } \frac{ |b^{\prime}(h)| }{ |a^{\prime}(h)| } \right\} +1 \right) \right]^{m_n} \notag\\
 	&&~~~~ \leq c p_n^{k_n+1} \left( \frac{1}{\varepsilon_n^2}  D( c_n m_n  \sqrt{p_n})+1 \right)^{m_n}, \hspace{2.2 in} \notag
 	\end{eqnarray}
 	where $D(R)=R \sup_{h\leq R} |a^{\prime} h| \sup_{h\leq R} |b^{\prime}(h)/ a^{\prime} (h)|$. The logarithm of the above quantity is no more than
 	$\log c + (k_n+1) \log p_n  - m_n \log (\varepsilon_n^2)+ m_n \log \left(1+D(c_n m_n \sqrt{p_n}) \right),$
 	as $0<\varepsilon_n^2<1$. Using the assumptions in Theorem \ref{thm:1} condition (a) follows.
 	
 	\noindent{\it Checking condition (b)} For the $\mathcal{P}_n$ defined in condition (a),
 	$\pi (\mathcal{P}_n^c) \leq  \pi (\cup_{j=1}^{m_n} |\theta_j|>c_n).$
 	
 	\noindent Observe that $\pi ( |\theta_j|>c_n) \leq 2 \exp\{-c_n^2/(2\sigma_{\theta}^2 ) \} /\sqrt{2\pi c_n^2/\sigma_{\theta}^2}$ by Mills ratio. Now for the choice that $c_n=\sigma_{\theta} \sqrt{5n} \varepsilon_n$ the above quantity is $2 \exp\{-5 n \varepsilon_n^2/2  \} /\sqrt{10 \pi  n \varepsilon_n^2 }$. Therefore 
 	
 	{\centering$ \pi (\mathcal{P}_n^c) \leq \sum_{j=1}^{m_n} \pi ( |\theta_j|>c_n) \leq 2 m_n\exp\{-5 n \varepsilon_n^2 /2\}/\sqrt{10 \pi  n \varepsilon_n^2 } \leq e^{-2n\varepsilon_n^2}$ \par}
 	
 	\noindent for sufficiently large $n$. Thus condition (b) follows.  
 	
 	\noindent{\it Checking condition (c)}
 	Condition (c) is verified for $t=1$. Observe that
 	
 	{\centering $ d_{t=1}(f,f_0)=\displaystyle\int \int f_0 \displaystyle\left( \frac{f_0}{f} -1 \right) \nu_y (dy) \nu_{\bf x}(d{\bf x}). $\par}
 	
 	Integrating out $y$ we would get $\int E_{y|{\bf x}} \left[ \left\{ (f_0/f) (Y) -1 \right\} \right] \nu_{\bf x}(d{\bf x}). $ Note that under $f$ and $f_0$ we have same function of $y$ as given in (\ref{eq_rp8}) with $h={\bf x}^{\prime} \bbta_0$ for $f_0$. Therefore, the above can be written as $E_{\bf x} \left[ \left\{ (\Rg {\bf x})^{\prime} \bta - {\bf x}^{\prime} \bbta_0 \right\} g\left(u^{*} \right)  \right]$ using mean value theorem where $g$ is a continuous derivative function, and $u^{*}$ is an intermediate point between $(\Rg {\bf x})^{\prime} \bta$ and ${\bf x}^{\prime} \bbta_0$. Therefore, if $\left| (\Rg {\bf x})^{\prime} \bta - {\bf x}^{\prime} \bbta_0 \right|< \Delta_n$, then $|u^{*}|< |{\bf x}^{\prime} \bbta_0 |+\Delta_n$. This in turn implies that for sufficiently small $\Delta_n$, $|g(u^{*})|$ will be bounded, say by $M$.
 	Consider a positive constant $\Delta_n$. From Lemma \ref{lm:3} we have
 	\begin{eqnarray}
 	P(| (\Rg {\bf x})^{\prime} \bta - {\bf x}^{\prime} \bbta_0|< \Delta_n ) &=& \sum_{\bgm} P(| (\Rg {\bf x})^{\prime} \bta - {\bf x}^{\prime} \bbta_0|< \Delta_n | \bgm ) \pi(\bgm) \notag \\
 	&\geq & E_{\bgm} \left[ \exp \left\{ - \frac{ ({\bf x}^{\prime} \bbta_0)^2 +\Delta_n^2 }{\sigma_{\theta}^2 \|\Rg {\bf x}\|^2 } \right\} \frac{2^4 \Delta^4}{\sigma_{\theta}^2 \|\Rg {\bf x}\|^2} \right] \notag \\
 	&=& \frac{2^4 \Delta_n^4}{ ({\bf x}^{\prime} \bbta_0)^2 +\Delta_n^2 } E_{\bgm} \left\{ \frac{Z_{\bgm}}{m_n p_n} \exp \left(-\frac{Z_{\bgm}}{m_n p_n} \right)   \right\}, \label{eq_rp11}
 	\end{eqnarray}
 	where $Z_{\bgm}=\left\{ ({\bf x}^{\prime} \bbta_0)^2 +\Delta_n^2 \right\}/ \left\{\sigma_{\theta}^2 \|\Rg {\bf x} \|^2 /(m_n p_n) \right\}$. By part (a) of Lemma \ref{lm:2}, and continuous mapping theorem $Z_{\bgm} -z_n\xrightarrow{p} 0$ in $\bgm$ where $z_n= \left\{ ({\bf x}^{\prime} \bbta_0)^2 +\Delta_n^2 \right\}/ \left(\sigma_{\theta}^2 c \ad \right) > \Delta_n^2 / \left(\sigma_{\theta}^2 c\ad \right)$.
 	For some non-negative random variable $Z$ and non-random positive numbers $p$, $a$ and $b$, consider the following fact
 	\vspace{-.1 in}
 	\begin{eqnarray}
 	E\left(\frac{Z}{p} \exp \left\{ -\frac{Z}{p} \right\} \right) &\geq& a P\left(\frac{Z}{p} \exp \left\{ -\frac{Z}{p} \right\} > a \right) 
 	\geq a P\left(\frac{Z}{p}> \frac{a}{b}, \exp \left\{ -\frac{Z}{p}\right\} >ab \right) \notag \\
 	&=& a P \left(Z>\frac{ap}{b}, Z< - p\log (ab)\right) 
 	= a P\left(\frac{ap}{b} < Z< - p\log (ab)\right). ~~~\label{eq_rp10}
 	\end{eqnarray}
 	Replacing $Z$ by $Z_{\bgm}$, $p$ by $m_n p_n$ and taking $a=\Delta_n^2 \exp \{-n\varepsilon_n^2/3 \}/ ( \sigma_{\theta}^2 c\ad)$, and $b=m_n p_n$ $ \exp \{-n\varepsilon_n^2/3 \}$. Thus 
 	$-p \log (ab) = -m_n p_n $ $ \log \left[ \Delta_n^2 m_n p_n \exp \{-2n\varepsilon_n^2/3 \} / (\sigma_{\theta}^2 c \ad ) \right]> m_n p_n n \varepsilon_n^2/2$ and  $ap/b= \Delta_n^2/ \left(\sigma_{\theta}^2 c \ad \right)$ 
 	for sufficiently large $n$. Therefore the expression in (\ref{eq_rp10}) is greater than 
 	
 	{\centering $\displaystyle\frac{\Delta_n^2}{\sigma_{\theta}^2 c \ad} \displaystyle e^{-n\varepsilon_n^2/3 }  P\left(\frac{\Delta_n^2}{\sigma_{\theta}^2 c\ad}  \leq Z_{\bgm} \leq \frac{1}{2} m_n p_n n \varepsilon_n^2 \right)  $\par}
 	
 	\vskip10pt
 	Note that $({\bf x}^{\prime} \bbta_0)^2 <\sum_{j=1}^{p_n} |\beta_{0,j}|<K$, and the probability involved in the above expression can be shown to be bigger than some positive constant $p$ for sufficiently large $n$. Using these facts along with equation (\ref{eq_rp11}), we have $ P(| (\Rg {\bf x})^{\prime} \bta - {\bf x}^{\prime} \bbta_0|< \Delta_n ) > \exp\{ -n\varepsilon_n^2/4 \} $ by choosing $\Delta_n =\varepsilon_n^2/(4M)$. Thus condition (c) follows. 
 \end{proof}
 
 \subsection{Proof of Theorem \ref{thm:3}}
\begin{lemma}\label{lm:4}
Suppose assumption (A1) holds, and $\ad$ is as in (A1). Let ${\bf x}\sim N(0,\Sigma_x)$ with $\Sigma_x$ satisfying (B1). Then the statement of Lemma \ref{lm:2} holds almost surely in $\bfx$.
	%
	%
\end{lemma}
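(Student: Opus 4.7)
The plan mirrors the proof of Lemma \ref{lm:2}, but replaces the uniform boundedness of the covariates with almost-sure moment bounds for the stationary Gaussian sequence $\{x_j\}$ under (B1). Both parts are proved by a two-stage conditioning: for (b) I condition on $\bfx$ and exploit randomness of $\bgm$; for (a) I condition on $(\bfx,\bgm)$ and exploit randomness of the projection entries $R^{*}_{k,j}$. In each case the goal is to show that the conditional mean tends to $c\ad$ and the conditional variance vanishes, both almost surely in $\bfx$, so that a Chebyshev step yields convergence in probability over the remaining randomness.

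For part (b), the conditional mean is
$$E\bigl[\|\bfx_{\bgm}\|^2/p_n \,\big|\, \bfx\bigr] \;=\; \frac{c}{p_n}\sum_{j=1}^{p_n} x_j^2 |r_{x_j,y}|^\delta \;\longrightarrow\; c\ad$$
almost surely, reading assumption (A1) as an almost-sure statement in the data. Using independence of the $\gamma_j$,
$$\mathrm{Var}\bigl(\|\bfx_{\bgm}\|^2/p_n \,\big|\, \bfx\bigr) \;\le\; \frac{1}{p_n^2}\sum_j x_j^4,$$
so it suffices to show $\sum_j x_j^4 = O(p_n)$ almost surely. Since $\{x_j\}$ is stationary Gaussian with $\sum_j \rho_j^2 < \infty$, Isserlis' formula gives $\mathrm{Cov}(x_i^4,x_j^4) = 72\rho_{|i-j|}^2 + 24\rho_{|i-j|}^4$, which is summable under (B1); hence $\mathrm{Var}\bigl(\sum_j x_j^4\bigr) = O(p_n)$. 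A Chebyshev-plus-Borel--Cantelli argument along a polynomial subsequence (or a standard SLLN for short-range dependent sequences) then yields $\sum_j x_j^4/p_n \to 3$ almost surely, so $\mathrm{Var}(\|\bfx_{\bgm}\|^2/p_n \mid \bfx) = O(1/p_n)$ almost surely, which combined with the mean computation gives (b).

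For part (a), write $Z_k = \sum_j R^{*}_{k,j} x_j \gamma_j$ so that $\|\Rg \bfx\|^2 = \sum_{k=1}^{m_n} Z_k^2$. Given $(\bfx,\bgm)$, the $Z_k$ are i.i.d.\ with mean $0$ and variance $\|\bfx_{\bgm}\|^2$, since $E[R^{*}_{k,j}] = 0$ and $E[(R^{*}_{k,j})^2] = 1$. Therefore
$$E\bigl[\|\Rg \bfx\|^2/(m_n p_n) \,\big|\, \bfx,\bgm\bigr] \;=\; \|\bfx_{\bgm}\|^2/p_n,$$
which converges to $c\ad$ almost surely by part (b). Because the $R^{*}_{k,j}$ have uniformly bounded fourth moment, the conditional variance is of order $\|\bfx_{\bgm}\|^4/(m_n p_n^2)$, and using $\|\bfx_{\bgm}\|^2 = O(p_n)$ almost surely from part (b) this is $O(1/m_n)$ almost surely; a second Chebyshev step then completes the argument.

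The main obstacle is the fourth-moment SLLN $\sum_j x_j^4/p_n \to 3$ almost surely, which is not a direct consequence of Lemma \ref{lm:5} and genuinely uses the Gaussian structure through Isserlis' identity to obtain covariance decay matching the summability in (B1). Once this moment bound is in hand, the remainder of the proof is structurally identical to that of Lemma \ref{lm:2}.
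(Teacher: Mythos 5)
Your proposal is correct and follows essentially the same route as the paper: condition on $\bgm$ (and then on the projection entries), show the conditional mean tends to $c\ad$ and the conditional variance vanishes, with the key new ingredient being an almost-sure law of large numbers for $\sum_j x_j^4/p_n$ under (B1), whose covariance computation ($\mathrm{Cov}(x_i^4,x_j^4)=24\sigma_{i,j}^2(\sigma_{i,j}^2+3)=72\sigma_{i,j}^2+24\sigma_{i,j}^4$) matches yours exactly. The only cosmetic difference is that the paper invokes an SLLN for dependent sequences from Serfling where you sketch a Chebyshev--Borel--Cantelli argument, and your appeal to ``$\|\bfx_{\bgm}\|^2=O(p_n)$ from part (b)'' is more directly justified by the deterministic bound $\|\bfx_{\bgm}\|^2\leq\|\bfx\|^2$ together with Lemma \ref{lm:5}(a).
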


\begin{proof}[{\bf Proof of Theorem \ref{thm:3}}]
	As in Theorem \ref{thm:1}, we show that $P \left( \mathcal{B}_n^c | \mathcal{A}_n \right) > 1- 2 e^{-n\varepsilon_n^2/4}$ by checking the conditions (a)-(c) of Lemma \ref{lm:1}. 
	The proof of conditions (a)-(b) remains unchanged under this setup.
	%
	%
	
	To prove condition (c) we need to show that the prior probability of distributions $f$ of the form (\ref{eq_rp8}) satisfies 
	 $d_t(f,f_0) \leq \varepsilon_n^2/4$, for some $t>0$, is bigger than $\exp\left\{-n\varepsilon_n^2/4 \right\}$. Proceeding as in the proof of Theorem \ref{thm:1}, we have 
	
	{\centering
		$d_{t=1}(f,f_0)=E_{\bf x} \left[ \left\{ (\Rg {\bf x})^{\prime} \bta - {\bf x}^{\prime} \bbta_0 \right\} g\left(u^{*} \right)  \right]$ 
		\par}
	
	\noindent where $u^{*}$ is an intermediate point between $(\Rg {\bf x})^{\prime} \bta$ and ${\bf x}^{\prime} \bbta_0$. Here $g(\cdot)$ is as follows

	{\centering
$		g(u^{*}) = \left.\frac{\partial}{\partial h}E_{Y|\bfx}\left[\exp \left\{Y (a(h)-a(h_0))+ b(h)-b(h_0)\right\} \right]\right|_{h=u^{*}}$

$= E_{Y|\bfx}\left[\left\{Y a^{\prime}(u^{*})+b^{\prime}(u^{*}) \right\} \exp \left\{Y (a(u^{*})-a(h_0))+ b(u^{*})-b(h_0)\right\} \right], $
\par}

\noindent	where $h=(\Rg\bfx)^{\prime}\bta$, $h_0=\bfx^{\prime}\bbta_0$ and $u^{*}$ is an intermediate point between $h$ and $h_0$.
	Thus we now must show that the prior probability of $\{f~ : ~
	E_{\bf x} \left[ \left\{ (\Rg {\bf x})^{\prime} \bta - {\bf x}^{\prime} \bbta_0 \right\} g\left(u^{*} \right)  \right] \leq \varepsilon_n^2/4\}$
	~is bigger than $\exp\left\{-n\varepsilon_n^2/4 \right\}$. To this end we split the range of ${\bf x}$ into two parts $A_{p_n}=\{\bfx: \|\bfx\|<\sqrt{3} p_n \} $ and $A_{p_n}^c$, and note that
	\begin{eqnarray}
	E_{\bf x} \left[ \left\{ (\Rg {\bf x})^{\prime} \bta - {\bf x}^{\prime} \bbta_0 \right\} g\left(u^{*} \right)  \right] = E_{\bf x} \left[\left. \left\{ (\Rg {\bf x})^{\prime} \bta - {\bf x}^{\prime} \bbta_0 \right\} g\left(u^{*} \right)\right|  A_{p_n}\right] P\left( A_{p_n}\right) \notag \\
	\hspace{2.5 in} + E_{\bf x} \left[\left. \left\{ (\Rg {\bf x})^{\prime} \bta - {\bf x}^{\prime} \bbta_0 \right\} g\left(u^{*} \right)\right|  A_{p_n}^c \right] P\left( A_{p_n}^c\right). \label{eqn_e4}
	\end{eqnarray}
	For two random variables $W_n$ and $U_n$, depending on the parameters $(\bta,\bgm)$, 
	
	\vskip5pt
	{\centering 
		$\pi\left(W_n+U_n \leq \varepsilon_n^2/4 \right)\geq\pi\left(|W_n|+ |U_n| \leq \varepsilon_n^2/4\right) \geq \pi\left(\left\{ |W_n|\leq \varepsilon_n^2/5 \right\}\bigcap \left\{ |U_n|\leq \varepsilon_n^2/20 \right\}  \right).$ 
		\par}
	\vskip5pt
	
	\noindent Define sequence of events $\{A_n\}=\left\{ |W_n| \leq \varepsilon_n^2/5 \right\}$ and $\{B_n\}=\left\{ |U_n| \leq \varepsilon_n^2/20 \right\}$. To show $\pi(A_n \cap B_n)\geq \exp\{ -n\varepsilon_n^2/4 \}$, it is enough to prove that 
	\begin{equation}
	\pi(A_n) \geq \exp\{-n\varepsilon_n^2/4 +2\log 2 \}~~\mathrm{ and}~~ \pi(B_n)\geq 1-\exp\{ -n\varepsilon_n^2/4 +4\log 2\}.
	\label{eqn_e2}
	\end{equation}
	Showing the first part of equation (\ref{eqn_e2}) is essentially same as the proof of condition (c) in Theorem \ref{thm:1}. The only part which requires attention is the proof of the claim that 
	
	\vskip10pt
	{\centering
		$P\left(\displaystyle\frac{\Delta_n^2}{\sigma_{\theta}^2 c\ad}  \leq Z_{\bgm} \leq \frac{1}{2} m_n p_n n \varepsilon_n^2 \right)$ ~~ is bigger than some constant $p$,
		\par}
	
	\vskip10pt
	\noindent  where $Z_{\bgm}=\left\{ ({\bf x}^{\prime} \bbta_0)^2 +\Delta_n^2 \right\}/ \left\{ \sigma_{\theta}^2 \| {\bf x}_{\bgm} \|^2 / (m_n p_n) \right\}$ and $\Delta_n$ is a constant. Note that by the continuous mapping theorem $Z_{\bgm} - z_n \xrightarrow{p} 0$ where $z_n=\left\{ ({\bf x}^{\prime} \bbta_0)^2 +\Delta_n^2 \right\}/ \left( \sigma_{\theta}^2 c \alpha_\delta \right)$. Note further that $\left({\bf x}^{\prime}\bbta_0\right)^2/p_n  n \varepsilon_n^2 \leq \|\bfx\|^2 \|\bbta_{0}\|^2/p_n  n \varepsilon_n^2 \rightarrow 0$ almost surely in $\bfx$ by Lemma \ref{lm:5} (b), and thus the statement holds almost surely under the restriction $\|\bfx_n\|<\sqrt{3}p_n$. Therefore, $ \Delta_n^2/(\sigma_{\theta}^2 c\ad)\leq  z_n\leq m_n p_n n \varepsilon_n^2/2$ with probability one in $\bfx$, and the probability in the above expression is bigger than some constant $p$. Hence the first part of (\ref{eqn_e2}) is proved.

Next we prove the second part of (\ref{eqn_e2}).
Consider a set $D_n$, such that $\pi\left(\left. (\bta,\bgm) \in D_n \right| \mathcal{A}_n\right) \geq \exp\{-n\varepsilon_n^2/4 +2\log 2 \}$, and show that $D_n\subseteq B_n$ for each $n$, implying $\pi(B_n)\geq \pi(D_n)$.
	For that we consider any $\bgm\in \cup_l \mathcal{M}_l$ (see assumption (A2)) and any $\bta: \|\bta\|\leq \sigma_\theta \sqrt{3n} \varepsilon_n/\sqrt{2}$. To see $\pi  \left(D_n|\mathcal{A}_n\right)\geq \exp\{-n\varepsilon_n^2/4 +2\log 2 \}$, observe that $\|\bta\|^2/\sigma_{\theta}^2$ follows a central Chi-squared distribution with degrees of freedom $m_n$. Therefore
	$P_{\bta} \left( \|\bta\|^2/\sigma_{\theta}^2 > m_n + 2 \sqrt{m_n t}+2t \right)\leq e^{-t} $ by \citet[Lemma 1]{chi_sq_bound}. Choosing $t=n\varepsilon_n^2/4-2\log 2$, we get $m_n + 2 \sqrt{m_n t}+2t< 3n\varepsilon_n^2/2$.
	
	Next note that the quantity  $\left\{(\Rg \bfx)^{\prime}\bta-\bfx^\prime\bbta_0\right\}\leq \left\|\Rg \bfx \right\|\|\bta\|+|\bfx^\prime\bbta_0|$. Using Result \ref{res1}, and by a simple application of Markov inequality it can be shown that $\left. \|\Rg \bfx \|\right/\left(m_n \|\bfx\| \right)=O_p(1)$. Therefore, for sufficiently large $n$, we consider 
	
	{\centering
		$\max\left\{\|\Rg \bfx \| \|\bta\|, \left| \bfx^{\prime} \bbta_0 \right|, \left|(\Rg \bfx)^{\prime}\bta-\bfx^\prime\bbta_0\right| \right\}\leq c\sqrt{n\varepsilon_n^2} m_n \|\bfx\|$, \par}
	
	\noindent for a suitable constant $c>0$.
	Further by assumption (B3), $|g(u)|\leq \exp\{c_0 u\}$ for some fixed $c_0>0$. Thus,
	
	{\centering
		$ E_\bfx \left[ \left. \left\{(\Rg \bfx)^{\prime}\bta-\bfx^\prime\bbta_0\right\} g\left(u^{*}\right) \right| \| \bfx \|> \sqrt{3} p_n \right] \leq E_\bfx  \left[ \left. \exp \left\{c\sqrt{n\varepsilon_n^2} m_n \|\bfx\| \right\}  \right| \| \bfx \|> \sqrt{3}p_n \right]$,\par}
	
	\noindent for a suitable constant $c>0$. Finally observe that
	\begin{eqnarray*}
E_\bfx  \left[ \left. \exp \left\{c\sqrt{n\varepsilon_n^2} m_n \|\bfx\| \right\}  \right| \| \bfx \|> \sqrt{3}p_n \right]
 = \frac{\left| \Sigma_x\right|^{-1/2}}{\left( 2 \pi\right)^{p_n/2}} \int_{ \| \bfx \|> \sqrt{3}p_n}\hspace{-.2 in}  \exp\left\{ c\sqrt{n\varepsilon_n^2} m_n \|\bfx\| -\frac{\bfx^{\prime} \Sigma_x^{-1} \bfx}{2} \right\} d\bfx\qquad
\end{eqnarray*}
\begin{eqnarray*}
		&& \leq \frac{(c p_n)^{p_n/2} }{\left( 2 \pi\right)^{p_n/2}} \int_{ \| \bfx \|> \sqrt{3}p_n}   \exp\left\{ c\sqrt{n\varepsilon_n^2} m_n \|\bfx\| -\frac{1}{c_0 l_n}\|\bfx\|^2 \right\} d\bfx \\
		&& = \frac{(c p_n)^{p_n/2} e^{c \hspace{.025 in}l_n n \varepsilon_n^2m_n^2}}{\left( 2 \pi\right)^{p_n/2}}
		\int_{ \| \bfx \|> \sqrt{3}p_n}  \hspace{-.1 in } \exp\left\{ -\frac{c_1}{c_0 l_n}\left(\|\bfx\|- cl_n\sqrt{n\varepsilon_n^2} m_n \right)^2 -\frac{c_2}{c_0 l_n} \|\bfx\|^2 \right\}d\bfx,\\
		&& \leq  \frac{(c p_n)^{p_n/2} e^{c \hspace{.025 in}l_n n \varepsilon_n^2m_n^2}}{\left( 2 \pi\right)^{p_n/2}} \exp\left\{ -\frac{3 c_1 p_n^2}{c_0l_n}\left(1- \frac{cl_n\sqrt{n\varepsilon_n^2} m_n}{\sqrt{3} p_n} \right)^2\right\} \int_{\|\bfx\|>\sqrt{3} p_n} e^{-c_2\|\bfx\|^2/(c_0l_n)}d\bfx
	\end{eqnarray*}
	where the first inequality is due to assumption (B2) and $c_1+c_2=1$. Thus the above quantity is less than
	
	\vskip5pt
	{\centering
$		\leq \exp\left[ -\displaystyle\frac{3 c_1 p_n^2}{c_0 l_n}\left\{\left(1- \frac{c~l_n\sqrt{n\varepsilon_n^2} m_n}{\sqrt{3}p_n} \right)^2+\frac{c~l_n^2 n\varepsilon_n^2 m_n^2}{p_n^2}+ \frac{cl_n}{p_n}\log\left(\frac{c_0l_n}{c_2}\right)+\frac{c}{p_n}\log(p_n)\right\}  \right]$
	$ \qquad \qquad \qquad\times P\left(\left\| \bfx\right\|>\sqrt{3}p_n | \bfx\sim N({\bf 0},c_0c_2^{-1}l_n \right)$
\par}
	\vskip5pt
	
\noindent Noting that $\max\left\{ l_n\log(l_n),l_n\sqrt{n\varepsilon_n^2} m_n\right\}=o(p_n)$
	
	{\centering 
		$|U_n|=E_{\bf x} \left[ \left\{ (\Rg {\bf x})^{\prime} \bta - {\bf x}^{\prime} \bbta_0 \right\} g\left(u^{*} \right)|A_{p_n}^c  \right] P\left(A_{p_n}^c\right) \leq \exp\{-cp_n\}\leq \varepsilon_n^2/20.$
		\par}
	
	\noindent This completes the proof. 
\end{proof}

 

\newpage 

{\centering

{\LARGE {\bf Supplementary Material}}
\par}

\vskip20pt

\emph{Supplementary Materials} contains additional discussion, numerical illustrations and mathematical details. It is organized in six sections. In Section \ref{sm:1}, we describe applicability of Randomized Independence Screening (RIS) as an independent screening algorithm. We apply RIS to four penalized likelihood methods, and compare their performance with other screening algorithms in light of the simulated datasets considered in Section \ref{sec:4} of the paper.  

Section \ref{sm:2p} is  a supplement to Section \ref{sec:2} of the paper. Choices of appropriate marginal utility measure for TARP under the GLM setup are discussed in Section \ref{sm:2p1}. Sensitivity of RIS-RP and RIS-PCR to different choices of tuning parameters is discussed in Section \ref{sm:3.1}. The method of simple aggregation is compared with other ensemble leaning approaches in Section \ref{sm:3.2}.

Section \ref{sm:2} is a supplement to Section \ref{sec:3} of the paper. It discusses the implications of the conditions and assumptions used in the results on predictive accuracy of TARP, and interpretation of the theorems (Section \ref{sm:2.1}).  Proofs of Lemma \ref{lm:5}, Theorem \ref{thm:2} and Theorem \ref{thm:4} are in Section \ref{sm:2.1.1}, \ref{sm:2.2} and \ref{sm:2.3}, respectively.

Section \ref{sm:3} contains additional simulation works.  The detailed specifications of the competing methods, and formulae applied to find prediction intervals for the competing methods are in Section \ref{sm:3.3}. Performance of the competing methods for some other choices of $p_n$ is shown in Section \ref{sm:3.4}. Detailed discussion of the competitive performance of the methods is in Section \ref{sm:3.5}.

Section \ref{sm:4} is a supplement to Section \ref{sec:5} of the paper. It contains results on three real datasets, the GTEx data, Eye data (in Section \ref{sm:4.0}) and the ultrahigh-dimensional  GEUVADIS cis-eQTL data (Section \ref{sm:4.3}).  Further, it contains predictive calibration of the methods when applied to the binary response datasets (see Section \ref{sm:4.2}).

The final section (Section \ref{sm:5}) contains a proofs of the Lemma \ref{lm:2} and \ref{lm:4} from the Appendix of the paper.

\section{RIS as a Screening Algorithm}\label{sm:1}
Randomized independence screening (RIS) can be considered as a general screening algorithm when the data dimension is very large and the predictors are highly correlated, for e.g., in genome-wide association studies. It serves as a time-efficient alternative to SIS for regressors with multicollinearity. 

Recall that calculation of the posterior mean of regression coefficients involves inversion of the matrix 
~$( X_n^{\prime} X_n  + I )^{-1}=I - X_n^{\prime}(I +X_n X_n^{\prime})^{-1}X_n$ (by Woodbury matrix identity). While the inversion step has complexity at most order $n^3$, the matrix multiplications are of order $n^2p_n$. Screening algorithms reduce the complexity of the whole operation to $\pg^2 n+\min\{\pg^3,n^3\}$, by reducing the number of regressors from $p_n$ to $\pg\ll p_n$. This reduction is in particular beneficial if $\pg<n$. However, due to multicollinearity and huge dimensionality, screening algorithms limited to $n$ marginally optimal predictors may not be appropriate. RIS provides the scope to access a larger list of predictors without making the computational cost much higher. We give a toy example to demonstrate that.

\paragraph{Example.} Consider a normal linear model having regressors with marginal correlation coefficients $|r_j|\approx (1-1/(2n))^{j-1}$. As $n$ is large the marginal correlations of the first $2n$ predictors exceed $\exp(-1)\approx 0.4$, and they should be included in the set of screened predictors. In RIS we take $q_j=|r_j|^\delta=(1-1/(2n))^{\delta(j-1)}$ as the inclusion probability of the $j^{th}$ predictor. The number of predictors selected, $\pg$, is a random quantity following Poisson binomial distribution \citep{PBD_1993} with an expected value $\sum_{j=1}^{p_n} (1-1/(2n))^{\delta(j-1)} \approx n$ for $\delta=2$, $\approx 2n/3$ for $\delta=3$ and so on, for sufficiently large $n$.   

The approach of generating multiple realizations compensates the loss due to randomization without greatly increasing the computational time. Consider for example a simple setup where $2n$ predictors have $|r_j|=0.5$ and the rest have $r_j=0$ (except possibly one with $|r_j|=1$). While it is important to consider all $2n$ predictors, RIS randomly selects about $\pg=n/2^{\delta-1}$ predictors. Even after $M$ repetitions, the computational time can still be less than considering $2n$ predictors if $\delta>(\log_2M+1)/2$.   

Below we provide a brief overview of performance of RIS screening. We consider $4$ methods, viz., RIS-LASSO, RIS-Ridge, RIS-SCAD and RIS-MCP, and compare these methods in $2$ simulation schemes, viz., Schemes I and II,  provided in Section \ref{sec:4}. 
In Section \ref{sec:4}, we provide the best result for SCAD and MCP among the results obtained using various packages including \emph{SIS}, which applies ISIS screening. Similarly, for LASSO and ridge we provide the best result obtained from \emph{glmnet} and \emph{biglasso} package, where the later uses sequential strong rule (SSR) screening.
The results for RIS-LASSO and RIS-Ridge are obtained using \emph{glmnet} package, and RIS-SCAD and RIS-MCP using \emph{ncvreg} package. For methods based on penalization, we do not include the search for optimal $\lambda$ under the aggregation step. We choose the best $\lambda$ for a fixed choice of screened regressors. The results are summarized in Table \ref{tab4}.

\afterpage{
	\begin{table*}[!h]
		\renewcommand{\arraystretch}{1.2}
		\begin{center}
			{\scriptsize
				\caption{Mean and standard deviation (sd) of mean square prediction error, empirical coverage probabilities and width of 50\% prediction intervals of four RIS screened methods.}
				\label{tab4}
				\begin{tabular}{|p{2 cm}|p{1.9 cm} p{1.6 cm}  p{1.6 cm}| p{2 cm}  p{1.6 cm} p{1.7 cm} |}  \hline
					Schemes $\rightarrow$  &  \multicolumn{3}{|c|}{\it I} & \multicolumn{3}{|c|}{\it II} \\ 
					$(n,p_n)$ &  \multicolumn{3}{|c|}{$(200,2\times 10^3)$} & \multicolumn{3}{|c|}{$(200,10^4)$} \\ 
					Methods $\downarrow$ & MSPE & ECP  &  Width &  MSPE & ECP & Width   \\ \hline
					{\it RIS-SCAD}    & 11.19 \; (2.17)  & 49.8 \; (6.4)  &   4.62 \; (0.40)& 
					6.95 \; ~(1.44) & 56.2 \; (6.0)   & 4.21  \; (0.29) \\
					{\it RIS-MCP}     & 10.53 \; (1.83)  & 49.2 \; (5.7)  & 4.49 \; (0.33)  & 
					7.09 \; ~(1.38) & 57.3 \; (6.4)  & 4.44 \; (0.42) \\
					{\it RIS-LASSO}   & 12.79 \; (1.90)  & 49.9 \;  (6.2) & 4.82 \; (0.37)  &
					7.82 \; ~(2.30)  & 49.5 \; (6.6) &    3.72 \; (0.49)\\
					{\it RIS-Ridge}   & 10.22 \; (1.71)  & 57.8 \; (10.3) & 5.21 \; (1.01)  & 
					13.33 \; (4.48)   & 55.8 \; (6.5) & 5.82 \; (0.39) \\ \hline 
				\end{tabular}
			}
		\end{center}
	\end{table*}
}

Observe that there is no visible improvement of SCAD and MCP of under RIS in {\it Scheme I}.
The gain due to RIS screening becomes visible in {\it Scheme II}. Here all the 4 methods show much lower MSPE, higher average ECP and lower width under RIS screening. In particular, for Ridge the averages MSPEs are more than $\sim 33$ under SSR screening (see Table \ref{tab1}), which are reduced to $\sim 13$ under RIS. Width of $50\%$ prediction interval (PI) also decrease for all the methods under RIS.

We compare the computational time of RIS-LASSO, RIS-Ridge and RIS-SCAD with that of LASSO, Ridge tuned by SSR screening (from \emph{Biglasso} package), and SCAD tuned by ISIS screening (from \emph{SIS} package). The results are summarized in Figure \ref{fig7}.
In terms of computational time, RIS based methods require marginally higher time for lower values of $p_n$ due to the aggregation step. However, RIS becomes much more efficient than SSR for higher values of $p_n$.
\afterpage{
	\begin{figure}
		\centering     
		\includegraphics[angle=270,scale=.4]{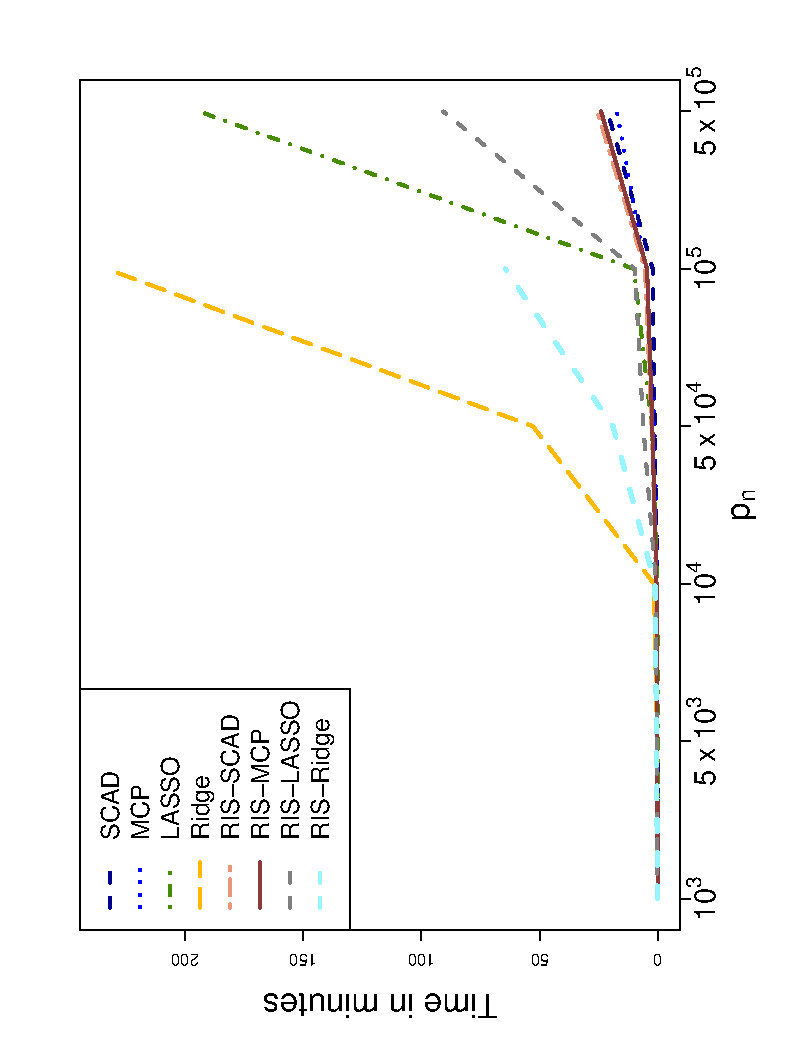}
		\caption{Computational time of SCAD, MCP, LASSO and Ridge under different screening algorithms}
		\label{fig7}
	\end{figure}
}

\section{An Addition to Section \ref{sec:2} of the Paper} \label{sm:2p}
\subsection{Choice of marginal utility measure for TARP in GLM:}\label{sm:2p1}
There are many suggestions of suitable choices of marginal utility functions for screening in GLMs (see, e.g., \cite{ZA_2000}, \cite{fan2009}, \cite{fan2010}, \cite{KS2017}). These include the maximum marginal likelihood estimator (MMLE), maximized marginal likelihood (MML), regression correlation coefficient (RCC), defined as the correlation between marginal estimates and observed response, etc. 
Theoretically, any marginal utility measure which satisfies assumptions (A1) and (A2) serves our purpose.

TARP is not sensitive to the choice of marginal utility function as long as it provides a measure of marginal association between predictors and response. For example, for a binary response, the marginal correlation coefficient of $y$ and a standardized predictor, $x_j$, is proportional to the $t$-test statistic for testing difference of means, and can be used as the marginal screening measure for TARP. 

Usage of MMLE, MML, RCC or equivalent criteria have relatively slow 
computational speed due to maximization of the likelihood. Therefore, we favor the correlation coefficient, as there are no significant differences in variable selection performance compared with using the MMLE, MML or RCC (see also \cite{SIS_package}). 

To emphasize this point we perform two simulation exercises: We consider a scheme similar to Scheme I of the paper with $p_n=10^3$ and $n=100$. The response variables are chosen to be binomial and Poisson, respectively, and are generated using the logit and log link. Arranging the predictors with respect to the marginal utility measure, we observe exactly the same order when using MMLE and absolute correlation. Here the MMLE is calculated via iteratively reweighted least squares (IWLS) available in the $\mathrm{glm}$  package of R. 

\vskip5pt
\noindent{\bf Complexity for non-Gaussian Likelihood:} In Step 4 (see Section \ref{sec:3.2}), we calculate the posterior mean of the compressed regression coefficient. For non-Gaussian likelihoods, the posterior mean is not available in analytic form. We can either rely on analytic approximations like Laplace method, or use MCMC here. However, as we are dealing with $m_n$ compressed regressors only, the computational cost is quite modest. 

For example, consider the situation where the response is binary.
Let $y$ follow a Probit regression model with $P(y_i=1)=\Phi({\bf z}_i^{\prime}\bta)$, where ${\bf z}_i$ is the $i^{th}$ row of $Z_n=X_nR_n^{\prime}$.  
Let $y_i^{*}$ be an auxiliary random variable such that $y^{*}_i\sim N({\bf z}_i^{\prime}\bta,1)$ and $y_i^{*}>0$ iff $y_i=1$, $i=1,\ldots,n$. 
Using Gibbs sampling, we sample from the full conditional distributions. 
The full conditional of $\bta$ given ${\bf y}^{*}$ and $\mathcal{D}^n$ is $m_n$-variate normal with mean $(Z_n^{\prime}Z_n+I)^{-1}Z_n^{\prime} {\bf y}^{*}$ and dispersion $(Z_n^{\prime}Z_n+I)^{-1}$. Given $\bta$ and ${\bf y}$, ${\bf y}^{*}$ is updated using a truncated normal density with mean $Z_n\bta$ and dispersion $I$. The computational cost in each iteration due to matrix multiplication and inversion is at most order $O(m_n^2 n)$. In $M$ MCMC iterations, total complexity of step 4 is $O(M m_n^2 n)$.

\subsection{Sensitivity of TARP to the tuning parameters}\label{sm:3.1}
TARP has two tuning parameters $m_n$ and $\delta$, and RIS-RP has an additional parameter $\psi$. To show the effect of aggregation we take different values of these tuning parameters, and present the results without aggregation. Although we do not take different values of $\delta$ while aggregating, here we will show the effect of different choices of $\delta$ as well.

The results presented here correspond to {\it Scheme I} of the paper with $p_n=2\times 10^3$. We consider $100$ samples and computed the mean square prediction error (MSPE), empirical coverage probability (ECP) and width of $50\%$ prediction intervals (PI). For each of the samples, we predicted $y_{new}$ without using the aggregation step. Four choices of each of the tuning parameters are taken. When one tuning parameter varies, the others are kept fixed at $m_n=100$, $\psi=0.25$ and $\delta=2$.
Table \ref{tab-s1} shows the results.
\afterpage{
	\begin{table*}[!h]
		\renewcommand{\arraystretch}{1.6}
		\begin{center}
			{\footnotesize
				\caption{Mean and sd of MSPE, ECP and width of 50\% PIs over 100 simulations for different choices of tuning parameters of RIS-RP and RIS-PCR.}
				\label{tab-s1}
				\begin{tabular}{|p{.4 cm}|p{1.25 cm} p{1 cm}  p{1.25 cm} | p{.4 cm} | p{1.25 cm} p{1 cm} p{1.25 cm}| p{.4 cm} |p{1.25 cm} p{1 cm}  p{1.25 cm} |}  \hline
					\multicolumn{12}{|c|}{\it RIS-RP}  \\ \hline 
					$m_n$& MSPE & ECP & Width & $\psi$ &MSPE & ECP & Width & $\delta$ & MSPE & ECP & Width\\ \hline
					40  & $16.13_{2.23}$ & $37.7_{5.6}$  & $3.95_{0.26}$ & $0.1$ & $17.74_{3.00}$ & $29.3_{5.0}$ & $3.16_{0.24}$ & $0.5$ & $23.35_{3.94}$ & $34.0_{5.3}$ & $4.19_{0.20}$\\
					80  & $17.22_{2.73}$ & $31.3_{5.3}$  & $3.37_{0.29}$ & $0.2$ & $17.53_{2.78}$ & $30.5_{5.2}$ & $3.22_{0.28}$ & $1.5$ & $20.05_{3.16}$ & $30.2_{4.9}$ & $3.47_{0.22}$\\
					120 & $19.48_{4.00}$ & $27.0_{5.4}$  & $3.03_{0.28}$ & $0.3$ & $18.23_{2.80}$ & $29.6_{5.0}$ & $3.18_{0.25}$ & $2.5$ & $15.62_{2.45}$ & $31.0_{5.1}$ & $3.13_{0.26}$ \\
					160 & $21.27_{4.34}$ & $25.4_{5.5}$  & $2.93_{0.33}$ & $0.4$ & $18.19_{2.88}$ & $29.4_{5.1}$ & $3.21_{0.22}$ & $3.5$ & $14.23_{2.19}$ & $35.8_{5.6}$ & $3.45_{0.30}$\\       \hline 
				\end{tabular}
				\begin{tabular}{|p{.4 cm}|p{1.25 cm} p{1 cm}  p{1.25 cm} | p{.4 cm} |p{1.25 cm} p{1 cm}  p{1.25 cm} |}  \hline
					\multicolumn{8}{|c|}{\it RIS-PCR}  \\ \hline 
					$m_n$& MSPE & ECP & Width  &$\delta$ & MSPE & ECP & Width\\ \hline
					40  & $13.20_{1.91}$ & $30.7_{5.7}$  & $2.90_{0.24}$   & $0.5$ & $12.32_{1.77}$ & $23.6_{4.6}$ & $2.11_{0.16}$\\
					80  & $13.85_{2.18}$ & $28.1_{5.0}$  & $2.70_{0.28}$   & $1.5$ & $13.36_{1.75}$ & $24.3_{4.9}$ & $2.28_{0.20}$\\
					120  & $15.60_{2.79}$ & $26.9_{5.7}$  & $2.67_{0.40}$ & $2.5$ & $15.56_{2.29}$ & $30.5_{5.2}$ & $3.07_{0.32}$\\
					180  & $17.67_{2.82}$ & $26.7_{5.8}$  & $2.85_{0.36}$ & $3.5$ & $14.23_{2.09}$ & $36.7_{6.0}$ & $3.49_{0.30}$\\ \hline 
				\end{tabular}
			}
		\end{center}
	\end{table*}
}

From Table \ref{tab-s1} the following statements can be made:

(i) $m_n$ is the number of linear combinations (principal components) of screened regressors considered for prediction in RIS-RP (RIS-PCR). TARP tends to perform better for lower values of $m_n$. This behavior is theoretically supported, as the conditions for predictive accuracy include $m_n \log p_n <n \varepsilon_n^2/4$ for all sufficiently large $n$ (see Theorems \ref{thm:1}, \ref{thm:2}). So smaller values of $m_n$ imply higher rate of convergence.   

(ii) $\psi$ controls the density of zeros in the random projection matrix in RIS-RP. Variation of $\psi$ does not seem to affect RIS-RP much. In fact we could take much sparser choices of the random matrix as described in Remark \ref{rm3} of the paper. 

(iii) $\delta$ controls the number of screened predictors in the RIS step. While RIS-RP tends to improve performance for higher values of $\delta$, RIS-PCR tends to deteriorate as $\delta$ increases.
The methods BCR and PCR correspond to cases with $\delta=0$ of RIS-RP and RIS-PCR, respectively. The differences in MSPEs of BCR and PCR in {\it Scheme I} also support this observation.  

(iv) RIS-RP seems to gain more advantage due to the aggregation step than RIS-PCR, which is again intuitive, as RIS-RP relies on the random projection matrix unlike RIS-PCR.

\subsection{Comparison of simple aggregation over other ensemble learning approaches }\label{sm:3.2}
We compare three approaches of aggregation, viz., simple averaging, cross-validation and model averaging, with respect to computational complexity and performance on simulated datasets. 

{\it Complexity of RIS-RP.} Recall the steps of RIS-RP in Section \ref{sec:3.2}: (i) Screening with complexity $O(p_n)$, (ii) random matrix generation and matrix post-multiplication with total complexity $O(n\pg m_n)$ where $\pg$ is number of selected regressors in the RIS step, and (iii) calculation of Bayes estimate with complexity $O(m_n^2n)$ as $m_n<n$. 


{\it Complexity of RIS-PCR.}  The second step of RIS-RP is replaced by SVD of $X_{\bgm}$ in RIS-PCR, which involves the complexity of $O\left(n\pg \min\{n,\pg\}\right)$, followed by a multiplication step of complexity $O(n \pg  m_n)$. 

\vskip5pt
\noindent{\bf Aggregating over different choices of tuning parameters.} Note that the first step of screening is not repeated over the steps of aggregation. 

{\it Model Averaging:} Suppose we consider $N$ different choices of $\{m_n, \psi, \bgm, \Rg \}$. For each of the $l$ choices we have a model $\mathcal{M}_l : y\sim f\left(y | {\bf x}, m_{n,l}, \psi_l, \bgm_l, R_{\bgm_l,l} \right)$ and a corresponding estimate of ${\bf y}_{new}$ given $X_{new}$, say $\hat{\bf y}_{new,l}$, where $l\in\{1,2, \ldots, N\}$. The method of model averaging puts forward the expected value of $\hat{\bf y}_{new,l}$ as an estimate of ${\bf y}_{new}$ as

\vskip3pt
\hspace{1.4 in }$\hat{\bf y}_{new} = \sum_{l=1}^{N} \hat{\bf y}_{new,l} P\left(\mathcal{M}_l | \mathcal{D}^n \right)$
\vskip3pt

\noindent where $P\left(\mathcal{M}_l | \mathcal{D}^n \right)$ is the posterior probability of $\mathcal{M}_l$. For normal linear models, as well as for non-Gaussian GLMs, the posterior probability, $P\left(\mathcal{M}_l | \mathcal{D}^n \right)$, requires calculation of $\left|Z_n^{\prime}Z_n +I\right|$ in addition to the components required to calculate $\hat{\bf y}_{new,l}$, which is of order $O(m_n^3)$. Therefore, for model averaging the complexity of step (ii) is increased by a term of $O(m_n^3)$, and the steps (ii) and (iii) are multiplied $N$ times each.

{\it K-fold Cross Validation (CV):} Like model averaging, for K-fold CV we consider $N$ different choices of $\{m_n, \psi, \bgm, \Rg \}$. 
For each of these choices, we split the training dataset into $K$ equal parts and obtain an estimate of ${\bf y}_{new}$, $\hat{\bf y}_{new,l}$, using $K-1$ parts. This estimate is then validated based on the remaining unused part, and a MSPE is obtained. The combined MSPE for the $l^{th}$ model is obtained by aggregating the $K$ MSPEs. Finally that model is considered which yields minimum MSPE. Clearly
K-fold CV requires $N$ repetitions of step (ii) and $KN$ repetitions of step (iii), although the last step now has complexity $m_n^2(n/K+m_n)$.

{\it Simple aggregation:} This method adds the least computational complexity to the method. If we consider $N$ different models $\mathcal{M}_l$, then the steps (ii) and (iii) are repeated $N$ times.

\vskip5pt
\paragraph{Performance of different methods of aggregation in simulated datasets.}  We compare different methods of aggregation under {\it Scheme II} (see Section \ref{sec:4} of the paper). We consider $n=100$ and $3$ choices of $p_n$, viz., $p_n=10^3,5\times 10^3$ and $10^4$ to see the effect of increments of dimension. 
\afterpage{
	\begin{figure}
		\centering     
		\subfigure[$(n,p_n)=(100, 10^3)$.]{\label{fig-rs1}  \includegraphics[height=2.5 in, width=2.5 in]{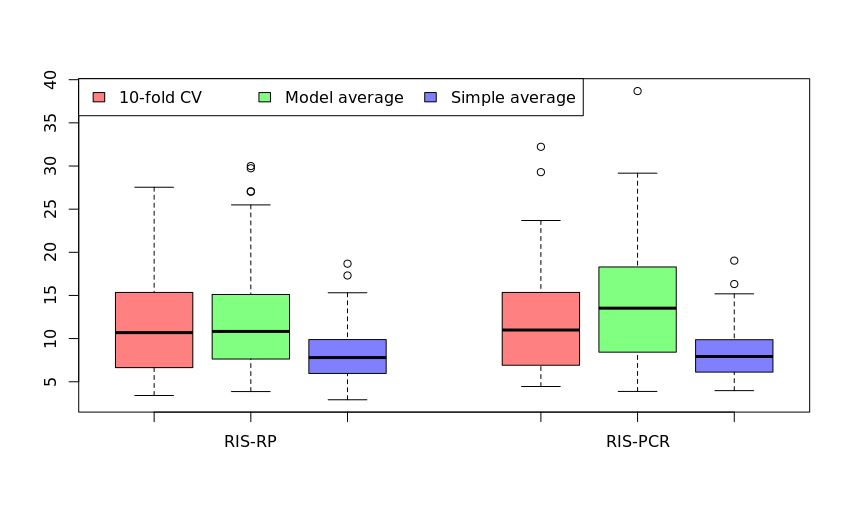}}
		\subfigure[$(n,p_n)=(100, 5\times 10^3)$.]{\label{fig-rs2}  \includegraphics[height=2.5 in, width=2.5 in]{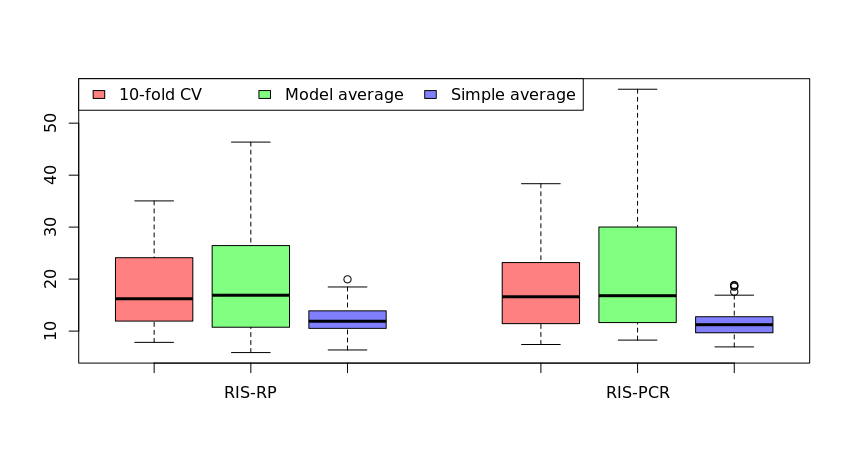}}
		\caption{Box-plot of MSPEs in {\it Scheme II}} 
	\end{figure}
	\begin{figure}
		\centering     
		\subfigure[$(n,p_n)=(100, 10^4)$.]{\label{fig-rs3}  \includegraphics[height=2.5 in, width=2.5 in]{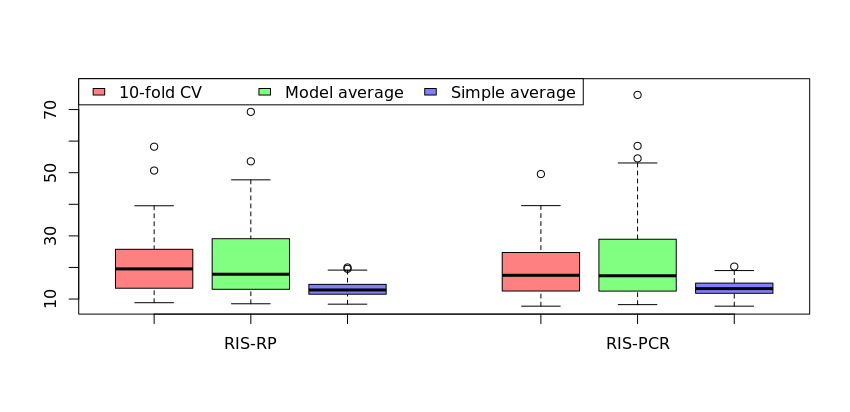}}
		\subfigure[Time in sec., $p_n=10^4$]{\label{fig-rs4}  \includegraphics[height=2.5 in, width=2.5 in]{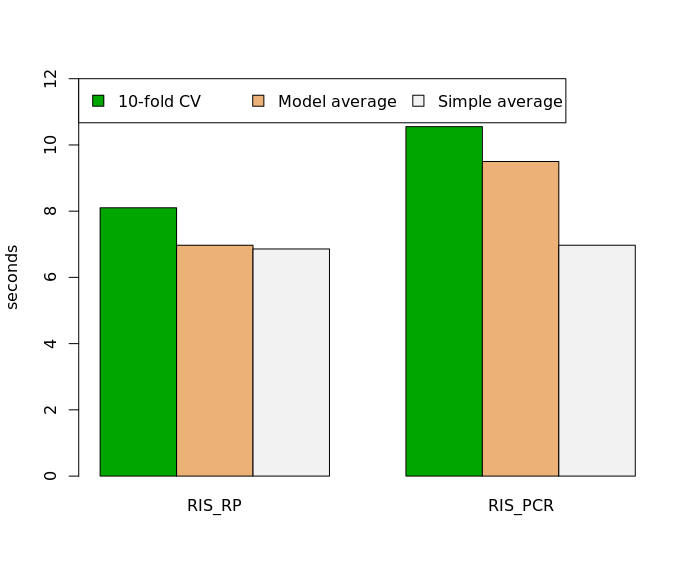}}
		\caption{Box-plot of MSPEs and multiple bar-chart of computational time in {\it Scheme II}. } 
	\end{figure}
	
}
From Figures \ref{fig-rs1}-\ref{fig-rs4}, observe that simple averaging has better and more stable performance than model averaging and cross validation. Difference in performance increases as $p_n$ increases. Model averaging tends to be more affected by increment of dimension. Simple aggregation also requires less time to compute, and the difference is significant for RIS-PCR.

\section{An Addition to Section \ref{sec:3} of the Paper}\label{sm:2}
\subsection{Implications of the assumptions and conditions required to prove the theorems}\label{sm:2.1}
The following two remarks discuss implications of Assumptions (A3) and (A3$^\prime$), and that of the conditions (i)-(iii) in Theorems \ref{thm:1} and \ref{thm:2}, respectively.
\begin{remark}\label{rm5}
	If the matrix $X_{\bgm}^{\prime} X_{\bgm}$ has rank less than $m_n$, then $\alpha_n=1$ by Perseval's identity. Suppose rank of $X_{\bgm}^{\prime} X_{\bgm}$, say $r_n (\leq n)$, is bigger than $m_n$. Then the row space of $X_{\bgm}$, or that of $X_{\bgm}^{\prime} X_{\bgm}$, is spanned by a set of $r_n$ basis vectors ${\bf v}_1, {\bf v}_2, \ldots, {\bf v}_{r_n}$. Therefore, any data point ${\bf x}$ can be written as a linear combination of these $r_n$ vectors as 
	${\bf x}=a_1 {\bf v}_1+ a_2 {\bf v}_2+ \cdots + a_{r_n} {\bf v}_{r_n}$,
	where $a_1, \ldots, a_{r_n}$ are constants not all equal to zero. As the vectors ${\bf v}_j$ are orthonormal, ${\bf v}_j^{\prime} {\bf x}=a_j$ for all $j=1, \ldots, r_n$, which in turn implies that ${\bf x}^{\prime} {\bf x}=\sum_{j=1}^{r_n} a_j^2$. Also, note that the first $m_n$ among these $r_n$ vectors constitute $V_{\bgm}^{\prime}$, which implies $\|V_{\bgm}^{\prime} {\bf x} \|^2=\sum_{j=1}^{m_n} a_j^2$. Thus $\| V_{\bgm} {\bf x} \|^2/\| {\bf x} \|^2= \sum_{j=1}^{m_n} a_j^2 / \sum_{j=1}^{r_n} a_j^2$, and magnitude of the ratio depends on the part of ${\bf x}$ explained by the last few principal component directions. The lower bounds $\alpha_n \sim (n\varepsilon_n^2)^{-1}$ (in (A3)) or $\alpha_n \sim (n\varepsilon_n^2)^{-1+b}$ for some $b>0$ (in (A3$^\prime$)) are reasonable in view of many real data 
	scenarios where most of the variation is explained by the first few principal components.
\end{remark}

\begin{remark}\label{rm6} 
	The conditions (i)-(iii) in Theorems \ref{thm:1} and \ref{thm:2} are related to the sizes of $p_n$, $m_n$ and $k_n$ in comparison with $n\varepsilon_n^2$.
	A sufficient condition for (i) is $m_n \log n < n\varepsilon_n^2/4$, providing an upper bound on the dimension of the subspace, $m_n$. Condition (ii) restricts the permissible number of regressors, $p_n$, and the number of possible models of each dimension. If there is a strict ordering in the marginal utilities $|r_{x_j,y}|$, so that $k_n\leq \kappa$ for some large number $\kappa$, then the condition reduces to $\log p_n < n\varepsilon_n^2/4$.  To illustrate that the condition (iii) tends to be weak, consider distributions of $y$ corresponding to Bernoulli, Poisson and normal. For these cases, the quantity $D(h^{*})$ is at most of order $O(h^{*})$. Therefore, condition (iii) does not impose much additional restriction over (i)-(ii), except $m_n \log p_n < n\varepsilon_n^2/4$, which induces a stronger upper-bound to $m_n$.  
\end{remark}

\subsubsection{Explanation of the Assumption (A2)}\label{sm:2.1.0}
We choose predictors based on marginal utility. Suppose the absolute marginal utilities, $q_j$, are normalized to range $[0,1]$. Then the ``inclusion probability'' of a model $\bgm$ is $q(\bgm)=\prod_{j=1}^{p_n} q_j^{\gamma_j} (1-q_j)^{(1-\gamma_j)}$.

Assumption (A2) effectively limits inclusion of models with small $q(\bgm)$, without adding any restriction to the model size. The class of models considered in (A2), $\mathcal{A}_n=\cup_l\mathcal{M}_l$, contains the top $p_n^{k_n}$ models (ordered w.r.t. marginal inclusion probabilities) of all dimension. Assumption (A2) makes $\mathcal{A}_n$ the effective model space.

Note that all models of dimension $l<k_n$ and $l>(p_n-k_n)$ belong to $\mathcal{A}_n$. Further models considered in SIS, i.e., the models (of any dimension) containing predictors with highest marginal utility, are included in $\mathcal{A}_n$. However, as SIS based methods consider only one selected model for further analysis, sparsity is necessary condition for SIS. But sparsity is not a necessary condition for (A2). Below we illustrate this with some simple examples.

\paragraph{Example 1: Strong Sparsity.}  Consider the situation where there exists a few active regressors. Under high signal-to-noise ratio and partial orthogonality condition, \cite{fan2010} show that the marginal regression parameter of inactive regressors will be zero, and those for the active regressors will exceed a threshold in such situations. They also show that the marginal maximum likelihood estimates (MMLE) will be close to their population counterparts almost surely. Therefore, it is likely that most of the regressors have $q_j$ close to zero, and a few regressors have $q_j$ bigger than a threshold.

We consider a simple scenario with $k_n=1$, $p_n=\exp\{n^s\}$ with $0<s<1$. 
Thus, $p_n$ highest probability models of each dimension are included in $\mathcal{M}_l$. 

{\it Case 1: } Suppose $p_n-m_n$ covariates have normalized utility $q_j \leq  c_1 n^{s_1} p_n^{-(1+\nu)}$, and the remaining $m_n$ have higher utility $q_j>(1-c_2 n^{s_2} p_n^{-\nu})$, for some constants $c_1,c_2,s_1,s_2,\nu>0$, and $m_n=O(n)$. In such cases the probability of the $m_n$-dimensional model with $m_n$ important predictors (i.e., predictors having strong marginal association with $y$) is bigger than
$$\left(1-\frac{c_2n^{s_2}}{p_n^{\nu}} \right)^{m_n} \left(1-\frac{c_1 n^{s_1}}{p_n^{1+\nu}} \right)^{p_n-m_n}\rightarrow 1,\quad \mbox{as}~n\rightarrow\infty.  $$
Further, the rate of convergence is approximately $1- \exp\{-\nu n^s\}$. This probability is greater than $1-\exp\{-n\varepsilon_n^2/4\}$ if $\nu=1$ (see conditions (i)-(iii) of Theorems \ref{thm:1}, \ref{thm:2}). However, this is only one among $p_n^2+1$ models we consider in (A2). Therefore, we expect (A2) to hold for some smaller choices of $\nu$ in this scenario.

{\it Case 2.} Next we slightly generalize the situation, where $m_n$ regressors have $q_j$ at least $1-c_1n^{s_1}p_n^{-\nu}$, ~$p_n-2m_n$ regressors have $q_j$ at most $c_2n^{s_2}p_n^{-(1+\nu)}$, and $m_n$ regressors having intermediate utilities. Let $m_n=O(n^r)$ where $r<s$. In such cases let us consider the probability, $\mathcal{P}$, of all the models having $m_n$ important regressors (i.e., regressors with $q_j>(1-c_1n^{s_1}p_n^{-\nu})$) included, and $p_n-2m_n$ unimportant regressors (i.e., regressors with $q_j<c_2n^{s_2}p_n^{-(1+\nu)}$) excluded. Note that there can be at most $2^{m_n}$ such models, and as $\binom{m_n}{l}<p_n$ for all $l=1,\ldots,m_n$, all these models are included in $\mathcal{A}_n$. Here also we get
\begin{eqnarray*}
	\mathcal{P}&\geq& \left(1-\frac{c_1 n^{s_1}}{p_n^\nu}\right)^{m_n} \left(1-\frac{c_2 n^{s_2}}{p_n^{1+\nu}} \right)^{p_n-2m_n} 
	\approx 1-\exp\{-\nu n^s\},
\end{eqnarray*}
which implies (A2) holds at least for $\nu=1$.

\paragraph{Example 2: Dense cases.} Now we consider some dense cases. Keeping the choices of $(k_n,p_n)$ as before, we consider the case where $p_n/2-m_n$ covariates have $q_j\leq c_1n^{s_1}p_n^{-(1+\nu)}$ and $p_n/2-m_n$ covariates have $q_j\geq (1-c_2 n^{s_2}p_n^{-(1+\nu)})$ for some constants $s_1,s_2, c_1, c_2,\nu>0$. The remaining $2m_n$ covariates have intermediate marginal utilities. Here we also consider $m_n=O(n^r)$ with $r<s$. As before, it is easy to see that the probability, $\mathcal{P}$, of $2^{2m_n}$ models including all the important covariates and excluding all the unimportant covariates is at least $1-\exp\{-\nu n^s\}$.

Further, observe that in each of the situations in Example 1, if we replace the marginal utility, $q_j$, of each regressor by $1-q_j$, then (A2) holds. For e.g., if $p_n-m_n$ covariates have $q_j\geq 1-c_1 n^{s_1}p_n^{-(1+\nu)}$ and the remaining $m_n$ have $q_j\leq c_2n^{s_2}p_n^{-\nu}$, a situation opposite to {\it Case 1} arises. In this case if we calculate the probability, $\mathcal{P}$, of the model with $p_n-m_n$ important covariates, it can similarly be shown that $\mathcal{P}>1-\exp\{-\nu n^s\}$.

{\bf Discussion.} It is difficult to check (A2) in situations other than extremely dense and sparse cases, as calculation of the probability $P(\{\bgm:\bgm\in\mathcal{A}_n\})$ is not trivial. However, (A2) holds if some of the covariates have sufficiently large marginal utility relative to the others. 
Situations where (A2) does not hold include the case where all the regressors have $q_j$s uniformly spread in the interval $[0,1]$. A convenient way of informally checking (A2) is to draw a histogram of the normalized utility measures. If it sufficiently deviates from uniform on $[0,1]$, then (A2) holds.

Further recall that, if $r_j$ is the marginal utility measure (e.g., correlation coefficient) of the $j^{th}$ covariate, then we define $q_j$ as the normalized value of $|r_j|^\delta$. As $\delta$ becomes larger, the distribution of $q_j$s deviates more from $\mathrm{Uniform}(0,1)$. Choosing a suitable $\delta$, we can control $q_j$s as well to satisfy (A2).

\subsubsection{Interpretation of the theorems}\label{sm:2.1.1}
In Theorems 1 and 2 we provide a rate of closeness of the true density $f_0$ and the estimated density $f$ of $y$ under the posterior distribution. These convergence results describe ``often closeness'' between $f$ and $f_0$ (see \cite{Jiang_2007}).
These results imply existence of point estimates of $f_0$ that have the convergence rate $\varepsilon_n$ in a frequentist sense. Such a point estimate can be obtained by finding the center of an $\varepsilon_n$-ball with high posterior probability, or by posterior
expectation (see \cite{GGV_2000}).

In Theorems 1 and 2, we have shown that the predictive density $f(y|\bta,\Rg,{\bf x}_i)$, with $\bta$ drawn from $\pi(\bta|\mathcal{D}^n,\Rg)$ and $\bgm\in \mathcal{A}_n$ ($\mathcal{A}_n=\cup_l\mathcal{M}_l$, as in Assumption (A2)), concentrates around the true predictive density under $f_0$ in the above sense. In order to find a point estimate for prediction, we consider the posterior mean of $\bta$, and simply average over multiple realizations of $\bgm$. To argue that the proposed point estimate is a ``good" estimate (i.e., it lies inside $\varepsilon_n$-Hellinger balls containing $f_0$), it is enough to show that each realization of $\bgm$ considered with inclusion probability $q(\bgm)$ is in $\mathcal{A}_n$, which is evident under the assumption (A2).

\subsubsection{Proof of Lemma \ref{lm:5}}\label{sm:2.1.1}
\begin{proof}[Proof of Lemma \ref{lm:5}(a)]
	By \citet[Theorem 1.8.E]{Serfling}, $\|\bfx\|^2/p_n=\sum_i x_i^2/p_n\rightarrow \sum_j E\left(x_j^2\right) /p_n$ almost surely if $cov\left(x_i^2,x_j^2\right)\leq \rho^{*}_{|i-j|}\sqrt{var\left(x_i^2\right)  var\left(x_j^2\right)}$ with  $\sum_j var\left( x_j^2\right)(\log j)^2/j^2 <\infty$, and $\sum_j \rho_j^{*}<\infty$.  Here $E(x_j^2)=1$, $var(x_j^2)=2$ for all $j$, and $cov(x_i^2,x_j^2)=2\sigma_{i,j}^2$. Therefore by (B1), $\|\bfx\|^2/p_n \rightarrow 1$ almost surely. 
	
	\paragraph{\it Proof of Lemma \ref{lm:5}(b)}  Let $Y_n=\|\bfx\|^2/p_n$. This part is proved noticing that for each $A_n =\left\{\omega:Y_n(\omega)\rightarrow 1\right\}$ implies $\left\{ \omega:Y_n(\omega)/(n\varepsilon_n^2)^{b}\rightarrow 0\right\} $ for any $b>0$ as $n\varepsilon_n^2\rightarrow\infty$. 
\end{proof}

\subsection{Proof of Theorem 2}\label{sm:2.2}
\begin{proof}
	The outline of the proof of Theorem \ref{thm:2} closely follows the arguments given in the proof of Theorem \ref{thm:1}. Therefore we only present those parts of the proof which are different. As in Theorem \ref{thm:1}, we show that $P \left( \mathcal{B}_n^c | \mathcal{A}_n \right) > 1- 2 e^{-n\varepsilon_n^2/4}$ by checking the three conditions of Lemma \ref{lm:1}. 
	
	The proof of Condition (a) is the same as for Theorem \ref{thm:1}, except for the places involving the projection matrix $\Rg$. Observe that given a dataset $\mathcal{D}^n$ and other tuning parameters we fix a particular projection matrix $\Rg$. The only property of $\Rg$ needed to prove condition (a) is $\|\Rg {\bf x} \|^2   \leq m_n p_n $ for sufficiently large $n$. To show this we use that fact that $\Rg$ is a matrix with orthonormal row vectors, and $\Rg\Rg^\prime$ has only one eigenvalue $1$ with algebraic and geometric multiplicity $m_n$. Therefore, $1$ must be an eigenvalue of $\Rg^\prime\Rg$ with algebraic multiplicity at least $m_n$. As the later matrix has only $m_n$ non-zero eigenvalues, this implies that highest eigenvalue of $\Rg^\prime\Rg$ is $1$. Thus, $\|\Rg {\bf x} \| \leq \| {\bf x}_{\bgm} \|  \leq \sqrt{p_n} $.
	
	Therefore the choice of $\epsilon$ required to ensure $d(f_u,f_v)\leq \varepsilon_n$ is  
	
	{\centering
		$\epsilon= \varepsilon_n^2 /\left\{ \sqrt{m_n p_n} \sup_{|h|\leq   c_n \sqrt{m_n p_n}} |a^{\prime}(h)|   \sup_{|h|\leq c_n \sqrt{ m_n p_n} } \left( |b^{\prime}(h)| / |a^{\prime}(h)| \right) \right\}$,
		\par}
	
	\noindent and as before we can show that
	
	\vskip5pt
	{\centering 
		$N(\varepsilon_n,\mathcal{P}_n )    \leq c p_n^{k_n+1} \left( \displaystyle\frac{1}{\varepsilon_n^2}  D( c_n \sqrt{m_n p_n})+1 \right)^{m_n},$
		\par}
	\vskip5pt
	
	\noindent where $D(R)$ is as defined in Theorem \ref{thm:1}. By using the assumptions in Theorem \ref{thm:2} condition (a) follows.
	
	
	The proof of Condition (b) depends only on the prior assigned on $\bta$, and therefore remains the same under the settings of Theorem \ref{thm:2}.
	
	The proof of Condition (c) differs from that of Theorem \ref{thm:1} in showing $ P(| (\Rg {\bf x})^{\prime} \bta - {\bf x}^{\prime} \bbta_0|< \Delta_n ) > \exp\{ -n\varepsilon^2/4 \} $ for some constant $\Delta_n$. To see this consider a positive constant $\Delta_n$. As before, from Lemma \ref{lm:3} we have
	\vspace{-.1 in}
	\begin{eqnarray}
	P(| (\Rg {\bf x})^{\prime} \bta - {\bf x}^{\prime} \bbta_0|< \Delta_n ) 
	& \geq  E_{\bgm} \left[ \exp \left\{ - \displaystyle\frac{ ({\bf x}^{\prime} \bbta_0)^2 +\Delta_n^2 }{\sigma_{\theta}^2 \|\Rg {\bf x}\|^2 } \right\} \displaystyle\frac{2^4 \Delta^4}{\sigma_{\theta}^2 \|\Rg {\bf x}\|^2} \right] \notag \\
	& \geq  E_{\bgm} \left[ \exp \left\{ - \displaystyle\frac{ ({\bf x}^{\prime} \bbta_0)^2 +\Delta_n^2 }{\sigma_{\theta}^2 \alpha_n \| {\bf x}_{\bgm}\|^2 } \right\} \displaystyle\frac{2^4 \Delta^4}{\sigma_{\theta}^2 \| {\bf x}_{\bgm}\|^2} \right] \notag \\
	& = \displaystyle\frac{2^4 \Delta_n^4}{ ({\bf x}^{\prime} \bbta_0)^2 +\Delta_n^2 } E_{\bgm} \left\{ \displaystyle\frac{Z_{\bgm}}{p_n} \exp \left(-\frac{Z_{\bgm}}{\alpha_n p_n} \right)   \right\}, \label{eq_rp19}
	\end{eqnarray}
	where $Z_{\bgm}=\left\{ ({\bf x}^{\prime} \bbta_0)^2 +\Delta_n^2 \right\}/ \left\{ \sigma_{\theta}^2 \| {\bf x}_{\bgm} \|^2 / p_n \right\}$, and $\alpha_n$ is as in (A3). From part (b) of Lemma \ref{lm:2}, and continuous mapping theorem $Z_{\bgm}-z_n \xrightarrow{p} 0$ in $\bgm$ where $z_n= \left\{ ({\bf x}^{\prime} \bbta_0)^2 +\Delta_n^2 \right\}/\left( \sigma_{\theta}^2 c\ad \right)$ $> \Delta_n^2 / \left(\sigma_{\theta}^2 c \ad \right)$.
	For some positive random variable $Z$ and non-random positive numbers $p$, $a$ and $b$, as before we can show that
	\begin{eqnarray}
	E\left(\frac{Z}{p} \exp \left\{ -\frac{Z}{\alpha p} \right\} \right) &\geq& 
	a P\left(\frac{ap}{b} < Z< - \alpha p\log (ab)\right). \label{eq_rp20}
	\end{eqnarray}
	Replacing $Z$ by $Z_{\bgm}$, $p$ by $p_n$, $\alpha$ by $\alpha_n$ and taking $a=\Delta_n^2 \exp \{-n\varepsilon_n^2/3 \}/ (\sigma_{\theta}^2 c \ad)$, and $b=p_n \exp \{-n\varepsilon_n^2/3 \}$ we get
	~$-\alpha_n p_n \log (ab) = -\alpha_n p_n \log \left[ \Delta_n^2 p_n \exp \{-2n\varepsilon_n^2/3 \} / ( \sigma_{\theta}^2 c \ad ) \right]
	\sim 2 p_n \log \left( \Delta_n^2 p_n / ( \sigma_{\theta}^2 c \ad ) \right) /3 >  p_n /2
	$
	~ for sufficiently large $n$ and $ap/b= \Delta_n^2/ \left( \sigma_{\theta}^2 c \ad \right). $
	Therefore the expression in (\ref{eq_rp20}) is greater than 
	
	\vskip5pt
	\hspace{1.75 in}$\displaystyle\frac{\Delta_n^2}{\sigma_{\theta}^2 c \ad} e^{-n\varepsilon_n^2/3 }  P\left(\frac{\Delta_n^2}{\sigma_{\theta}^2 c \ad}  \leq Z_{\bgm} \leq \frac{p_n}{2}  \right).  $
	
	\vskip10pt
	\noindent 	Note that $({\bf x}^{\prime} \bbta_0)^2 <\sum_{j=1}^{p_n} |\beta_{0,j}|<K$, and the probability involved in the above expression can be shown to be bigger than some positive constant $p$ for sufficiently large $n$. Using these facts along with equation (\ref{eq_rp19}), we have $ P(| (\Rg {\bf x})^{\prime} \bta - {\bf x}^{\prime} \bbta_0|< \Delta_n ) > \exp\{ -n\varepsilon_n^2/4 \} $. Choosing $\Delta_n =\varepsilon_n^2/(4M)$ condition (c) follows. 
\end{proof}

\subsection{Proof of Theorem \ref{thm:4}}\label{sm:2.3}
\begin{proof}
	As in the proof of Theorem \ref{thm:3}, we will only prove the condition (c) of Lemma \ref{lm:1}. 
	The proof of condition (c) closely follows that of Theorem \ref{thm:3}. Here also we write $d_{t=1}(f,f_0)=E_{\bf x} \left[ \left\{ (\Rg {\bf x})^{\prime} \bta - {\bf x}^{\prime} \bbta_0 \right\} g\left(u^{*} \right)  \right]$, split it into 2 parts as in equation (\ref{eqn_e4}) of the paper, and argue that in order to prove (c) it is sufficient to show equation (\ref{eqn_e2}) of the paper.

	The first part of equation (\ref{eqn_e2}) is essentially same as the proof of part (c) in Theorem \ref{thm:2}. The only place require attention is the proof of claim that the following expression is no less than $\exp\left\{ -n\varepsilon_n^2/4 + 2 \log 2\right\}$,
	\begin{equation}
	\displaystyle\frac{\Delta_n^2}{\sigma_{\theta}^2 c \ad} e^{-n\varepsilon_n^2/3 }  P\left(\frac{\Delta_n^2}{\sigma_{\theta}^2 c \ad}  \leq Z_{\bgm} \leq -\alpha_n p_n \log \left( \frac{\Delta_n^2 p_n}{ \sigma_{\theta}^2 c \ad} e^{-2n\varepsilon_n^2/3 }    \right) \right).\label{eqn_e6}
	\end{equation}
	where $Z_{\bgm}-z_n \rightarrow 0$ where $ z_n= \left\{ ({\bf x}^{\prime} \bbta_0)^2 +\Delta_n^2 \right\}/\left( \sigma_{\theta}^2 c\ad \right)$ in probability in $\bgm$. The right hand side within the above probability is bigger than $\alpha_n p_n n \varepsilon_n^2/2 \geq p_n \left(n \varepsilon_n^2\right)^b/2$ for some $b>0$ by assumption (A3$^\prime$). 
	Note further that $({\bf x}^{\prime} \bbta_0)^2/\left\{p_n (n\varepsilon_n^2)^b\right\} <\|\bfx\|^2 \|\bbta_0\|^2/\left\{p_n (n\varepsilon_n^2)^b\right\} \rightarrow 0$ almost surely in $\bfx$ by Lemma \ref{lm:5}(b). Therefore $\Delta_n^2/(\sigma_{\theta}^2 c\ad)<z_n< p_n \left(n \varepsilon_n^2\right)^b/2$, almost surely in $\bfx$, 	
	and the probability involved in (\ref{eqn_e6}) is bigger than some positive constant $p$ for sufficiently large $n$. Using these facts and choosing $\Delta_n$ as in the proof of Theorem \ref{thm:2}, we can show that the expression in (\ref{eqn_e6}) is bigger than $\exp\left\{ -n\varepsilon_n^2/4 + 2 \log 2\right\}$.  This completes the first part of (\ref{eqn_e2}).
	
	We prove the second part of (\ref{eqn_e2}) in the same manner as in Theorem \ref{thm:3}.
	Consider the same set $D_n$, such that $\pi\left(\left. (\bta,\bgm) \in D_n \right| \mathcal{A}_n\right) \geq \exp\{-n\varepsilon_n^2/4 +2\log 2 \}$, i.e., we consider any $\bgm\in \cup_l \mathcal{M}_l$ (see assumption (A2)) and any $\bta: \|\bta\|\leq \sigma_\theta \sqrt{3n} \varepsilon_n/\sqrt{2}$. 
	
	Next note that, the quantity  $\left\{(\Rg \bfx)^{\prime}\bta-\bfx^\prime\bbta_0\right\}\leq \left\|\Rg \bfx \right\|\|\bta\|+|\bfx^\prime\bbta_0| \leq \left(\| \bta\|+K \right)\|\bfx\|$, as $\Rg$ is row-orthogonal. Therefore, we consider 
	
	{\centering
		$\max\left\{\|\Rg \bfx \| \|\bta\|, \left| \bfx^{\prime} \bbta_0 \right|, \left|(\Rg \bfx)^{\prime}\bta-\bfx^\prime\bbta_0\right| \right\}\leq c\sqrt{n\varepsilon_n^2}  \|\bfx\|$, \par}
	
	\noindent for a suitable constant $c>0$.
	Further by assumption (B3), $|g(u)|\leq \exp\{c_0 u\}$ for some fixed $c_0>0$. Thus,
	
	{\centering
		$ E_\bfx \left[ \left. \left\{(\Rg \bfx)^{\prime}\bta-\bfx^\prime\bbta_0\right\} g\left(u^{*}\right) \right| \| \bfx \|> \sqrt{3} p_n \right] \leq E_\bfx  \left[ \left. \exp \left\{c\sqrt{n\varepsilon_n^2} \|\bfx\| \right\}  \right| \| \bfx \|> \sqrt{3}p_n \right]$,\par}
	
	\noindent for a suitable constant $c>0$. Finally as in Theorem \ref{thm:3} we observe that
	\begin{eqnarray*}
		E_\bfx  \left[ \left. \exp \left\{c\sqrt{n\varepsilon_n^2} \|\bfx\| \right\}  \right| \| \bfx \|> \sqrt{3}p_n \right]
		\hspace{3.5 in}	\\
		\leq \exp\left[ -\frac{3 c_1 p_n^2}{c_0 l_n}\left\{\left(1- \frac{c~l_n\sqrt{n\varepsilon_n^2}}{\sqrt{3}p_n} \right)^2+\frac{c~l_n^2 n\varepsilon_n^2 }{p_n^2}+ \frac{cl_n}{p_n}\log\left(\frac{c_0l_n}{c_2}\right)+\frac{c}{p_n}\log(p_n)\right\}  \right]\\
		\hspace{2 in}\times P\left(\left\| \bfx\right\|>\sqrt{3}p_n | \bfx\sim N({\bf 0},c_0c_2^{-1}l_n \right).
	\end{eqnarray*}
	Noting that $\max\left\{ l_n\log(l_n),l_n\sqrt{n\varepsilon_n^2} m_n\right\}=o(p_n)$ we have
	
	{\centering 
		$E_{\bf x} \left[ \left\{ (\Rg {\bf x})^{\prime} \bta - {\bf x}^{\prime} \bbta_0 \right\} g\left(u^{*} \right)|A_{p_n}^c  \right] P\left(A_{p_n}^c\right) \leq \exp\{-cp_n\}\leq \varepsilon_n^2/20.$
		\par}
	
	\noindent	This proves the second part of (\ref{eqn_e2}), and the following the same procedure as in Theorem \ref{thm:3} the proof is completed. 
\end{proof}

\section{An Addition to Section \ref{sec:4} of the Paper}\label{sm:3}

\subsection{Details of specifications of the tuning parameters of the competitors}\label{sm:3.3}
SCAD and MCP are calculated using three packages, viz., \emph{SIS}, \emph{ncvreg} and \emph{ncpen} \citep{ncpen}. For one-step SCAD (1-SCAD) we use a R-code provided by the authors of \cite{FXZ2014}. LASSO, ridge and elastic net (EN) are calculated using two packages, \emph{glmnet} and \emph{biglasso} \citep{biglasso}. In each case, the best among all the results is provided. As we are interested in prediction problem, AIC tuning parameter selector is chosen (see \cite{ZLT2010}).  For EN the parameter $\alpha$ is set to $0.5$. The tuning parameter $\lambda$ should converge to $0$ at a certain rate. \cite{FXZ2014} has chosen $\lambda=\sqrt{p_n/n}$ for practical purpose. We consider $200$ equidistant points in the range $[0.0005,2]$ for $\lambda$. Additionally,  we have considered the default data-adaptive range of $\lambda$ provided in the respective packages for all the penalization methods. The best result (calibrated in terms of average MSPE) among these two is provided.

SPCR and RPCR are performed using \emph{PMA} and \emph{rsvd} packages in R, respectively. To estimate PC scores, we rely on approximate SVD using \emph{fast.svd} in the \emph{corpcor} package. For BCR, we average over $100$ different random projection matrices with varying $m_n$ values within the range $[2\log p_n,3n/4]$. We use the \emph{qr} function in R to apply QR factorization in place of Gram-Schmidt orthogonalization of the random matrix, which is computationally prohibitive for large $p_n$. For BASAD we use \emph{basad} R-package with the default settings which includes choosing the initial estimate for number of active covariates by Gibbs sampling, and number of burn-in and estimation iterations as $1000$ each. For SSLASSO we use \emph{SSLASSO} R-package with $\lambda_1=1$ and $\lambda_0$ chosen from the interval $[1,100]$ with grid increments of 1.

\paragraph{Methods used to find prediction intervals (PIs) of the competing methods}
For TARP and BCR, PIs are obtained from quantiles of the posterior predictive distribution of ${\bf y}_{new}$ given $\mathcal{D}^n, X_{new}$. 
For normal-linear model the $100(1-\alpha)\%$ PI of the frequentist methods can be obtained as
$$\hat{y}_{new}\pm t_{\alpha/2,n-k-1}\sqrt{\hat{\sigma}^2\left(1+{\bf x}_{new}^{\prime}(X_{[k]}^{\prime}X_{[k]})^{-1}{\bf x}_{new} \right)},  $$
where $\hat{\sigma}^2=\sum_{i=1}^{n} (y_i-\hat{y}_i)^2/(n-k-1)$, $k$ is the number of predictors used to predict $y$, $X_{[k]}$ is the design matrix with $k$ selected predictors. 
We used this formula for all other methods except the penalized likelihood methods. The penalization methods are subject to over-fitting, and consequently the resulting $\hat{\sigma}$ can be very close to zero. Some possible solutions to this problem are provided in \cite{conformal2018}, \cite{leave_one_out_PI}. The former takes an approach of conformal prediction, while the later takes a simple approach of estimating PI based on leave-one-out residuals. It considers the inter-quartile range of the leave-one-out residuals as the
$50\%$ PI. We consider the later approach as calculation of the conformal prediction interval (provided in \emph{conformalInference} R-package) takes prohibitive time for higher values of $p_n$.

\subsection{Additional simulation results for higher values of $p_n$}\label{sm:3.4}
Here we present the performance of the different methods with respect to mean square prediction error (MSPE), and empirical coverage probability (ECP) and the width of $50\%$ prediction interval (PI) for larger choices of $p_n$ in each scheme considered in Section \ref{sec:4} of the paper.
The relative performance of the methods in terms of MSPE for different simulation schemes are shown in Figures \ref{fig-s1}-\ref{fig-s6}, and that in terms of ECP and width of $50\%$ PI are presented in Table \ref{tab-s2}.

\afterpage{
	\begin{figure}
		\centering     
		\subfigure[{\it Scheme I}, $p_n=3\times 10^3$.]{\label{fig-s1}  \includegraphics[height=2.5 in, width=3 in]{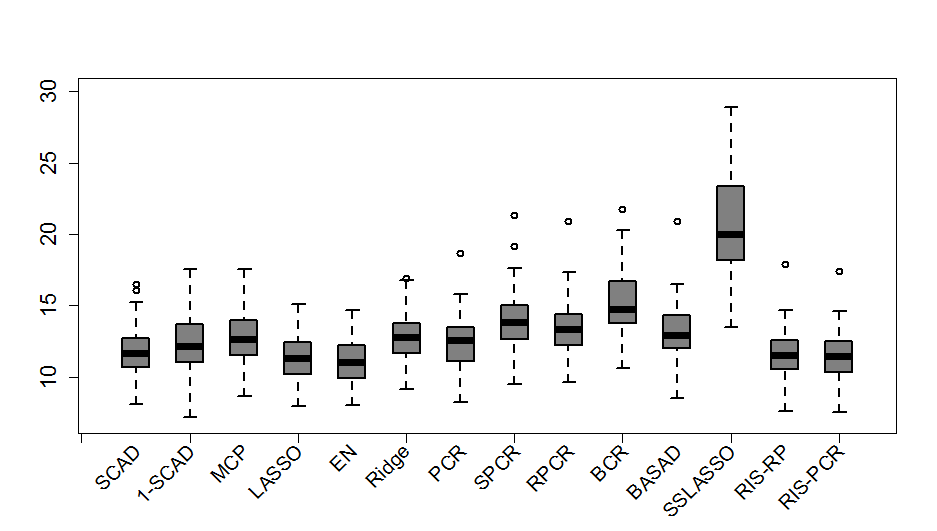}}
		\subfigure[{\it Scheme II}, $p_n=2\times 10^4$.]{\label{fig-s2}  \includegraphics[height=2.5 in, width=3.1 in]{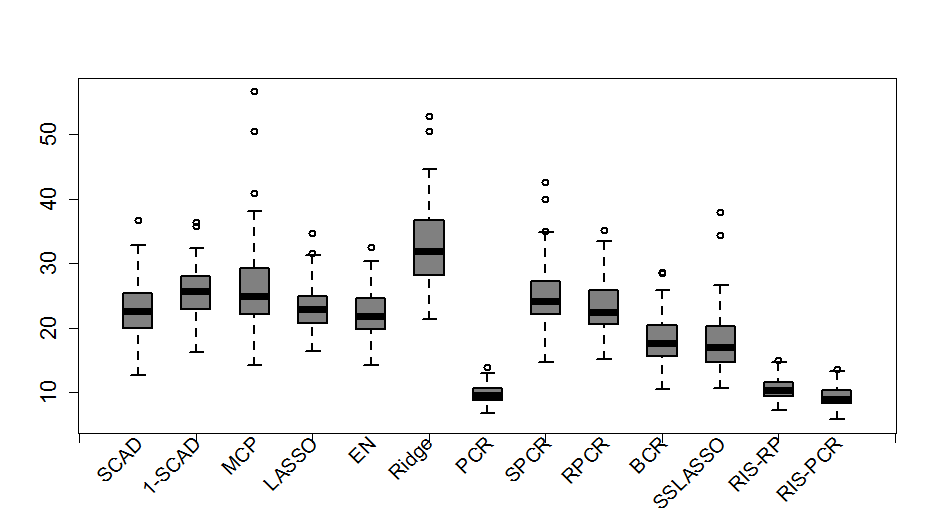}}
		\caption{Box-plot of MSPEs for the competing methods in Schemes I and II. } 
	\end{figure}

	\begin{figure}
		\centering     
		\subfigure[MSPE of all the methods.]{\label{fig-s3}  \includegraphics[height=2 in, width=3.1 in]{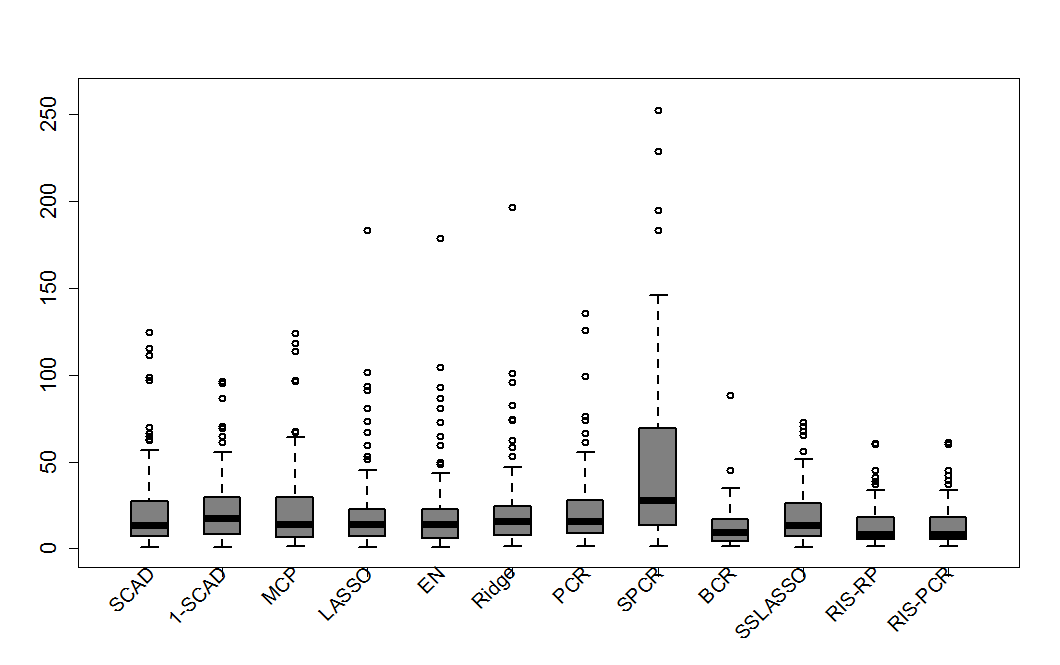}}
		\subfigure[MSPE of selected methods.]{\label{fig-s4}  \includegraphics[height=2 in, width=3.1 in]{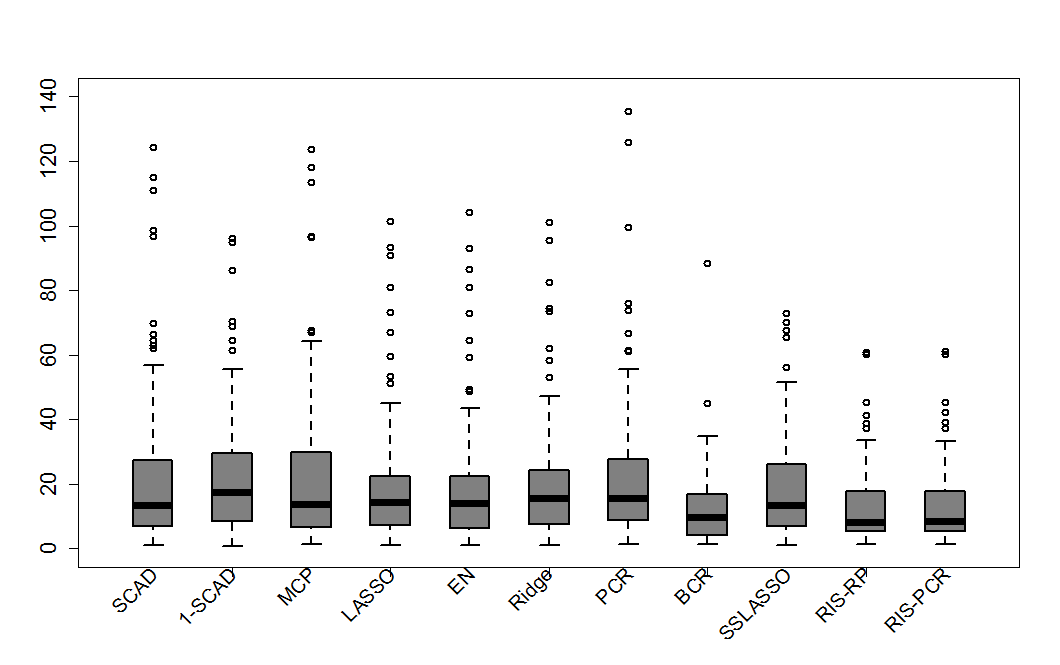}}
		\caption{Box-plot of MSPEs for $p_n=10^4$ in \emph{Scheme III}. } 
	\end{figure}
	
	\begin{figure}
		\centering     
		\subfigure[MSPE of all the methods.]{\label{fig-s5}  \includegraphics[height=2.5 in, width=3.1 in]{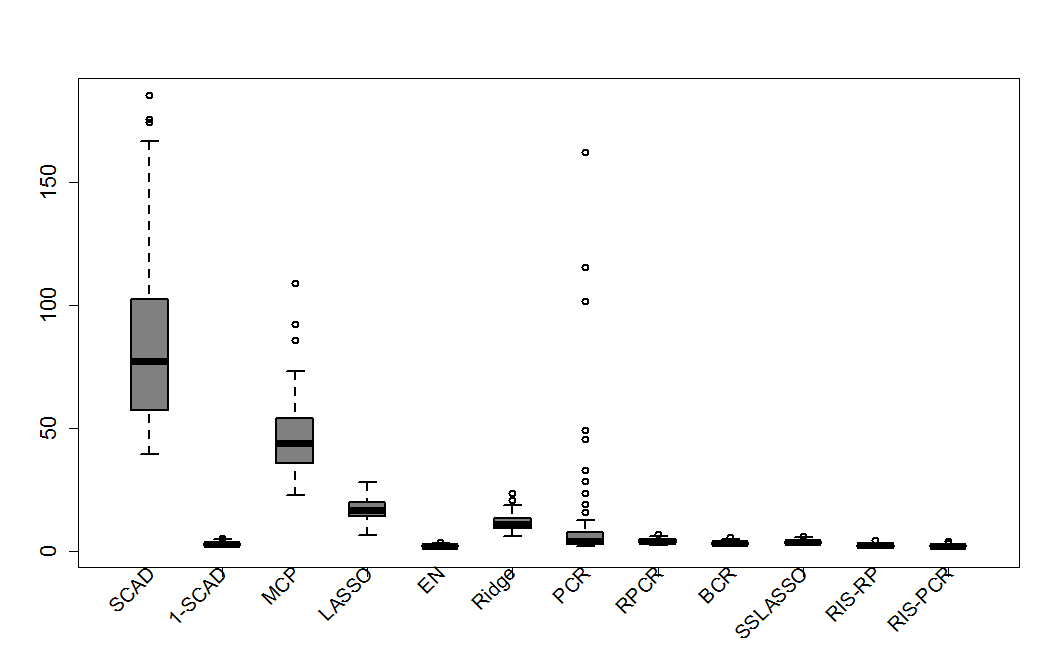}}
		\subfigure[MSPE of selected methods.]{\label{fig-s6}  \includegraphics[height=2.5 in, width=3.1 in]{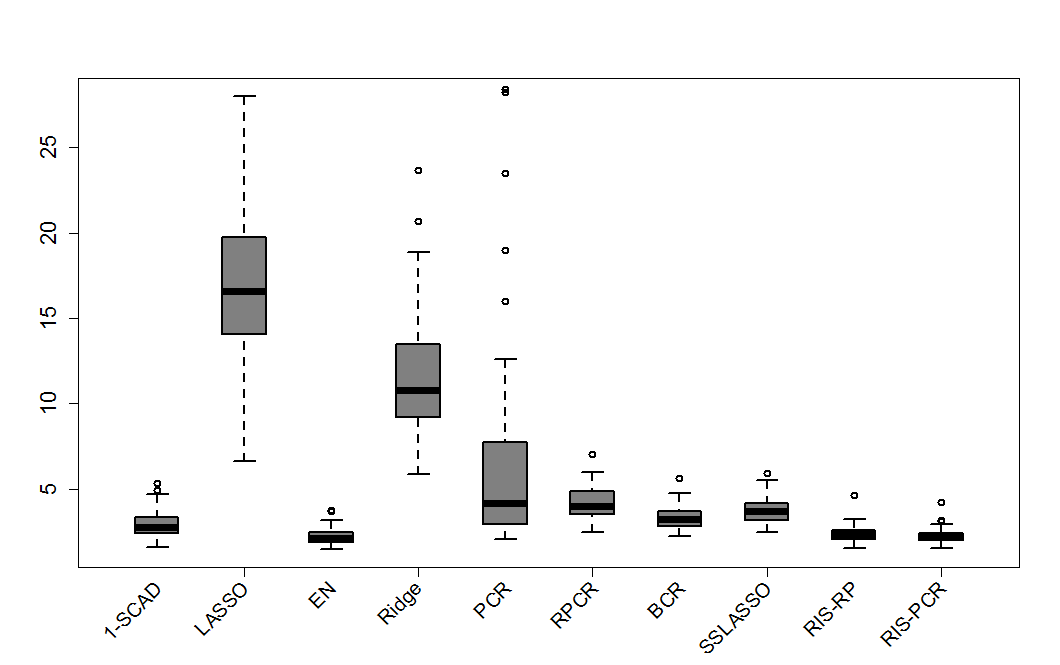}}
		\caption{Box-plot of MSPEs for $p_n=2\times 10^4$ in \emph{Scheme IV}. } 
\end{figure}}

\afterpage{
	\begin{table*}[!h]
		\renewcommand{\arraystretch}{1.2}
		\begin{center}
			{\scriptsize
				\begin{threeparttable}
					\caption{Mean and standard deviation (in bracket) of empirical coverage probabilities (ECP) and width of 50\% prediction intervals for 13 competing methods.}
					\label{tab-s2}
					\begin{tabular}{|p{1.5 cm}|p{1.4 cm} p{1.4 cm} | p{1.4 cm} p{1.4 cm} | p{1.4 cm} p{1.4 cm}| p{1.4 cm} p{1.7 cm} |}  \hline
						Schemes $\rightarrow$  &  \multicolumn{2}{|c|}{\it I} & \multicolumn{2}{|c|}{\it II} & \multicolumn{2}{|c|}{\it III} & \multicolumn{2}{|c|}{\it IV} \\ 
						$(n,p_n)$ &  \multicolumn{2}{|c|}{$(200,2\times 10^3)$} & \multicolumn{2}{|c|}{$(200,10^4)$} &  \multicolumn{2}{|c|}{$(200,5\times 10^3)$} &  \multicolumn{2}{|c|}{$(200,10^4)$}\\ 
						Methods $\downarrow$ &  ECP  &  Width &  ECP & Width &  ECP  & Width &  ECP & Width  \\ \hline
						{\it SCAD}    & 49.8   (6.0) & 4.58  (0.42) & 50.2  (8.3) & 6.51  (0.88) & 11.3   (10.1) & 1.37   (0.08) & 48.1   (6.8) & 11.74    (2.30) \\
						{\it 1-SCAD} & 49.4 (11.0) & 4.77 (1.07) & 46.3 (9.6) & 6.29 (1.36) & 10.8 (10.6) & 1.42 (0.21) & 49.2 (9.6) & 2.23 (0.42) \\ 
						{\it MCP}     & 51.7   (6.7) & 4.92  (0.62) & 49.1   (9.9) & 6.84  (1.22) & 11.7   (10.8) & 1.39   (0.19) & 50.9   (6.7) & 9.2  ~~ (1.24) \\
						{\it LASSO}   & 47.9   (6.9) & 4.80  (0.50) & 47.4   (8.5) & 6.36  (1.05) &  10.6   (11.0) & 1.38   (0.12) & 47.3   (6.7) & 5.09    ~(0.60) \\
						{\it EN}      & 48.9   (6.5) & 4.47  (0.39) & 48.0   (9.0) & 6.31  (0.95) &  10.4   (11.0) & 1.38   (0.12) & 58.2   (7.8) & 2.40    ~(0.33) \\
						{\it Ridge}   & 47.2   (6.2) & 4.65  (0.42) & 48.6   (7.3) & 7.55  (0.92) &  10.3   (9.9)  & 1.38   (0.12) & 52.8   (7.5) & 4.89   ~(0.66) \\
						{\it PCR}     & 41.3   (28.1)& 4.45  (3.70) & 37.8   (24.5)& 3.34  (2.41) & 11.0   (9.5)  & 1.37   (0.08) & 51.7   (32.1)& 4.45   ~~(3.59) \\
						{\it SPCR}    & 50.4   (5.3) & 5.03  (0.24) & 49.5   (5.4) & 6.58  (0.50) &  9.4   (7.5)  & 1.38   (0.08) & 49.8   (5.5) & 49.05 (13.68) \\
						{\it RPCR}    & 50.2   (5.5) & 4.91  (0.22) & 47.2   (5.2) & 5.98  (0.41) & \tnote{**} &     **           & 49.0   (5.3) & 2.67   ~~(0.28) \\
						{\it BCR}     & 52.3   (5.5) & 5.45  (0.26) & 54.6   (6.1) & 6.25  (0.41) & 26.5   (13.2) & 2.42   (0.69) & 44.8   (5.4) & 2.16   ~~(0.14) \\
						{\it BASAD}   & 50.3   (5.4) & 4.95  (0.22) &  \tnote{*}   &    *         & *    & * &      *       & * \\
						{\it SSLASSO} & 18.7   (3.9) & 2.15  (0.12) & 22.0   (4.4) & 2.28  (0.13) & 11.1   (9.3)  & 1.38   (0.09) & 33.3   (4.0) & 1.64   ~~(0.10) \\
						{\it RIS-RP}  & 39.3   (4.9) & 3.53  (0.15) & 47.0   (5.4) & 4.03  (0.12) & 67.6   (18.6) & 8.16   (4.60) & 64.5   (5.1) & 2.84   ~~(0.18) \\
						{\it RIS-PCR} & 28.4   (5.0) & 2.50  (0.22) & 29.4   (4.8) & 2.26  (0.16) & 21.9   (8.7)  & 2.22   (0.85) & 39.6   (4.5) & 1.55   ~~(0.12) \\
						\hline
					\end{tabular}
					\begin{tablenotes}
						\item [*] BASAD requires prohibitive computational time for $p_n=10^4$, and hence removed from comparison
						\item [**] RPCR produces extremely high MSPE and width of PI for {\it Scheme III}, and hence removed from comparison
					\end{tablenotes} 
			\end{threeparttable}}
		\end{center}
	\end{table*}
}

\subsubsection{Summary of comparative performance of the competing methods in schemes I-IV.}\label{sm:3.5}  All the methods except SSLASSO yield reasonable performance in terms of MSPE in {\it Scheme I} (see Figures \ref{fig1} and \ref{fig-s1}). Further SSLASSO has the lowest coverage among all the methods in {\it Scheme I}. Among the other methods, the best overall performance in terms of MSPE are by EN, RIS-PCR and RIS-RP. However, TARP has lower empirical coverage than most of the methods. All the other methods have nearly comparable performance in terms of MSPE and ECP.
Finally PCR has average ECP and width comparable to RIS-RP. However, it has the highest variance of both ECP and width reflecting lack of stability. 1-SCAD also have high variance of ECP and width in {\it Scheme I}.

In {\it Scheme II} the best overall performance is by PCR and RIS-PCR (see Figures \ref{fig2} and \ref{fig-s2}). However, RIS-PCR has lower average ECP than most of the methods, and PCR also has low coverage and highest variance of ECP compared to all the methods. RIS-RP closely follows the former two methods in terms of MSPE, and also yield higher coverage probability. 
SSLASSO and BCR have somewhat better results than others in terms of MSPE. However, like in {\it Scheme I}, here also SSLASSO has the lowest ECP among all the competing methods. BCR has the highest coverage among all the competing methods. Average MSPE of all the penalization methods, SPCR and RPCR are on the higher side, and among these methods Ridge and MCP have the worst performance in terms of MSPE. MCP and Ridge fail to perform well in terms of MSPE in \emph{Scheme II}. 
Finally, we skip BASAD as it requires prohibitive computational time for $p_n\sim 10^4$.  

In {\it Scheme III} (see Figures \ref{fig3}-\ref{fig4} and \ref{fig-s3}-\ref{fig-s4}), RPCR has unrealistically high MSPE, therefore we skip it from comparison. BASAD shows the worst performance in terms of MSPE among the other methods. SPCR also has larger, occasionally extremely high,  MSPE values compared to other methods. Among the other methods the box-plot of MSPE is most stable for $3$ methods, viz., RIS-RP, RIS-PCR and BCR. All the other methods show similar performance in terms of MSPE. All of them have some large outlying MSPE values indicating lack of robustness in the presence of outliers. In terms of ECP as well, all the methods except RIS-RP have low coverage probabilities. BCR and RIS-PCR also have better ECP and width than others. All other methods have comparable average ECP and width, although BASAD has the highest width of PI among all.

In {\it Scheme IV} (see Figures \ref{fig5}-\ref{fig6} and \ref{fig-s5}-\ref{fig-s6}), SPCR has the worst overall performance. SCAD and MCP also show poor performance in terms of MSPE compared to others. Among the other methods LASSO has the worst performance in terms of MSPE, followed by Ridge and PCR. MSPE of PCR frequently becomes extremely large indicating instability. The other methods have comparable performance in terms of MSPE, although TARP and EN have the best MSPE results. In terms of ECP, all the methods except SSLASSO has comparable ECP. However, RIS-RP has highest average ECP, and RIS-PCR has average ECP nearly 40\%. As in {\it Scheme I}, PCR has highest variance of ECP in {\it Scheme IV} as well, followed by 1-SCAD. Except SPCR, SCAD and MCP, all the other methods have reasonable average width of 50\% PI.


\subsection{Summary of performance of the methods with respect to computational time}\label{sm:3.6}
When $p_n$ is below $10^4$ all the methods except BASAD require comparable computational time (see Figures \ref{fig11} and \ref{fig12}). BASAD takes more than an hour for $p_n=10^4$ and the code crashes  $p_n\geq 5\times 10^4$. 
Computation of SSLASSO also takes prohibitive time when $p_n=5\times 10^5$, and it is much higher than all other methods (except BASAD) for $p_n\geq 5\times 10^4$. 1-SCAD takes much longer time among the other methods. The computational time is more than 15 minutes for $p_n=10^5$, and more than an hour for $p_n=5\times 10^5$ for 1-SCAD. SCAD, BCR, RIS-PCR and EN require comparable computational time up to $p_n=10^5$. For $p_n=5\times 10^5$, EN requires highest time, followed by RIS-PCR. SPCR, LASSO and RIS-RP require comparable time throughout, RIS-RP requiring maximum time and LASSO requiring minimum time to compute among these three. The remaining 4 methods, MCP, Ridge, PCR and RPCR have best overall performance. Among these methods, Ridge takes highest time to compute, less than 1.5 minutes for $5\times 10^5$.  

\begin{figure}
	\centering     
	\includegraphics[scale=.5]{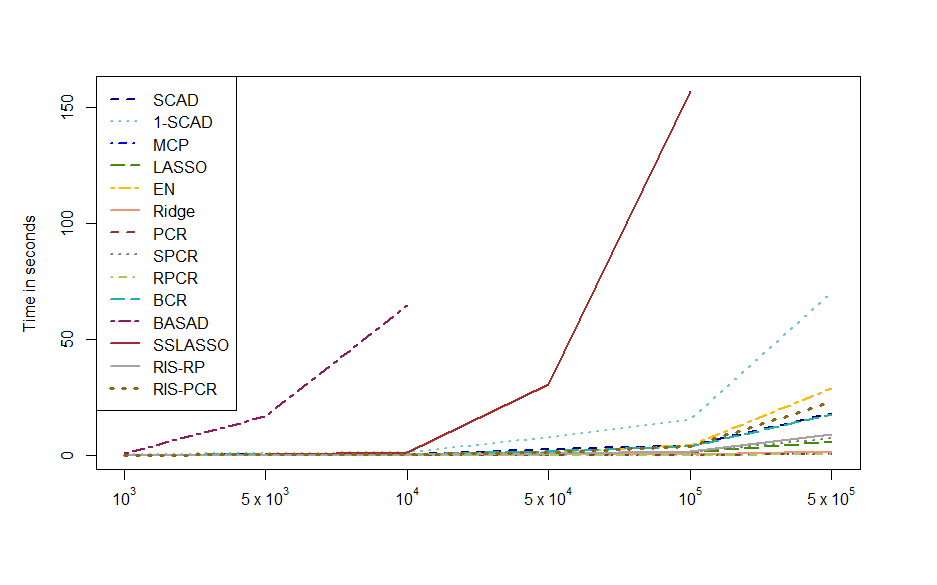}
	\caption{Time required by different methods to predict $y$ as $p_n$ grows. }
	\label{fig12}
\end{figure}
The increment of computational time of RIS-PCR is due to the computation of exact SVD of the screened design matrix $X_{\bgm}$. However, this can be reduced if one uses some approximation of the SVD.

\section{An Addition to the Section \ref{sec:5} of the Paper}\label{sm:4}
\subsection{Analysis of GTEx and Eye datasets}\label{sm:4.0}
\paragraph{GTex Dataset} To understand the functional consequences of genetic variation, \cite{gtex} presented an analysis of RNA sequencing data from 1641 samples across 43 tissues from 175 individuals, generated as part of the pilot phase of the Genotype-Tissue Expression (GTEx) project.
We selected RNA-seq data on two normal tissues, viz., Artery-Aorta and Artery-Tibial. The dataset contains RNA-seq expressions on 36115 $(=p_n)$ genes and 556 $(=n)$ samples, among which 224 are from Artery-Aorta, and 332 are from Artery-Tibial. A training set of $100$ samples from each of the tissue types is considered, and the remaining $446$ samples are used as test set. Table \ref{tab:sm2} provides the average and standard deviation (sd) of percentages of misclassification, and those for the area under the ROC curve over 100 random subsets of the same size for the competing methods.
\afterpage{
	\begin{table*}[!h]
		\renewcommand{\arraystretch}{1.5}
		\begin{center}
			{\scriptsize
				\caption{Mean and standard deviation (in braces) of percentage of misclassification and area under the ROC curve for \emph{GTEx} dataset.}
				\label{tab:sm2}
				\begin{tabular}{|p{2 cm}|p{.7 cm} p{.7 cm}   p{.7 cm}  p{0.7 cm} p{0.7 cm} p{0.7 cm} p{0.7 cm} p{0.7 cm} p{0.7 cm} p{0.7 cm} p{0.7 cm} p{0.7 cm}|}  \hline
					Methods & \quad SCAD & 1-SCAD & \quad MCP & \quad LASSO & \quad EN & \quad Ridge & \quad PCR &\qquad SPCR & \qquad RPCR & \quad BCR &  RIS-RP & RIS-PCR  \\ \hline
					Misclassification Rate (in \%)  
					& 0.00 (0.00) & 34.83 (0.00)  & 0.00 (0.00) & 58.81 (22.39) & 00.42 (0.10) & 0.32 (0.19) & 0.07 (0.14) & 3.26 (3.03) & 0.16 (0.19) & 13.59 (2.00) &  0.42 (0.18) & 0.50 (0.32)  \\
					\hline
					Area under ROC curve  
					& 1.000 (.000) & 0.500 (.000) &  1.000 (.000) & 0.570 (.168) & 0.999 (.001) & 0.998 (.001) & 0.999 (.001) & 0.966 (.033) & 0.999 (.001) & 0.877 (.041)  & 1.000 (.000) & 0.996 (.002)  \\ \hline
			\end{tabular}}
		\end{center}
\end{table*}}

\paragraph{Eye Dataset}
The Eye dataset consists of gene expressions for $200$ ($=p_n$) gene probes from the microarray experiments of mammalian-eye tissue samples of 120 ($=n$) rats (see \cite{eyedata}). The response variable is the expression level of the TRIM32 gene. 
We consider $100$ sample points as the training set, and the remaining $20$ samples as the test set.
The eye dataset has continuous response, and therefore we evaluate the methods by MSPE and empirical coverage probabilities (ECP) of $50\%$ prediction intervals (PI) as in Section \ref{sec:4}.  As variation in the expression levels of the TRIM32 gene is very small (the range is 1.37), we multiply the MSPEs of different methods by $10$ to increase the variability. Table \ref{tab8} provides the mean and sd of MSPEs, ECPs of $50\%$ PIs, and widths of the PIs over 100 different training and test sets selected from the dataset, for the competing methods.

\afterpage{
	\begin{table*}[!h]
		\renewcommand{\arraystretch}{1.5}
		\begin{center}
			{\scriptsize {\centering
					\caption{Mean and standard deviation (in braces) of mean square prediction errors, empirical coverage probabilities and widths of $50\%$ prediction interval of 13 competing methods in Eye dataset.}
					\label{tab8}
					\begin{tabular}{|p{.9 cm}|p{.65 cm} p{.65 cm}  p{.65 cm}  p{0.65 cm} p{0.65 cm} p{0.65 cm} p{0.65 cm} p{0.65 cm} p{0.65 cm} p{0.65 cm} p{0.7 cm} p{0.65 cm} p{0.65 cm} p{0.65 cm}|}  \hline
						Methods & \quad SCAD & 1-SCAD & \quad MCP & \quad LASSO & \quad EN & \quad Ridge & \quad PCR &\qquad SPCR & \qquad RPCR & \quad BCR &\quad BASAD &  SS-LASSO & RIS-RP & RIS-PCR  \\ \hline
						MSPE  & 11.66 (4.07) & 18.76 (17.15) & 11.66 (4.06) & 8.89 (3.37) & 8.90 (3.12) & 10.21 (2.70) & 13.84 (3.94) & 8.65 (3.08) & 7.67 (3.30) & 10.01 (4.04) & 68.09 (26.42) & 11.52 (7.81) & 8.54 (3.09) &  8.29 (2.99)  \\\hline
						ECP  & 0.502 (.138) & 0.459 (.132) & 0.502 (.138) & 0.574 (.144) & 0.573 (.122) & 0.537 (.115) &  0.423 (.325) & 0.508 (.123) & 0.522 (.114) & 0.564 (.117) & 0.541 (.148) & 0.467 (.143) &  0.598 (.101) & 0.507 (.107)  \\\hline
						Width & 1.208 (.057) &  1.340 (0.018)  & 1.208 (.057) & 3.05 (11.524) & 1.41 (.189) & 1.489 (.184) & 1.884 (1.61) & 1.202 (.079) & 1.055 (.049)  & 1.249 (.056) & 1.600 (.288) & 1.069 (.042) & 1.341 (.038) &  1.056 (.036) \\\hline
					\end{tabular}\par} }
		\end{center}
\end{table*}}


\paragraph{Results:}
For the Golub data set, both the lowest misclassification rate and the highest area under ROC curve are achieved by RIS-RP, which is closely followed by RIS-PCR. TARP based methods attain lower sd than other methods as well. PCR and Ridge also yield reasonable performance with average misclassification rate lower than $9\%$ and area under ROC more than $0.9$. RPCR, LASSO, EN, SCAD and MCP produce average misclassification rates of at least $10\%$,  with area under the ROC about $0.9$. BCR possesses high misclassification rate (about $19\%$), although area under ROC is more than $0.8$. Finally, either the MSPE, nor the area under ROC curve, is satisfactory for SPCR and 1-SCAD.  

For the GTEx dataset, perfect classification is achieved by SCAD and MCP. These methods along with RIS-RP also have the highest area under the ROC curve. PCR, RPCR, EN, Ridge, RIS-RP and RIS-PCR also yield satisfactory results, having less than $0.5\%$ average misclassification rate and more than $99\%$ area under the ROC curve.  SPCR yield reasonable performance with an average MSPE of less than $4\%$. BCR attains $13.3\%$ average misclassification rate, with the area under the ROC curve almost $0.9$.  Finally LASSO and 1-SCAD fail to show any discriminatory power with average MSPE more than $34\%$.  

RPCR, RIS-PCR, RIS-RP, SPCR, LASSO and EN yield excellent performance in terms of MSPE in the eye data with an average MSPE of less than 1 (see Table \ref{tab8}). All of these methods show stable performance in terms of ECP. However, LASSO has much higher width than all other methods, with exceptionally large variance. BCR, Ridge, SSLASSO, SCAD and MCP have similar overall performance. In terms of MSPE, BCR outperforms the other three methods. PCR and 1-SCAD are not quite as good in terms of either measures. 1-SCAD also show high variability in MSPE results. 
Finally, performance of BASAD is worst in terms of MSPE, although it yields comparable results with respect to the other measures.   

\subsection{Predictive calibration in binary response datasets}\label{sm:4.2}
Apart from measuring the misclassification rates and the area under ROC curve, we validate TARP in terms of it's ability to quantify uncertainly in real datasets with binary responses. To this end, we partition the interval $[0,1]$ into ten equal sub-intervals, viz., $[0,0.1), ~[0.1,0.2)$ and so on, and classify the test data points $({\bf x},y)_{i,new}$ to the $k^{th}$ class if predictive probability of $y_{i,new}$ falls in that class. Next, we consider the squared difference of the empirical proportion of $y_{i,new}=1$ among the data points classified in a given interval with the middle point of the interval, and consider the mean of these squared differences (\emph{MSD}) of all the intervals. If a method is well calibrated, then the \emph{MSD} would be small. The following table (Table \ref{tab-s7}) shows means and standard deviations of \emph{MSD}s of the competing methods for Golub and GTEx datasets.

\afterpage{
	\begin{table*}[!h]
		\renewcommand{\arraystretch}{1.5}
		\begin{center}
			{\scriptsize
				\caption{Mean and standard deviation (in braces) of mean square differences (\emph{MSD}) of empirical and predictive probabilities.}
				\label{tab-s7}
				\begin{tabular}{|p{1.3 cm}|p{.7 cm} p{.7 cm} p{.7 cm}  p{0.7 cm} p{0.7 cm} p{0.7 cm} p{0.7 cm} p{0.7 cm} p{0.7 cm} p{0.7 cm} p{0.7 cm} p{0.7 cm}|}  \hline
					Methods$\rightarrow$ \quad Dataset $\downarrow$ & \quad SACD & 1-SCAD & \quad MCP & \quad LASSO & \quad EN & \quad Ridge & \quad PCR &\qquad SPCR & \qquad RPCR & \quad BCR &  RIS-RP & RIS-PCR  \\ \hline
					Golub  & 4.454 (.000) &  4.454 (.000) &   4.469 (.032) & 4.454 (.000)  & 4.454 (.000) & 4.454 (.000) & 2.589 (.012) & 3.429 (.073)  & 2.587 (.017) & 2.886 (.032)  &  2.611 (.045) & 2.555 (.044) \\
					GTEx & 2.784 (.000) & 5.000 (.000) &  2.784 (.000) & 4.652 (.000) & 4.652 (.000) & 4.652 (.000) & 2.784 (.000) & 3.216 (.102) & 2.784 (.000) & 2.873 (.007) &  2.782 (.001) & 2.783 (.001)  \\
					
					\hline
				\end{tabular}
			}
		\end{center}
	\end{table*}
}

Table \ref{tab-s7} indicates that
TARP based methods, PCR, RPCR and BCR perform relatively well compared to the others in both the datasets. Among these methods, RIS-PCR and RIS-RP have lowest MSD for Golub and GTEx data, respectively. SCAD and MCP has lower MSD in GTEx dataset, but they fail to perform well in Golub dataset. SPCR is deficient in terms of MSD for both the datasets. Finally LASSO, Ridge, EN and 1-SCAD have worst performance in terms of MSD among all the methods.


\subsection{ The GEUVADIS cis-eQTL dataset}\label{sm:4.3} ~ We conclude this section by illustrating the TARP approach on a massive dataset.
The GEUVADIS cis-eQTL dataset \citep{massive_p} is publicly available at \url{http://www.ebi.ac.uk/Tools/geuvadis-das/}. This dataset
consists of messenger RNA and microRNA on lymphoblastoid cell line (LCL) samples from 462 individuals provided by the $1000$ Genomes Project along with roughly $38$ million SNPs. E2F2 plays a key role in the control of the cell cycle. Hence, as in \cite{chen2017}, we choose the gene E2F2 (Ensemble ID: ENSG00000000003) as the response. A total of $8.2$ million $(=p_n)$ SNPs are preselected as candidate predictors on the basis of having at least $30$ non-zero expressions. The total number of subjects included in the dataset is about $450$ $(=n)$. The genotype of each SNP is coded as $0$, $1$ or $2$ corresponding to the number of copies of the minor allele.

TARP is applied on this dataset. We consider four different training sample sizes, viz., $n_t=200, 250$, $300$ and $350$, and test sample size $100$ in each case. As $p_n$ is huge, we applied three different values of $\delta$, namely, $2,5$ and $8$, to analyze the effect of a conservative screening. The recommended choice of $\delta$ lies within $(5,6)$ when $p_n=8.2\times 10^6$ and $n\in[200,400]$. To perform SVD for RIS-PCR, we use \emph{fast.svd} instead of the usual \emph{svd} to cope with the massive number of regressors. Table \ref{tab3} provides the MSPE, the ECP of $50\%$ PI and width of the PI, obtained by two different variants of TARP. 
\afterpage{
	\begin{table*}[!h]
		\renewcommand{\arraystretch}{1.2}
		{\scriptsize
			\caption{MSPE, ECP and width of PI (in order) obtained by RIS-RP and RIS-PCR for three values of $\delta$ and different training sample sizes $(n_t)$.}
			\label{tab3}
			\begin{center}
				\begin{tabular}{|p{1 cm}|p{2.7 cm}  p{2.7 cm}  p{2.7 cm}| }  \hline
					& \multicolumn{3}{|c|}{RIS-RP}\\
					& $\delta=2$   & $\delta=5$ &  $\delta=8$  \\ 
					$n_t $ &MSPE~ECP~ Width & MSPE~ECP~ Width & MSPE~ECP~ Width \\ \hline
					$200$ 	& 0.800  0.39  1.059 & 0.872   0.42  0.983 & 0.855   0.34  0.928 \\
					$250$ & 0.852  0.39  1.102 & 0.920   0.42  1.023 & 0.921   0.35  1.013 \\
					$300$	& 0.860  0.36  1.126 & 0.855   0.44  1.075 &  0.866   0.36  1.069 \\
					$350$	& 0.778  0.45  1.210 & 0.779   0.48  1.221 & 0.829   0.46  1.219 \\\hline
					\hline
					& \multicolumn{3}{|c|}{RIS-PCR}\\
					& $\delta=2$   & $\delta=5$ &  $\delta=8$  \\ 
					$n_t $ &MSPE~ECP~ Width & MSPE~ECP~ Width & MSPE~ECP~ Width  \\\hline
					200& 0.834   0.06   0.177 & 0.838   0.12   0.192 & 0.831   0.10  0.252 \\
					250 &  0.858   0.14   0.355 & 0.882   0.12   0.289 & 0.896   0.19  0.420 \\
					300 & 0.845   0.14   0.399 & 0.867   0.20   0.511 & 0.865   0.20  0.487 \\
					350 &  0.757   0.35   0.893 & 0.786   0.36   0.886 & 0.826   0.41  0.984\\ 
					\hline
				\end{tabular}
		\end{center}}
\end{table*}}

{\it Results:} The MSPEs of RIS-RP and RIS-PCR are comparable for all the choices on $n$. However, RIS-RP yields much better empirical coverage probabilities than RIS-PCR, especially when $n\leq 300$. The three choices of $\delta$ yield comparable results in terms of all the measures in general. For RIS-RP, $\delta=5$ results in higher ECP and for RIS-PCR higher ECP is obtained using $\delta=8$. Moreover, the choice $\delta=8$ makes both the procedures much faster compared to other choices of $\delta$. When the training sample is $350$, $\delta=2,5$ and $8$ select about $290800, 12600$ and $7960$ variables, respectively, on an average in the screening stage out of $8.2\times 10^6$ variables. In view of the results in this massive dimensional dataset, it seems reasonable to use a higher value of $\delta$ for filtering out noisy regressors, and computational convenience.

\section{Mathematical Details}\label{sm:5}
\paragraph{Proof of Lemma \ref{lm:2}}
\begin{proof}[Proof of part a.]
	Consider the conditional expectation and variance of $\|\Rg {\bf x}\|^2$ given $(\bgm,{\bf x})$ as follows: 	
	\quad $E\left( \|\Rg {\bf x}\|^2|\bgm \right)= m_n\|{\bf x}_{\gamma}\|^2$
	\vspace{-.2 in}
	\begin{eqnarray*}
		var\left( \|\Rg {\bf x}\|^2|\bgm \right) &=&    m_n \|{\bf x}_{\gamma}\|^4 \left[ 1+\left\{ (2\psi)^{-1}-2\right\} \sum_{j=1}^{\pg} x_{\gamma,j}^4 / \|{\bf x_{\gamma}}\|^4  \right],
	\end{eqnarray*}
	where ${\bf x}_{\bgm}$ includes the regressors $j$ for which $\gamma_j=1$. The details is given in the proof of Result \ref{res1} below.
	Next consider the conditional expectation of $\|\Rg {\bf x}\|^2$ given ${\bf x}$ is given by
	\begin{eqnarray}
	E_{\bgm}   E\left( \|\Rg {\bf x}\|^2|\bgm \right)=m_n E_{\bgm} \left(\sum_j x_j^2 I(\gamma_j=1) \right)=c ~m_n \sum_j x_j^2 |r_{{\bf x}_j,\yn}|^{\delta}, \label{eq_rp4}
	\end{eqnarray}
	where $c>0$ is the proportionality constant. 
	Also the conditional variance of $\|\Rg {\bf x}\|^2$ given ${\bf x}$ is given by
	$	var_{\bgm} \left\{ E\left( \|\Rg {\bf x}\|^2|\bgm \right) \right\} + E_{\bgm} \left\{  var\left( \|\Rg {\bf x}\|^2|\bgm \right)\right\}.$
	Considering both the terms of the above expression separately we get
	\begin{eqnarray}
	var_{\bgm} \left\{ E\left( \|\Rg {\bf x}\|^2|\bgm \right) \right\} &=& var_{\bgm} \left( m_n \sum_j x_j^2 I(\gamma_j=1)\right) \notag \\
	&=& c~ m_n^2 \sum_j x_j^4 |r_{{\bf x}_j,\yn}|^{\delta} \left( 1 - c|r_{{\bf x}_j,\yn}|^{\delta} \right) \leq c ~ m_n^2 p_n, \label{eq_rp6}
	\end{eqnarray}
	as given ${\bf x}$, $\gamma_j$s are independent, each $|x_j|\leq 1$, and $q_j= c|r_j|^{\delta} <1$. Again
	\begin{eqnarray}
	E_{\bgm} \left\{  var\left( \|\Rg {\bf x}\|^2|\bgm \right)\right\} &= E_{\bgm} \left[m_n \|{\bf x}_{\gamma}\|^4 \left\{1+\displaystyle\left(\frac{1}{2\psi}-2 \right) \frac{\sum_{j=1}^{\pg} x_{\gamma,j}^4}{ \|{\bf x_{\gamma}}\|^4} \right\} \right] \notag \\
	&\leq   c~m_n E_{\bgm} \left[  \|{\bf x}_{\gamma}\|^4    \right] \leq c~m_n E_{\bgm} \left[  \|{\bf x}\|^4    \right] \leq c ~m_n ~p_n^2 
	\label{eq_rp7}
	\end{eqnarray}
	for some constant $c$, as $\sum_{j=1}^{\pg} x_{\gamma,j}^4 < \|{\bf x_{\gamma}}\|^4$.

	Therefore, from (\ref{eq_rp4}), (\ref{eq_rp6}) and (\ref{eq_rp7}) it can be shown that the expectation of $ \|\Rg {\bf x}\|^2 /(m_n p_n)$ converges to $c\ad$, and variance of the same converges to $0$, as $p_n\rightarrow\infty$ and $m_n \rightarrow\infty$.
	
	\vskip5pt
	\noindent	{\it Proof of part b.}
	Observing that $E_{\bgm} \left( \|{\bf x}_{\bgm}\|^2 \right)=c   \sum_j x_j^2 |r_{{\bf x}_j,\yn}|^{\delta}$ and 
	$$ var_{\bgm} \left( \|{\bf x}_{\bgm}\|^2  \right) = c \sum_j x_j^4 |r_{{\bf x}_j,\yn}|^{\delta} \left( 1 - c|r_{{\bf x}_j,\yn}|^{\delta} \right) \leq p_n.$$
	Therefore it can be shown that the expectation of $ \| {\bf x}_{\bgm}\|^2 /p_n$ converges to the limit $c\ad$, and variance of the same converges to $0$.
\end{proof}

\paragraph{Proof of Lemma \ref{lm:4}}
\begin{proof}[Proof of the statement of Lemma \ref{lm:2} (a).]
	Recall Result \ref{res1}. Under assumption (A1) we have 
	
	{\centering
		$ \displaystyle\frac{1}{m_n p_n}E_{\bgm} E\left(\left\| \Rg {\bf x}\right\| \right) \rightarrow \alpha_\delta,$
		\par}
	\vskip5pt
	
	\noindent given ${\bf x}$ for $\alpha_\delta$ as in (A1). 
	To see that the variance $var\left(\| \Rg {\bf x} \|\right) =o(m_n^2 p_n^2)$, observe that 
	\begin{equation}
	var_{\bgm} \left\{ E\left( \|\Rg {\bf x}\|^2|\bgm \right) \right\} 
	= c~ m_n^2 \sum_j x_j^4 |r_{{\bf x}_j,\yn}|^{\delta} \left( 1 - c|r_{{\bf x}_j,\yn}|^{\delta} \right) = o\left(m_n^2 p_n^2 \right), \label{eqn_e5}
	\end{equation} 
	almost surely.
	To verify the last statement note that by \citet[Theorem 1.8.E]{Serfling}, $\sum_i x_i^4/p_n\rightarrow \sum_j E\left(x_j^4\right) /p_n$ almost surely if $cov\left(x_i^4,x_j^4\right)\leq \rho^{*}_{|i-j|}\sqrt{var\left(x_i^4\right)  var\left(x_j^4\right)}$ with $\sum_j \rho_j^{*}<\infty$, and $\sum_j var\left( x_j^4\right)(\log j)^2/j^2 <\infty$. Here $E\left(x_j^4\right) =3$, $var \left(x_j^4\right)=96$ for all $j$, and $cov\left(x_i^4,x_j^4\right) =24\sigma_{i,j}^2 \left(\sigma_{i,j}^2+3 \right)$. Thus it is easy to see that strong law of large numbers (SLLN) holds for $\sum_j x_j^4/p_n$ by assumption (B1), and therefore (\ref{eqn_e5}) holds almost surely.

	Similarly,~
	$E_{\bgm} \left\{  var\left( \|\Rg {\bf x}\|^2|\bgm \right)\right\}\leq   c~m_n E_{\bgm} \left[  \|{\bf x}_{\gamma}\|^4    \right] \leq c~m_n  \|{\bf x}\|^4   \leq o\left(m_n^2 p_n^2\right) $ 
	almost surely. To prove the last statement, we argue as before that $\|\bfx\|^2/p_n \rightarrow 1$ almost surely. Here $E(x_j^2)=1$, $var(x_j^2)=2$ for all $j$, and $cov(x_i^2,x_j^2)=2\sigma_{i,j}^2$. Therefore by (B1) SLLN holds for $\|\bfx\|^2/p_n$, and therefore $\|\bfx\|^4/p_n^2$ is bounded almost surely. As $m_n \rightarrow\infty$ the above statement holds. Therefore the statement of Lemma \ref{lm:2}(a) holds.
	
	%
	
	\vskip10pt
	\noindent	{\it Proof of the statement of Lemma \ref{lm:2} b.}
	Observe that $E_{\bgm} \left( \|{\bf x}_{\bgm}\|^2 \right)=c   \sum_j x_j^2 |r_{{\bf x}_j,\yn}|^{\delta}$ and 
	$$ var_{\bgm} \left( \|{\bf x}_{\bgm}\|^2  \right) = c \sum_j x_j^4 |r_{{\bf x}_j,\yn}|^{\delta} \left( 1 - c|r_{{\bf x}_j,\yn}|^{\delta} \right) \leq c\sum_j x_j^4 .$$
	Thus the expectation of $ \| {\bf x}_{\bgm}\|^2 /p_n$ converges to the limit $c\ad$, and variance of the same converges to $0$ almost surely as $p_n\rightarrow\infty$. This completes the proof.
\end{proof}

\begin{result}
	\label{res1} 
	Consider a random matrix $\Rg$ which depends on another random vector $\bgm$ distributed as in (\ref{eq_rp2}). Then the conditional distribution of $\Rg$ satisfies the following:
	\begin{enumerate}[a.]
		\item $ E\left( \|\Rg {\bf x}\|^2|\bgm \right)=m_n \|{\bf x}_{\gamma}\|^2$, and
		\item    $var\left( \|\Rg {\bf x}\|^2|\bgm \right) =    m_n \|{\bf x}_{\gamma}\|^4 \left[ 1+\left\{ (2\psi)^{-1}-2\right\} \sum_{j=1}^{\pg} x_{\gamma,j}^4 / \|{\bf x_{\gamma}}\|^4  \right] .$
	\end{enumerate}
	\begin{proof}[Proof of part a.] Observe that 
		\begin{eqnarray}
		\|\Rg {\bf x}\|^2 &=& \left\| \left( \sum_j r_{1,j} \gamma_j x_j, \sum_j r_{2,j} \gamma_j x_j, \ldots , \sum_j r_{m_n,j} \gamma_j x_j \right)^{\prime} \right\|^2 \notag \\
		&=& \left( \sum_j r_{1,j} \gamma_j x_j \right)^2 + \left( \sum_j r_{2,j} \gamma_j x_j \right)^2 + \ldots + \left( \sum_j r_{m_n,j} \gamma_j x_j \right)^2. \label{eq_rp12}
		\end{eqnarray}
		Now $ E\left( \sum_j r_{i,j} \gamma_j x_j \right)^2 =E\left\{ \sum_j r_{i,j}^2 \gamma_j x_j^2 + \sum_{j\neq j^{\prime}} r_{i,j} r_{i,j^{\prime}} \gamma_j \gamma_{j^{\prime}} x_j x_{j^{\prime}} \right\}=  \sum_j \gamma_j x_j^2 = \|{\bf x}_{\gamma}\|^2, $
		as $E(r_{i,j}^2)=1$ and $E(r_{i,j} r_{i,j^{\prime}})=0$ as $i=1,2,\ldots, m_n$, $j,j^{\prime}=1,2, \ldots, p_n$, and $j \neq j^{\prime}$. 

		\noindent {\it Proof of part b.} From (\ref{eq_rp12}) we have
		\begin{eqnarray}
		var\left( \|\Rg {\bf x}\|^2|\bgm \right) &=& var \left\{ \sum_i \left( \sum_j r_{i,j} \gamma_j x_j \right)^2   \right\}  = \sum_i var \left( \sum_j r_{i,j} \gamma_j x_j \right)^2  \notag \\
		&& \hskip20pt  + \sum_{i\neq i^{\prime}} cov \left\{ \left( \sum_j r_{i,j} \gamma_j x_j \right)^2, \left( \sum_j r_{i^{\prime},j} \gamma_j x_j \right)^2\right\}.  \label{eq_rp13}  
		\end{eqnarray}
		We will consider each term of (\ref{eq_rp13}) one by one. Consider the first term. Note that 
		\begin{eqnarray}
		var \left( \sum_j r_{i,j} \gamma_j x_j \right)^2 &=& var \left\{ \sum_j r_{i,j}^2 \gamma_j x_j^2 + \sum_{j\neq k} r_{i,j} r_{i,j^{\prime}} \gamma_j \gamma_k x_j x_{j^{\prime}} \right\}  \notag \\
		&=&  var \left\{ \sum_j r_{i,j}^2 \gamma_j x_j^2 \right\} + var \left\{ \sum_{j\neq j^{\prime}} r_{i,j} r_{i,j^{\prime}} \gamma_j \gamma_k x_j x_{j^{\prime}} \right\} \notag \\
		&& \hskip25pt + cov \left\{ \sum_j r_{i,j}^2 \gamma_j x_j^2 ,  \sum_{j\neq j^{\prime}} r_{i,j} r_{i,j^{\prime}} \gamma_j \gamma_{j^{\prime}} x_j x_{j^{\prime}} \right\}. \notag
		\end{eqnarray}
		Consider the first term in (\ref{eq_rp13}).
		\begin{eqnarray*}
			var \left\{ \sum_j r_{i,j}^2 \gamma_j x_j^2 \right\} &=& \sum_j var \left( r_{i,j}^2 \gamma_j x_j^2 \right) + \sum_{j \neq j^{\prime} } cov \left( r_{i,j}^2 \gamma_j x_j^2, r_{i,j^{\prime}}^2 \gamma_{j^{\prime}} x_{j^{\prime}}^2 \right)\\
			&=& \sum_j \gamma_j x_j^4 var \left( r_{i,j}^2  \right) + \sum_{j \neq j^{\prime} } \gamma_j x_j^2 \gamma_{j^{\prime}} x_{j^{\prime}}^2 cov \left( r_{i,j}^2 , r_{i,j^{\prime}}^2  \right)\\
			&=& \sum_j \gamma_j x_j^4  \left\{ E\left( r_{i,j}^4 \right) - E^{2}  \left( r_{i,j}^2 \right)   \right\} 
			= \left( \frac{1}{2\psi}-1  \right) \sum_j \gamma_j x_j^4 ,
		\end{eqnarray*}
		as $E\left(r_{i,j}^4\right)=(2\psi)^{-1}$.
		Again,
		\begin{eqnarray*}
			var \left\{ \sum_{j\neq j^{\prime}} r_{i,j} r_{i,j^{\prime}} \gamma_j \gamma_k x_j x_{j^{\prime}} \right\}   = E\left(\sum_{j\neq j^{\prime}} r_{i,j} r_{i,j^{\prime}} \gamma_j \gamma_k x_j x_{j^{\prime}} \right)^2 
			= \sum_{j\neq j^{\prime}}  \gamma_j \gamma_k x^2_j x^2_{j^{\prime}} E \left( r_{i,j}^2 r_{i,j^{\prime}}^2 \right) \\
			+ \sum_{\substack{\text{$(j,j^{\prime})\neq (k,k^{\prime})$} \\ \text{$j\neq j^{\prime}, k \neq k^{\prime}$}}}  \gamma_j \gamma_k \gamma_{j^{\prime}} \gamma_{k^{\prime}} x^2_j x^2_{j^{\prime}} x^2_k x^2_{k^{\prime}} E \left( r_{i,j} r_{i,j^{\prime}} r_{i,k} r_{i,k^{\prime}} \right) = \sum_{j\neq j^{\prime}}  \gamma_j \gamma_k x^2_j x^2_{j^{\prime}}
		\end{eqnarray*}
		as the other term will be zero. Next
		\begin{eqnarray*}
			cov \left\{ \sum_j r_{i,j}^2 \gamma_j x_j^2 ,  \sum_{j\neq j^{\prime}} r_{i,j} r_{i,j^{\prime}} \gamma_j \gamma_{j^{\prime}} x_j x_{j^{\prime}} \right\} = \sum_j \sum_{k\neq k^{\prime}}  \gamma_j x_j^2 ,    \gamma_k \gamma_{k^{\prime}} x_k x_{k^{\prime}} cov\left(r_{i,j}^2 ,r_{i,k} r_{i,k^{\prime}}\right)=0.
		\end{eqnarray*}
		Therefore the first term in (\ref{eq_rp13}) is
		\begin{eqnarray}
		\sum_i var \left( \sum_j r_{i,j} \gamma_j x_j \right)^2 
		=  \left( \frac{1}{2\psi} -2 \right) \sum_j \gamma_j x_j^4 +  \left(\sum_{j}  \gamma_j  x^2_j \right)^2. \label{eq_rp15}
		\end{eqnarray}
		
		The last term in (\ref{eq_rp13}),
		$cov \left\{ \left( \sum_j r_{i,j} \gamma_j x_j \right)^2, \left( \sum_j r_{i^{\prime},j} \gamma_j x_j \right)^2\right\}=0$.
		This is because the $\left( \sum_j r_{i,j} \gamma_j x_j \right)^2$ depends on the $i^{th}$ row of the random matrix $R$ for a fixed $i$, and  $\left( \sum_j r_{i^{\prime},j} \gamma_j x_j \right)^2$ depends on a fixed $i^{\prime}\neq i$. Therefore these two terms are independent, hence uncorrelated. 
		Combining the above result and (\ref{eq_rp15}) the proof follows.
	\end{proof}

\end{result}

\begin{small}
\bibliographystyle{apalike}
 \bibliography{rp}
\end{small}
 \end{document}